\theoremstyle{definition}\newtheorem{df}{Definition}[section]}
\theoremstyle{definition}\newtheorem{rem}[df]{Remark}}
\theoremstyle{definition}}
\theoremstyle{definition}\newtheorem{const}[df]{Construction}}
\theoremstyle{definition}\newtheorem{notation}[df]{Notation}}
\newtheorem{prop}[df]{Proposition}
\newtheorem{lem}[df]{Lemma}
\newtheorem{thm}[df]{Theorem}
\newtheorem{prob}[df]{Problem}
\newtheorem{cor}[df]{Corollary}
\newcommand{\bbZ}{\mathbb{Z}}
\newcommand{\bbR}{\mathbb{R}}
\newcommand{\bbC}{\mathbb{C}}
\newcommand{\Aut}{\mathrm{Aut}}
\newcommand{\id}{\mathrm{id}}
\newcommand{\minor}[1]{\vert#1\vert}
\newcommand{\calM}{\mathcal{M}}
\newcommand{\calI}{\mathcal{I}}
\newcommand{\calJ}{\mathcal{J}}
\newcommand{\calZ}{\mathcal{Z}}
\newcommand{\calC}{\mathcal{C}}
\newcommand{\calB}{\mathcal{B}}
\newcommand{\calF}{\mathcal{F}}
\newcommand{\calA}{\mathcal{A}}
\newcommand{\co}{\colon\thinspace}
\newcommand{\relint}{\mathrm{relint}}
\newcommand{\ccp}[1]{\bbC P^{#1}\sharp\bbC P^{#1}}
\newcommand{\cp}[1]{\bbC P^{#1}}
\newcommand{\depth}{\mathrm{depth}}
\newcommand{\sgn}{\mathrm{sgn}}
\newcommand{\pr}{\mathrm{pr}}
\newcommand{\calK}{\mathcal{K}}
\newcommand{\frD}{\mathfrak{D}}
\newcommand{\frF}{\mathfrak{F}}
\title{On the cohomology equivalences between bundle-type quasitoric manifolds over a cube}
\author{Sho Hasui}
\date{}
\address{Department of Mathematics, Faculty of Science, Kyoto University, Sakyo-ku, Kyoto 606-8502, Japan}
\email{s.hasui@math.kyoto-u.ac.jp}
\begin{document}

\maketitle

\begin{abstract}
The aim of this article is to establish the notion of bundle-type quasitoric manifolds and provide two classification results on them:
(1) ($\ccp{2}$)-bundle type quasitoric manifolds are weakly equivariantly homeomorphic if their cohomology rings are isomorphic, and (2) quasitoric manifolds over $I^3$ are homeomorphic if their cohomology rings are isomorphic.
In the latter case, there are only four quasitoric manifolds up to weakly equivariant homeomorphism which are not bundle-type.
\end{abstract}

\section{Introduction}

A quasitoric manifold $M$ over a simple polytope $P$, which was introduced by Davis and Januszkiewicz \cite{DJ91},
is a $2n$-dimensional smooth manifold with a locally standard $T^n=(S^1)^n$-action for which the orbit space is identified with $P$.
%
%
%
Quasitoric manifolds are defined as a topological counterpart of toric varieties.
Actually, as the toric varieties are in one-to-one correspondence with the fans, the quasitoric manifolds over $P$ are in one-to-one correspondence with a kind of combinatorial objects, called characteristic maps on $P$.
Moreover, any smooth projective toric variety turns out to be a quasitoric manifold if we regard that $T^n$ acts on it through the inclusion to $(\bbC^{\times})^n$.

On the classification of quasitoric manifolds, Masuda posed the following \textit{cohomological rigidity problem for quasitoric manifolds} in \cite{M08} where he affirmatively solved the equivariant version of it.

\begin{prob}
Are two quasitoric manifolds homeomorphic if their cohomology rings are isomorphic as graded rings?
\end{prob}

Since then toric topologists have studied the topological classification of quasitoric manifolds from the viewpoint of cohomological rigidity,
and now we have some classification results which give partial affirmative answers for this problem.
First, the cohomological rigidity of quasitoric manifolds over the simplex $\Delta^n\,(n=1,2,\ldots)$ is shown in \cite{DJ91}.
Second, the cohomological rigidity of quasitoric manifolds over the convex $m$-gon ($m=4,5,\ldots$) is an immediate corollary of the classification theorem of Orlik and Raymond \cite{OR70}.
Third, over the product of two simplices, the cohomological rigidity is proved by Choi, Park, and Suh \cite{CPS12}.
Finally, over the dual cyclic polytope $C^n(m)^*$ ($n\geq 4$ or $m-n=3$), it is shown by the author \cite{H15}.
In addition, there are some results on the cohomological rigidity of Bott manifolds, a special subclass of quasitoric manifolds over cubes, by Choi, Masuda, and Suh \cite{CMS}, Choi \cite{Choi} and Choi, Masuda, and Murai \cite{CMM}.
On the other hand, we have found no counterexample to this problem.

In this article we mainly consider the cohomological rigidity of ``bundle-type'' quasitoric manifolds over the cube $I^n$, which we give the precise definition later.
Bundle-type quasitoric manifolds form a large subclass of quasitoric manifolds.
For instance, up to weakly equivariant homeomorphism, the equivariant connected sum $\ccp{2}$ is the only one quasitoric manifold over $I^2$ which is not bundle-type (Proposition \ref{only ccp{2}} and Remark \ref{rem:kappa^2}), and there are only four quasitoric manifolds over $I^3$ which are not bundle-type (Lemma \ref{lem:summary up to w.e.homeo.}).
Note that there are infinitely many quasitoric manifolds over $I^n$ ($n\geq 2$) up to weakly equivariant homeomorphism.

The goal of this article is to show the following two theorems.
Here a ($\mathbb{C}P^2\sharp\mathbb{C}P^2$)-bundle type quasitoric manifold means an iterated ($\mathbb{C}P^2\sharp\mathbb{C}P^2$)-bundle over a point equipped with a good torus action, of which the precise definition is given in Section 2.2.

\begin{thm}\label{main thm 1}
Suppose that there is a graded ring isomorphism $\varphi\co H^*(M';\bbZ)\to H^*(M;\bbZ)$ between the cohomology rings of two $(\ccp{2})$-bundle type quasitoric manifolds $M$ and $M'$.
Then there exists a weakly equivariant homeomorphism $f\co M\to M'$ which induces $\varphi$ in cohomology.
\end{thm}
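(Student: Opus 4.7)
The plan is to proceed by induction on the height $k$ of the bundle tower expressing $M$ as $M_k \to M_{k-1} \to \cdots \to M_0 = \mathrm{pt}$, with each $M_j \to M_{j-1}$ a $(\ccp{2})$-bundle, and analogously for $M'$. The overall strategy is to show that the isomorphism $\varphi$ respects the tower filtration after a suitable reindexing, and then to invoke the equivariant cohomological rigidity of quasitoric manifolds due to Masuda \cite{M08}, which reduces the production of a weakly equivariant homeomorphism to that of a compatible isomorphism of characteristic data. The base case $k=1$ amounts to the rigidity of $\ccp{2}$ itself, whose weakly equivariant homeomorphism type is determined by its cohomology ring.

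For the inductive step I would first record an explicit presentation of $H^*(M;\bbZ)$. Each stage $M_j \to M_{j-1}$ contributes, via the Leray--Hirsch theorem, two degree-two generators $a_j, b_j \in H^2(M_j;\bbZ)$ pulled back from the characteristic submanifolds of the fiber $\ccp{2}$, together with two quadratic relations expressing $a_j b_j$ and $a_j^2 - b_j^2$ as classes pulled back from $H^*(M_{j-1};\bbZ)$ that encode the twisting data of the $j$-th stage. Thus $H^*(M;\bbZ)$ is a quotient of $\bbZ[a_1,b_1,\ldots,a_k,b_k]$ by a triangular system of quadratic relations, and $H^*(M';\bbZ)$ admits an analogous presentation with primed generators.

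The heart of the argument is to prove that $\varphi$ preserves the tower filtration up to a relabelling of the stages. For this I would characterize intrinsically, inside $H^*(M;\bbZ)$, the subring $\pi^*H^*(M_{k-1};\bbZ)$ coming from the last projection $\pi\co M_k\to M_{k-1}$: it ought to coincide with the subring generated by those degree-two classes that are not ``top-stage,'' which can be detected via a depth function on $H^2$ or through the structure of the cup-product quadratic form $H^2\otimes H^2\to H^4$ and its distinguished isotropic pair coming from each $\ccp{2}$ fiber. Once this subring is isolated on both sides, $\varphi$ descends to an isomorphism $\bar\varphi\co H^*(M'_{k-1};\bbZ)\to H^*(M_{k-1};\bbZ)$ to which the induction hypothesis applies, and the residual plane spanned by $a_k,b_k$ is determined up to the obvious ambiguities (swap of the two fiber characteristic submanifolds and sign changes), which match exactly the ambiguities in the characteristic data of the top stage.

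The main obstacle will be this intrinsic characterization of the filtration. Unlike the Bott tower setting \cite{CMS}, where each stage contributes a single degree-two generator satisfying a simple quadratic relation, here each stage contributes a two-dimensional summand in $H^2$ with no preferred basis, and the quadratic relations are not of the ``square equals linear times itself'' shape. Teasing the successive stages out of the global ring structure will require a careful study of the cup-product pairing, its isotropic lines, and how they interact with multiplication by lower-stage classes. Once the inductive filtration is in place, assembling the weakly equivariant homeomorphism should be routine: the stage-by-stage identification of characteristic data translates into a $T$-equivariant identification of the successive bundles, and Masuda's equivariant rigidity \cite{M08} then delivers the desired homeomorphism $f\co M\to M'$ inducing $\varphi$.
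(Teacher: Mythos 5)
Your high-level architecture --- normalize both manifolds to towers with fiber $\ccp{2}$, show that $\varphi$ respects the tower filtration after a relabelling of stages, and induct on the number of stages --- is exactly the strategy of the paper (Lemma \ref{lem:calK}, Lemma \ref{lem:filtration lemma 2-2}, Corollaries \ref{1-I} and \ref{1-II}). But the proposal has a genuine gap precisely at the point you yourself flag as ``the main obstacle'': the intrinsic characterization of $\pi^*H^*(M_{k-1};\bbZ)$ inside $H^*(M_k;\bbZ)$. This is not a soft consequence of the cup-product quadratic form and its isotropic structure; it is the entire technical content of the theorem. (Note in particular that $H^2(\kappa^2)$ contains no nonzero class of square zero, since $(sx+ty)^2=(s^2+t^2)x^2$, so isotropy or ``depth'' invariants do not separate the stages.) The paper's Lemma \ref{lem:sublemma, filtration lemma 2-2} proves the needed statement by writing $\varphi(X_1)=\sum a_iX_i$, $\varphi(X_2)=\sum b_iX_i$ for the top-stage generators, extracting from $\varphi(X_1(X_1+2X_2))\equiv 0$ and $\varphi(X_2(X_1+X_2))\equiv 0$ a system of quadratic Diophantine identities in the $a_i,b_i$ and the twisting data $s_i,t_i$, and running an iterated divisibility argument. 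Moreover the conclusion is sharper than ``the filtration is preserved'': the dichotomy is that either the top-stage generators map into the top-stage plane \emph{and the twisting of that stage vanishes} (so the stage splits off as a product factor and can be permuted into place, Lemma \ref{lem:permutation}), or their images avoid the top-stage plane entirely. Without this dichotomy your relabelling step has no justification.

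Two further points. First, even after the filtration is arranged and each diagonal $2\times2$ block is normalized to the identity --- which requires knowing that every graded ring automorphism of $H^*(\kappa^2)$ is realized by a weakly equivariant self-homeomorphism (Lemmas \ref{Aut(kappa^2)} and \ref{realization of Aut(kappa^2)}) --- you must still show that the remaining unipotent upper-triangular isomorphism forces the off-diagonal blocks to vanish and the two characteristic squares to coincide; this is another explicit computation (Lemma \ref{lemma for 1-II} and Corollary \ref{1-II}), not a formality. Second, the concluding appeal to Masuda's equivariant rigidity \cite{M08} is not available: that theorem takes an isomorphism of \emph{equivariant} cohomology as input, whereas you only have an ordinary one, and upgrading the latter to the former is exactly what is at stake in cohomological rigidity. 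The paper instead builds the weakly equivariant homeomorphism directly from the matched characteristic data via Proposition \ref{realization of rep} and controls its effect on cohomology via Corollary \ref{induced by rep}.
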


\begin{thm}\label{main thm 2}
Suppose that there is a graded ring isomorphism $\varphi\co H^*(M';\bbZ)\to H^*(M;\bbZ)$ between the cohomology rings of two quasitoric manifolds $M$ and $M'$ over $I^3$.
Then there exists a homeomorphism $f\co M\to M'$ which induces $\varphi$ in cohomology.
\end{thm}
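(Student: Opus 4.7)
My plan is to reduce to a finite case analysis using Lemma~\ref{lem:summary up to w.e.homeo.}, which lists the quasitoric manifolds over $I^3$ up to weakly equivariant homeomorphism as families of bundle-type manifolds together with exactly four exceptional non-bundle-type ones. Since weakly equivariant homeomorphism implies homeomorphism, it suffices to verify that any two representatives from this list with isomorphic cohomology rings are homeomorphic. This breaks naturally into three regimes: both bundle-type, both exceptional, or one of each.

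For the bundle-type regime, I would partition the manifolds over $I^3$ according to the splitting $I^3 = I \times I^2$. Because a quasitoric manifold over $I^2$ is either a Hirzebruch surface or $\ccp{2}$, the bundle-type manifolds over $I^3$ fall into three (overlapping) classes: height-three Bott manifolds, $\cp{1}$-bundles over $\ccp{2}$, and $\ccp{2}$-bundles over $\cp{1}$. For the last class, Theorem~\ref{main thm 1} already produces a weakly equivariant homeomorphism from any cohomology isomorphism. For the height-three Bott manifolds, cohomological rigidity is supplied by the results of \cite{CMS}, \cite{Choi} and \cite{CMM}. The remaining $\cp{1}$-bundles over $\ccp{2}$ should succumb to a direct argument, since the Euler class of a $\cp{1}$-bundle is recoverable from the ring structure up to standard sign ambiguities. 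Cross-class identifications are then handled by ring invariants: the existence of a square-zero element $x \in H^2$ whose annihilator has a prescribed rank detects a trivial Euler class along some coordinate, while the presence of a $\ccp{2}$-factor in the bundle structure is detected by the quadratic form on $H^2$ (notably, $\ccp{2}$ contributes generators $\alpha,\beta$ with $\alpha^2=\beta^2$ and $\alpha\beta=0$, a pattern not realized in any Bott manifold).

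For the four exceptional manifolds, I would compute each of their cohomology rings explicitly, verify that they are not isomorphic to any bundle-type cohomology ring via the same invariants as above, and compare them among themselves; if two exceptionals realize the same ring, I would exhibit a homeomorphism directly, quite possibly via an equivariant connected sum decomposition that is the same after forgetting the torus action (this is the mechanism that leaves room for ordinary homeomorphism even when weak equivariance fails). The main obstacle I anticipate is the cohomological separation step --- distinguishing non-bundle-type exceptions from bundle-type manifolds purely in terms of $H^*$ --- because all the rings in question have the same gross structure (generated by a small number of degree-$2$ elements with quadratic relations). Extracting clean, checkable invariants from the characteristic data of the ``bad corners'' in the non-bundle characteristic maps, in a way that is robust under arbitrary ring isomorphism, is the delicate point.
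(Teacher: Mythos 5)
Your skeleton (reduce to the finite list of Lemma \ref{lem:summary up to w.e.homeo.}, split into Bott manifolds, $\cp{1}$-bundles over $\ccp{2}$, and $\ccp{2}$-bundles over $\cp{1}$, and separate the classes by ring invariants such as square-zero degree-two elements) matches the paper's Section 5. But there is a genuine missing idea: you have no mechanism for producing a homeomorphism between two quasitoric manifolds that are \emph{not} weakly equivariantly homeomorphic, and the theorem forces you to do exactly that. The paper's tool is Jupp's classification of simply connected $6$-manifolds (Theorem \ref{thm:classification theorem of 6-manifolds}): one checks that a given ring isomorphism preserves $w_2$ and $p_1$ (computed from the characteristic data via Theorem \ref{thm:characteristic classes of a quasitoric manifold}) and then Jupp supplies the homeomorphism inducing it. This is used twice in an essential way. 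First, within the family of $\cp{1}$-bundles over $\ccp{2}$ ($\calM_2$), a cohomology isomorphism between $M(\xi_{s,t})$ and $M(\xi_{x,y})$ only forces $(s-t)^2+s^2=(x-y)^2+x^2$, which does not pin down $(s,t)$ up to the weakly equivariant moves; your ``the Euler class is recoverable up to sign'' argument would try to prove a bundle isomorphism that simply need not exist (Lemma \ref{lem:strong cohomological rigidity of M_2}). Second, your prediction about the four exceptional manifolds is wrong: three of them, $M(\chi_5)$, $M(\chi_6)$, $M(\chi_{10})$, \emph{do} have cohomology rings isomorphic to those of bundle-type manifolds in $\calM_2$, and the proof shows they are homeomorphic to them via Jupp (Lemma \ref{lem:M(chi_5),M(chi_6) in M_2}); only $M(\chi_1)$ is genuinely isolated. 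So the step of your plan that ``verifies the exceptionals are not isomorphic to any bundle-type ring'' would fail, and the connected-sum mechanism you propose as a fallback is not what rescues it.

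A second, smaller gap: the theorem asks for a homeomorphism inducing the \emph{given} isomorphism $\varphi$, i.e.\ strong cohomological rigidity, whereas your reduction only aims to show that cohomologically isomorphic representatives are homeomorphic. For the Bott-manifold class you must invoke the \emph{strong} rigidity of three-stage Bott manifolds from \cite{Choi}, for $\calM_2$ you again need Jupp (which realizes the specific isomorphism), and for $M(\chi_1)$ you must compute that $\Aut(H^*(\chi_1))=\{\pm\id\}$ and realize both automorphisms by self-homeomorphisms (Lemma \ref{lem:automorphisms of chi_1}); none of these realization steps appears in your outline.
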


This article is organized as follows.
In Section 2, we review the basics of quasitoric manifolds and give the precise definitions of the terms bundle-type quasitoric manifold and so on.
In Section 3, we prove the key lemma of this article (Lemma \ref{lem:sublemma, filtration lemma 2-2}) and prove Theorem \ref{main thm 1}.
In Section 4, we classify the quasitoric manifolds over $I^3$ up to weakly equivariant homeomorphism.
Finally, we give the proof of Theorem \ref{main thm 2} in Section 5.

\section{Preliminaries}

\subsection{Basics of quasitoric manifolds}

First, let us begin with the definition of a quasitoric manifold.
The reader can find more detailed explanation in e.g. Buchstaber and Panov \cite{BP02} and \cite{H15}.
Here we always assume that $\bbC^n$ is equipped with the standard $T^n$-action, i.e. the action defined by $t z:=(t_1 z_1,\ldots,t_n z_n)$ where $t=(t_1,\ldots,t_n)\in T^n$ and $z=(z_1,\ldots,z_n)\in \bbC^n$ respectively.

For two $T^n$-spaces $X$ and $Y$, a map $f\co X\to Y$ is called \textit{weakly equivariant} if there exists $\psi\in\Aut(T^n)$ such that $f(t x)=\psi(t) f(x)$ for any $t\in T^n$ and $x\in X$, where $\Aut(T^n)$ denotes the group of continuous automorphisms of $T^n$.
We say a smooth $T^n$-action on a $2n$-dimensional differentiable manifold $M$ is \textit{locally standard}
if for each $z\in M$ there exists a triad $(U,V,\varphi)$ consisting of a $T^n$-invariant open neighborhood $U$ of $z$,
a $T^n$-invariant open subset $V$ of $\bbC^n$, and a weakly equivariant diffeomorphism $\varphi\co U \to V$.

The orbit space of a locally standard $T^n$-action is naturally regarded as a \textit{manifold with corners}, by which we mean a Hausdorff space locally homeomorphic to an open subset of $(\bbR_{\geq 0})^n$ with the transition functions preserving the depth.
Here $\depth\,x$ of $x\in (\bbR_{\geq 0})^n$ is defined as the number of zero components of $x$.
By definition, for a manifold with corners $X$, we can define the \textit{depth of} $x\in X$ by $\depth\,x:=\depth\,\varphi(x)$ where $\varphi$ is an arbitrary local chart around $x$.
Then a map $f$ between two manifolds with corners is said to \textit{preserve the corners} if $\depth\circ f=\depth$.

An $n$-dimensional convex polytope is called \textit{simple} if it has exactly $n$ facets at each vertex.
We regard a simple polytope as a manifold with corners in the natural way, and define a quasitoric manifold as follows.

\begin{df}\label{df:qt mfd}
A \textit{quasitoric manifold over a simple polytope $P$} is a pair $(M,\pi)$ consisting of a $2n$-dimensional smooth manifold $M$ equipped with a locally standard $T^n$-action
and a continuous surjection $\pi\co M\to P$ which descends to a homeomorphism from $M/T^n$ to $P$ preserving the corners.
We omit the projection $\pi$ unless it is misleading.
\end{df}


Next we recall the two ways to construct a quasitoric manifold.
In this section $P$ always denotes an $n$-dimensional simple polytope with exactly $m$ facets and $\calF(P)$ denotes the face poset of $P$.
In addition, we define $\mathfrak{T}^n$ as the set of subtori of $T^n$.

\begin{df}\label{df:ch map}
A \textit{characteristic map on $P$} is a map $\ell\co \calF(P)\to \mathfrak{T}^n$ such that
\begin{itemize}
\item[(i)] $\dim \ell(F)=n-\dim F$ for each face $F$,
\item[(ii)] $\ell(F)\subseteq \ell(F')$ if $F'\subseteq F$, and
\item[(iii)] if a face $F$ is the intersection of distinct $k$ facets $F_1,\ldots,F_k$, then the inclusions $\ell(F_i)\to\ell(F)$ ($i=1,\ldots,k$) induce an isomorphism $\ell(F_1)\times\cdots\times\ell(F_k)\to\ell(F)$.
\end{itemize}
\end{df}

\begin{rem}
For each face $F$ of $P$, we denote the relative interior of $F$ by $\relint\,F$.
Given a quasitoric manifold $M$ over $P$, then we obtain a characteristic map $\ell_M$ on $P$ by
\[ \ell_M(F):= (T^n)_z \]
where $z$ is an arbitrary point of $\pi^{-1}(\relint\, F)$ and $(T^n)_z$ denotes the isotropy subgroup at $z$.
Actually we can easily check the conditions of Definition \ref{df:ch map} by the locally-standardness.
\end{rem}

\begin{const}\label{const:ch map}
For each point $q\in P$, we denote the minimal face containing $q$ by $G(q)$.
Then we obtain a quasitoric manifold $(M(\ell),\pi)$ over $P$ by setting
\[M(\ell):=(T^n\times P)/\!\sim_{\ell},\]
where $(t_1,q_1)\sim_{\ell}(t_2,q_2)$ if and only if $q_1=q_2$ and $t_1t_2^{-1}\in \ell(G(q_1))$, and $\pi\co M(\ell)\to P$ denotes the map induced by $\pr_2\co T^n\times P\to P$.
Obviously the $T^n$-action on $T^n\times P$ by multiplication on the first component descends to a $T^n$-action on $M(\ell)$.

We can define a differentiable structure on $M(\ell)$ as follows.
We regard $P$ as a subset of $\bbR^n$ and denote the hyperplane $\{(x_1,\ldots,x_n)\in\bbR^n\,\vert\,x_i=0\}$ by $H_i$ ($i=1,\ldots,n$).
For a vertex $v$ of $P$, we denote by $U_v$ the open subset of $P$ obtained by deleting all faces not containing $v$ from $P$, and take $n$ facets $F_1,\ldots,F_n$ of $P$ such that $v=F_1\cap\cdots\cap F_n$.
Additionally, we take an affine transformation $\bar{\varphi}_v$ of $\bbR^n$ which maps $U_v$ onto an open subset of $(\bbR_{\geq 0})^n$ and $F_i$ into $H_i$.
If we take an automorphism $\psi_v$ of $T^n$ which maps $\ell(F_{i})$ into the $i$-th coordinate subtorus for each $i=1,\ldots,n$, then the map $\psi_v\times\bar{\varphi}_v\co T^n\times U_v\to T^n\times (\bbR_{\geq 0})^n$ descends to a weakly equivariant homeomorphism $\varphi_v$ from $\pi^{-1}(U_v)$ to some $T^n$-invariant open subset of $\bbC^n$.
We can check that the atlas $\{(\pi^{-1}(U_v),\varphi_v)\}$ gives a differentiable structure on $M(\ell)$.
Clearly the $T^n$-action on $M(\ell)$ is locally standard and the orbit space is identified with $P$, i.e. $M(\ell)$ is a quasitoric manifold over $P$.
Moreover, by definition, we have $\ell=\ell_{M(\ell)}$.
\end{const}

In this article, we define an isomorphism of quasitoric manifolds as follows:
for two quasitoric manifolds $(M,\pi)$ and $(M',\pi')$ over $P$, a map $f\co M\to M'$ is called an \textit{isomorphism of quasitoric manifolds} if it is a $T^n$-equivariant homeomorphism such that $\pi'\circ f=\pi$.

By using the blow-up method of Davis \cite{D78}, we see that for any quasitoric manifold $M$ over $P$ there exists a $T^n$-equivariant surjection $T^n\times P\to M$ which descends to an isomorphism $M(\ell_M)\to M$ of quasitoric manifolds.
Thus we obtain the following.

\begin{prop}\label{prop:fundamental}
The correspondence $\ell \mapsto M(\ell)$ gives a bijection from the set of characteristic maps on $P$ to the set of isomorphism classes of quasitoric manifolds over $P$, and the inverse is given by $M\mapsto \ell_M$.
\end{prop}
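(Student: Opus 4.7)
The plan is to establish the bijection by verifying the two composite maps are identities. That is, I would show (a) for every characteristic map $\ell$ on $P$, the identity $\ell_{M(\ell)}=\ell$ holds, and (b) for every quasitoric manifold $(M,\pi)$ over $P$, there is an isomorphism $M(\ell_M)\cong M$ of quasitoric manifolds.

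Part (a) is essentially built into Construction \ref{const:ch map}: given a point $q\in\relint\,F$, the isotropy subgroup of $[(t,q)]\in M(\ell)$ under the descended $T^n$-action is exactly the set of $s\in T^n$ with $s\in\ell(G(q))=\ell(F)$, so $\ell_{M(\ell)}(F)=\ell(F)$. I would just record this as an immediate consequence of the definition of $\sim_{\ell}$.

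Part (b) is the main content. The strategy is to construct a continuous $T^n$-equivariant surjection $\Phi\co T^n\times P\to M$ over $P$ (meaning $\pi\circ\Phi=\pr_2$), and then to verify that $\Phi(t_1,q_1)=\Phi(t_2,q_2)$ holds precisely when $(t_1,q_1)\sim_{\ell_M}(t_2,q_2)$, so that $\Phi$ descends to a continuous $T^n$-equivariant bijection $M(\ell_M)\to M$ over $P$, which is automatically a homeomorphism since both sides are compact Hausdorff. To build $\Phi$, I would produce a continuous section $s\co P\to M$ of $\pi$ and set $\Phi(t,q):=t\cdot s(q)$. Near each vertex $v$, the local standardness provides a weakly equivariant chart $\varphi_v\co \pi^{-1}(U_v)\to V_v\subseteq\bbC^n$, and composing with the obvious section $(r_1,\ldots,r_n)\mapsto(r_1,\ldots,r_n)$ of the modulus map $\bbC^n\to(\bbR_{\geq 0})^n$ gives a local section $s_v$ of $\pi$ over a neighborhood of the closed star of $v$.

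The main obstacle is patching these local sections into a single continuous section on all of $P$, since two neighboring local sections $s_v,s_{v'}$ need not coincide on the overlap: they will differ by a multiplication by an element of $T^n$ that depends continuously on $q$ and lies in the stabilizer $\ell_M(G(q))$ at each $q$. This is exactly the issue resolved by the blow-up / colimit argument of Davis \cite{D78}: one orders the faces by dimension and inductively modifies the local sections face-by-face, using that the correcting cocycle takes values in a contractible coset of $T^n/\ell_M(G(q))$ over the contractible relative interior of each face, so it can be trivialized. Once such a global section $s$ is obtained, $\Phi$ is surjective (because every $T^n$-orbit meets $s(P)$), and the equality of the two equivalence relations follows from the identification $(T^n)_{s(q)}=\ell_M(G(q))$. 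This yields the isomorphism $M(\ell_M)\to M$, which combined with (a) gives the claimed bijection.
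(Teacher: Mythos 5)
Your proposal is correct and follows essentially the same route as the paper: the identity $\ell_{M(\ell)}=\ell$ is read off from Construction \ref{const:ch map}, and the isomorphism $M(\ell_M)\to M$ is obtained from a $T^n$-equivariant surjection $T^n\times P\to M$ whose existence rests on the blow-up argument of Davis \cite{D78}, exactly as the paper does. Your extra detail on building the surjection from a global section patched from local-standardness charts is a reasonable unpacking of that citation (though strictly speaking the coset space $T^n/\ell_M(G(q))$ is a torus, not contractible --- the trivialization works because the relative interiors of the faces are contractible).
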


The second way to construct a quasitoric manifold uses a characteristic matrix and a moment-angle manifold.
Below the term \textit{facet labeling of} $P$ means a bijection from $\{1,\ldots,m\}$ to the set of the facets of $P$.

\begin{df}
An $(n\times m)$-matrix $\lambda=(\lambda_1,\ldots,\lambda_m)$ of integers is called a \textit{characteristic matrix on $P$ with respect to the facet labeling $F_1,\ldots,F_m$} if it satisfies the following \textit{nonsingularity condition}:
if $F_{i_1},\ldots,F_{i_n}$ meet at a vertex, then $\det(\lambda_{i_1},\ldots,\lambda_{i_n})=\pm 1$.
%
\end{df}

Hereafter, unless mentioned otherwise, we fix a facet labeling $F_1,\ldots,F_m$ of $P$.

\begin{rem}
Given a characteristic matrix $\lambda$ on $P$, we can define a characteristic map $\ell_{\lambda}$ by
\[ \ell_\lambda(F_{i_1}\cap \ldots \cap F_{i_k}):= \mathrm{im}\,(\lambda_{i_1},\ldots,\lambda_{i_k}) \]
where we identify $S^1$ with $\bbR/\bbZ$ and regard $(\lambda_{i_1},\ldots,\lambda_{i_k})$ as a homomorphism from $T^k$ to $T^n$.
Obviously, any characteristic map is obtained from some characteristic matrix in this way.
\end{rem}

\begin{const}\label{const:moment-angle mfd}
Let $K_P$ be the simplicial complex on $[m]:=\{1,\ldots,m\}$ defined by $K_P:=\{J_F\,|\,F\in\calF(P)\}$ where $J_F:=\{i\in[m]\,|\,F\subseteq F_i\}$.
We regard $D^2$ as the unit disc of $\bbC$ and define
$$ (D^2,S^1)^{J} := \{ (z_1,\ldots,z_m) \in (D^2)^m \,\vert\, \minor{z_i}=1\text{ if } i\not\in J \} $$
for each $J \subseteq [m]$.
Then the \textit{moment-angle manifold} $\calZ_P$ is defined as the union
$$\bigcup_{J \in K_P}(D^2,S^1)^{J} \subseteq (D^2)^m,$$
which is equipped with the $T^m$-action defined by $(t_1,\ldots,t_m)\cdot(z_1,\ldots,z_m)=(t_1 z_1,\ldots,t_m z_m)$.

We can define an embedding $\varepsilon\co P\to \calZ_P$ as follows.
Denote the barycentric subdivision of $K_P$ by $K'_P$.
If we take $b_F\in \relint\, F$ for each face $F$, then the correspondence $J_F\mapsto b_F$ gives a triangulation of $P$ by $K'_P$.
Then we define $\varepsilon\co |K'_P|\to \calZ_P$ so that $\varepsilon(J_F)=(c_1(F),\ldots,c_m(F))$ for the vertices and it restricts to an affine map on each simplex, where $c_i(F)=0$ if $F\subseteq F_i$ and $c_i(F)=1$ otherwise.
Note that $\varepsilon$ descends to a homeomorphism from $P\cong|K'_P|$ to $\calZ_P/T^m$.
If we define $G(q)$ as in Construction \ref{const:ch map} and $\ell_P\co \calF(P)\to\mathfrak{T}^m$ by
$$ \ell_P(F):=\{(t_1,\ldots,t_m)\in T^m\,\vert\, t_i =1\text{ if }F\not\subseteq F_i)\}, $$
then the correspondence $(t,q)\mapsto t\cdot \varepsilon(q)$ gives an equivariant homeomorphism from $ (T^m\times P)/\!\sim$ to $\calZ_P $, where $(t_1,q_1)\sim(t_2,q_2)$ if and only if $q_1=q_2$ and $t_1t_2^{-1}\in \ell_P(G(q_1))$.
Moreover, we can define a differentiable structure on $(T^m\times P)/\!\sim\ \cong \calZ_P$ in the same way as Construction \ref{const:ch map}, and then the $T^m$-action on $\calZ_P$ is smooth.

Let $\lambda$ be a characteristic matrix on $P$.
If we regard $\lambda$ as a homomorphism from $T^m$ to $T^n$, then we can check that $T_\lambda:=\ker \lambda$ acts on $\calZ_P$ freely.
Thus we obtain a manifold $M(\lambda):=\calZ_P/T_\lambda$ with a smooth action of $T^m/T_{\lambda}\cong T^n$ where the isomorphism is induced by $\lambda$.
We can easily check that this $T^n$-action on $M(\lambda)$ is locally standard.
Actually, the map $\lambda\times\id_P\co T^m\times P\to T^n\times P$ descends to an equivariant diffeomorphism from $M(\lambda)$ to $M(\ell_\lambda)$.
We define $\pi\co M(\lambda)\to P$ as the composite of the quotient map $M(\lambda)\to M(\lambda)/T^n=\calZ_P/T^m$ and $\varepsilon^{-1}$, and then $(M(\lambda),\pi)$ is a quasitoric manifold over $P$.
%
\end{const}

Clearly, we have the following proposition.

\begin{prop}
For a characteristic matrix $\lambda$ on $P$, the two quasitoric manifolds $M(\lambda)$ and $M(\ell_\lambda)$ are smoothly isomorphic.
\end{prop}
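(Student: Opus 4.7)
The strategy is the one foreshadowed in Construction \ref{const:moment-angle mfd}: to show that the homomorphism $\lambda\times\id_P\co T^m\times P\to T^n\times P$ descends to an isomorphism $M(\lambda)\to M(\ell_\lambda)$ of quasitoric manifolds, and then to check that the differentiable structures installed on the two sides agree under this descent.

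The first and essentially combinatorial step is to match the equivalence relations defining the quotients. If $G(q)=F_{i_1}\cap\cdots\cap F_{i_k}$, then by definition $\ell_P(G(q))$ is the coordinate subtorus of $T^m$ spanned by $i_1,\ldots,i_k$, whose image under $\lambda$ is exactly $\mathrm{im}(\lambda_{i_1},\ldots,\lambda_{i_k})=\ell_\lambda(G(q))$. Hence $\lambda\times\id_P$ sends $\sim_{\ell_P}$-classes into $\sim_{\ell_\lambda}$-classes and induces a continuous surjection $\calZ_P\cong(T^m\times P)/\!\sim_{\ell_P}\to M(\ell_\lambda)$. This surjection is $T_\lambda$-invariant for the translation action of $T_\lambda=\ker\lambda$ on the $T^m$-factor. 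Reversing the same identity shows that if two points have the same image in $M(\ell_\lambda)$, then they differ by an element of $\ell_P(G(q))\cdot T_\lambda$ in the $T^m$-factor, and so already represent a single $T_\lambda$-orbit in $\calZ_P$. Thus we obtain a continuous bijection $M(\lambda)=\calZ_P/T_\lambda\to M(\ell_\lambda)$, equivariant for the $T^n$-actions via the isomorphism $T^m/T_\lambda\cong T^n$ induced by $\lambda$, and commuting with the projections to $P$ by construction.

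The main obstacle is promoting this continuous equivariant bijection to a diffeomorphism, and I would handle it chart by chart using the vertex atlases. At a vertex $v=F_{i_1}\cap\cdots\cap F_{i_n}$, the chart $\varphi_v$ on $M(\ell_\lambda)$ of Construction \ref{const:ch map} is assembled from an affine straightening $\bar\varphi_v$ of $U_v$ together with an automorphism $\psi_v\in\Aut(T^n)$ sending each $\ell_\lambda(F_{i_j})$ onto the $j$-th coordinate circle; such $\psi_v$ exists precisely because of the nonsingularity condition $\det(\lambda_{i_1},\ldots,\lambda_{i_n})=\pm1$ at $v$. The analogous vertex chart on $\calZ_P$ uses the $T^m$-automorphism that splits off the coordinate subtorus of $T^m$ indexed by $i_1,\ldots,i_n$, and composing with $\lambda\times\id_P$ reproduces $\varphi_v$ up to the smooth quotient $T^m\to T^m/T_\lambda\cong T^n$. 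In local charts around every vertex the candidate map therefore factors as a smooth submersion followed by a local diffeomorphism, and after dividing out $T_\lambda$ this yields a local diffeomorphism on $\calZ_P/T_\lambda$. Combined with the previous paragraph, the resulting global map $M(\lambda)\to M(\ell_\lambda)$ is the desired smooth isomorphism of quasitoric manifolds.
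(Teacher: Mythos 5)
Your proposal is correct and follows exactly the route the paper intends: the paper offers no written proof (it declares the proposition ``clear''), relying on the observation in Construction \ref{const:moment-angle mfd} that $\lambda\times\id_P$ descends to an equivariant diffeomorphism $M(\lambda)\to M(\ell_\lambda)$, and your argument is a careful expansion of precisely that observation, first matching the equivalence relations and then checking smoothness in the vertex charts. No discrepancy with the paper's approach.
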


\begin{df}
For a quasitoric manifold $M$ over $P$, a \textit{characteristic matrix of $M$} means a characteristic matrix $\lambda$ on $P$ such that $M(\lambda)$ is isomorphic to $M$.
In other words, $\lambda$ is called a characteristic matrix of $M$ if $\ell_{\lambda}=\ell_M$.
\end{df}


Next we consider the weakly equivariant homeomorphisms between quasitoric manifolds.
We denote by $[m]_{\pm}$ the set of $2m$ integers $\pm1,\ldots,\pm m$ and regard that $\bbZ/2$ acts on $[m]_\pm$ by multiplication with $-1$.
Additionally, we define a map $\sgn\co [m]_\pm\to \bbZ/2$ so that $x=\sgn(x)\cdot|x|$ where we identify $\bbZ/2$ with the multiplicative group $\{\pm 1\}$.

\begin{df}\label{df:iota}
We define $R_m$ as the group of ($\bbZ/2$)-equivariant permutations of $[m]_\pm$ and $p\co R_m\to \mathfrak{S}_m$ as the canonical surjection to the symmetric group.
In addition, we define $\iota\co R_m\to GL_m(\bbZ)$ so that $e_i\cdot\iota(\rho)=\sgn_i(\rho)\cdot e_{\sigma(i)}$ ($i=1,\ldots,m$) where $e_1,\ldots,e_m$ denote the standard basis of $\bbZ^m$,  $\sigma:=p(\rho)$, and $\sgn_i(\rho):=\sgn(\rho(i))$.
\end{df}

\begin{rem}
The map $\iota\co R_m\to GL_m(\bbZ)$ defined above is an antihomomorphism.
Actually, if we take $\rho_i\in R_m$ and put $\sigma_i:=p(\rho_i)$ ($i=1,2$), we can check $\iota(\rho_1\circ \rho_2)=\iota(\rho_2)\cdot\iota(\rho_1)$ as follows.
For a fixed $j\in\{1,\ldots,m\}$, if we put $k:=\sigma_2(j)$, then  $\sgn_j(\rho_1\circ\rho_2)=\sgn_j(\rho_2)\cdot\sgn_k(\rho_1)$.
Therefore
\begin{align*}
e_j\cdot\iota(\rho_1\circ\rho_2)&=\sgn_j(\rho_1\circ\rho_2)\cdot e_{\sigma_1\circ\sigma_2(j)}=\sgn_j(\rho_2)\cdot(\sgn_k(\rho_1)\cdot e_{\sigma_1(k)}) \\ &=\sgn_j(\rho_2)\cdot (e_k\cdot\iota(\rho_1))=(\sgn_j(\rho_2)\cdot e_{\sigma_2(j)})\cdot\iota(\rho_1)=e_j\cdot\iota(\rho_2)\cdot\iota(\rho_1).
\end{align*}
\end{rem}

\begin{df}\label{df:action on Lambda_P}
For a simple polytope $P$, we denote by $\Aut(P)$ the group of combinatorial self-equivalences of $P$ and regard it as a subgroup of the symmetric group $\mathfrak{S}_m$ by using the facet labeling.
Then we denote by $R(P)$ the subgroup $p^{-1}(\Aut(P))$ of $R_m$.
Moreover, we define $\Lambda_P$ as the set of characteristic matrices on $P$ and a left action of $GL_n(\bbZ)\times R(P)$ on $\Lambda_P$ by $(\psi,\rho)\cdot \lambda:=\psi\cdot\lambda\cdot\iota(\rho)$.
\end{df}

\begin{df}\label{df:rep of wehomeo}
Let $P$ be a simple polytope, $\lambda$, $\lambda'$ be characteristic matrices on $P$, and $f\co M(\lambda)\to M(\lambda')$ be a weakly equivariant homeomorphism.
We denote by $\bar{f}$ the corner-preserving self-homeomorphism of $P$ induced by $f$.
Then a pair $(\psi,\rho)\in GL_n(\bbZ)\times R(P)$ is called the \textit{representation of} $f$ if the following (i), (ii), and (iii) hold.
\begin{itemize}
\item[(i)] $f(t x)=\psi(t)f(x)$ for any $t\in T^n$ and $x\in M(\lambda)$, where we identify $GL_n(\bbZ)$ with $\Aut(T^n)$ through the left action on $\bbR^n/\bbZ^n=T^n$.
\item[(ii)] If we denote by $\sigma\!_f$ the combinatorial self-equivalence of $P$ induced by $\bar{f}$, then $\sigma\!_f=p(\rho)$.
\item[(iii)] $\lambda'=(\psi,\rho)\cdot \lambda$.
\end{itemize}
\end{df}

It is easy to see that for any weakly equivariant homeomorphism $f\co M(\lambda)\to M(\lambda')$ there exists a unique representation of $f$.
Conversely, we have the following proposition.

\begin{prop}\label{realization of rep}
For any pair $(\psi,\rho)\in GL_n(\bbZ)\times R(P)$ and a characteristic matrix $\lambda$ on $P$, there exists a weakly equivariant homeomorphism $f\co M(\lambda)\to M(\lambda')$ of which the representation is $(\psi,\rho)$.
Here $\lambda'$ denotes the characteristic matrix $(\psi,\rho)\cdot\lambda$ on $P$.
\end{prop}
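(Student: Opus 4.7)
My plan is to decompose $(\psi,\rho) = (\psi,\id)\cdot(\id,\rho)$, realize each factor by a weakly equivariant homeomorphism, and compose. For the factor $(\psi,\id)$ I would work with the presentation $M(\ell_\lambda) = (T^n\times P)/\!\sim_{\ell_\lambda}$ of Construction \ref{const:ch map} and define
\[ f_\psi\co M(\ell_\lambda)\to M(\ell_{\psi\circ\lambda}),\qquad [(t,q)]\mapsto [(\psi(t),q)]. \]
Since $\psi\in\Aut(T^n)$ carries $\ell_\lambda(F)$ isomorphically onto $\ell_{\psi\lambda}(F)$ for every face $F$, the map $f_\psi$ descends to a well-defined weakly equivariant homeomorphism whose induced map on $P$ is $\id_P$, and its representation is visibly $(\psi,\id)$.

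For the factor $(\id,\rho)$ I would switch to the moment-angle presentation. Set $\sigma := p(\rho)\in\Aut(P)$. First I would realize $\sigma$ as a corner-preserving self-homeomorphism $\bar\sigma\co P\to P$ with $\bar\sigma(F_i) = F_{\sigma(i)}$; this can be done via the simplicial self-map of the barycentric subdivision $K'_P$ that permutes the face barycentres according to $\sigma$. Next, using the presentation $\calZ_P\cong(T^m\times P)/\!\sim$ of Construction \ref{const:moment-angle mfd}, I would set
\[ \tilde F_\rho(t,q) := (\iota(\rho)^{-1}(t),\,\bar\sigma(q)), \]
regarding $\iota(\rho)\in GL_m(\bbZ)$ as an automorphism of $T^m$. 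Well-definedness on the quotient reduces to the identity
\[ \iota(\rho)^{-1}\bigl(\ell_P(F)\bigr) = \ell_P\bigl(\bar\sigma(F)\bigr)\quad\text{for every face $F$ of $P$}, \]
which follows from $e_i\cdot\iota(\rho) = \sgn_i(\rho)\,e_{\sigma(i)}$ together with the fact that the signs act trivially on coordinate subtori. The resulting $F_\rho\co\calZ_P\to\calZ_P$ then satisfies $F_\rho(tz) = \iota(\rho)^{-1}(t)\,F_\rho(z)$, and since $\lambda'=\lambda\iota(\rho)$ implies $\iota(\rho)^{-1}(T_\lambda)=T_{\lambda'}$, this $F_\rho$ descends further to $\bar F_\rho\co M(\lambda)\to M(\lambda')$. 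A direct unwinding of the isomorphisms $T^m/T_\lambda\xrightarrow{\lambda}T^n$ and $T^m/T_{\lambda'}\xrightarrow{\lambda'}T^n$ (using $\lambda'\iota(\rho)^{-1}=\lambda$) shows that the induced automorphism on $T^n$ is the identity, so the representation of $\bar F_\rho$ is $(\id,\rho)$.

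Finally, the composition $\bar F_\rho \circ f_\psi$ is a weakly equivariant homeomorphism $M(\lambda)\to M(\lambda')$, and checking Definition \ref{df:rep of wehomeo} directly gives that its representation is the product $(\psi,\rho)$. The step I would expect to demand the most care is the realization of $\sigma\in\Aut(P)$ as a corner-preserving $\bar\sigma$ together with the verification of the subtorus identity $\iota(\rho)^{-1}(\ell_P(F))=\ell_P(\bar\sigma(F))$; once these geometric inputs are in hand, the remaining verifications are routine diagram chases.
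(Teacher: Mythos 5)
Your argument is correct, and it shares the paper's essential idea --- realize $\sigma=p(\rho)$ by a corner-preserving self-homeomorphism of $P$ built from the barycentric subdivision $K'_P$, and then descend a product map to the quotient construction --- but it routes the permutation factor differently. The paper does everything in one step: it takes $\psi\times\bar{f}\co T^n\times P\to T^n\times P$ and checks that it descends to $M(\ell_\lambda)\to M(\ell_{\lambda'})$, the only nontrivial point being $\psi(\ell_\lambda(F))\subseteq \ell_{\lambda'}(\sigma(F))$, which is immediate from $\lambda'=\psi\lambda\iota(\rho)$ because the $\sigma(i)$-th column of $\lambda'$ is $\pm\psi\lambda_i$. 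Your factor $f_\psi$ is exactly this construction with $\rho=\id$, so the genuine divergence is your treatment of $(\id,\rho)$ through the moment-angle manifold: there you must additionally verify $\iota(\rho)^{-1}(\ell_P(F))=\ell_P(\bar\sigma(F))$ at the level of $T^m$ and the compatibility $\iota(\rho)^{-1}(T_\lambda)=T_{\lambda'}$ before descending through the free $T_\lambda$-action. All of these checks go through (and your composition order is consistent with $\iota$ being an antihomomorphism, so the composite really is represented by $(\psi,\rho)$), but the detour buys nothing here: the same descent criterion applied once to $\psi\times\bar{f}$ on $T^n\times P$ handles both factors simultaneously and avoids the bookkeeping with $T_\lambda$. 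Your version would be the natural one if one wanted the homeomorphism to lift to a $\iota(\rho)^{-1}$-equivariant self-map of $\calZ_P$, which the paper does not need.
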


\begin{proof}
First, by using the triangulation of $P$ given in Construction \ref{const:moment-angle mfd}, we can construct a corner-preserving self-homeomorphism $\bar{f}$ of $P$ which induces $p(\rho)$.
Since $\lambda'=\psi\cdot \lambda\cdot \iota(\rho)$, we see $\psi(\ell(F))\subseteq \ell'(\sigma(F))$ for each face $F$ of $P$, where $\sigma:=p(\rho)$, $\ell:=\ell_\lambda$ and $\ell':=\ell_{\lambda'}$.
It implies that $(\psi(t_1),\bar{f}(q_1))\sim_{\ell'}(\psi(t_2),\bar{f}(q_2))$ if $(t_1,q_1)\sim_\ell(t_2,q_2)$, where $\sim_{\ell}$ and $\sim_{\ell'}$ are defined in the same way as Construction \ref{const:ch map}.
Thus we see that $\psi\times\bar{f}\co T^n\times P\to T^n\times P$ descends to a weakly equivariant homeomorphism $f\co M(\ell)\to M(\ell')$, of which the representation is obviously $(\psi,\rho)$.
%
\end{proof}

\begin{cor}\label{classification up to wehomeo}
If we denote by $\calM_P^\mathrm{weh}$ the set of weakly equivariant homeomorphism classes of quasitoric manifolds over $P$, then the correspondence $\lambda\mapsto M(\lambda)$ gives a bijection from $\Lambda_P/(GL_n(\bbZ)\times R(P))$ to $\calM_P^\mathrm{weh}$.
\end{cor}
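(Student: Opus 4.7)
The plan is to assemble three pieces already available in the excerpt: Proposition \ref{prop:fundamental}, Proposition \ref{realization of rep}, and the remark (immediately before Proposition \ref{realization of rep}) asserting that every weakly equivariant homeomorphism $f \co M(\lambda) \to M(\lambda')$ admits a unique representation $(\psi, \rho) \in GL_n(\bbZ) \times R(P)$ in the sense of Definition \ref{df:rep of wehomeo}.

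For well-definedness of $\lambda \mapsto [M(\lambda)]$ on the orbit space, I would observe that if $\lambda' = (\psi, \rho) \cdot \lambda$ then Proposition \ref{realization of rep} directly supplies a weakly equivariant homeomorphism $M(\lambda) \to M(\lambda')$, so the two manifolds define the same class in $\calM_P^{\mathrm{weh}}$. Surjectivity follows by combining Proposition \ref{prop:fundamental} (which identifies each quasitoric manifold $M$ over $P$ with $M(\ell_M)$) with the remark asserting that every characteristic map arises as $\ell_\lambda$ for some characteristic matrix $\lambda$; together, $M$ is weakly equivariantly homeomorphic to $M(\lambda)$ for some $\lambda \in \Lambda_P$. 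For injectivity, any weakly equivariant homeomorphism $M(\lambda) \to M(\lambda')$ has a representation $(\psi, \rho)$, and condition (iii) of Definition \ref{df:rep of wehomeo} forces $\lambda' = (\psi, \rho) \cdot \lambda$, placing $\lambda$ and $\lambda'$ in the same orbit.

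The only point that requires any real thought is the existence and uniqueness of the representation of a weakly equivariant homeomorphism $f$, on which injectivity rests. Existence extracts $\psi \in GL_n(\bbZ) \cong \Aut(T^n)$ from the weakly equivariant condition and extracts the combinatorial permutation $\sigma_f = p(\rho)$ from the induced corner-preserving self-homeomorphism $\bar f$ of $P$; the signs of $\rho$ are then pinned down by how $f$ carries the isotropy circles $\ell_\lambda(F_i)$ onto $\ell_{\lambda'}(F_{\sigma_f(i)})$, which simultaneously forces condition (iii). This is precisely where the paper's stated \emph{easy to see} fact does its work; once it is granted, the corollary reduces to the bookkeeping described above.
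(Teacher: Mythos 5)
Your proposal is correct and matches the paper's (implicit) argument: the corollary is stated without proof precisely because it follows by combining Proposition \ref{prop:fundamental}, Proposition \ref{realization of rep}, and the asserted existence and uniqueness of the representation of a weakly equivariant homeomorphism, exactly as you assemble them. You have also correctly located the only nontrivial input in the existence/uniqueness of the representation, which the paper likewise leaves as "easy to see."
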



Then we consider the cohomology ring of a quasitoric manifold $M=M(\lambda)$ over $P$.
The following  computation is due to \cite{DJ91}.

Let us define the \textit{Davis-Januszkiewicz space} $DJ_P$ as the union
$$\bigcup_{J \in K_P}BT^J \subseteq BT^m=(\bbC P^{\infty})^m$$
where $ BT^{J} := \{ (y_1,\ldots,y_m) \in BT^m \,\vert\, y=*\text{ if }i\not\in J \} $ and $*$ denotes the basepoint of $\bbC P^{\infty}$.
$K_P$ is the simplicial complex defined in Construction \ref{const:moment-angle mfd}.
Denote the Borel constructions of $M$ and $\calZ_P$ by $\calB_{T^n} (M)$ and $\calB_{T^m} (\calZ_P)$ respectively,
i.e. $\calB_{T^n} (M)$ (resp. $\calB_{T^m} (\calZ_P)$) denotes the quotient of $ET^n\times M$ (resp. $ET^m\times \calZ_P$) by the action of $T^n$ (resp. $T^m$) defined by $t\cdot(x,y):=(x t,t^{-1}y)$.
Then we have a homotopy commutative diagram
\[
\xymatrix {
\calZ_P \ar[1,0] \ar[0,1] & M \ar[1,0] \\
\calB_{T^m} (\calZ_P) \ar[1,0] \ar[0,1] & \calB_{T^n} (M) \ar[1,0] \\
BT^m \ar[0,1]^{B\lambda} & BT^n
}
\]
where the columns are fiber bundles, the middle horizontal map is a homotopy equivalence, and the bottom one is the map induced by $\lambda\co T^m\to T^n$.
By using homotopy colimit, we can construct a homotopy equivalence from $DJ_P$ to $\calB_{T^m} (\calZ_P)$ such that the diagram
\[
\xymatrix {
& \calB_{T^m} (\calZ_P)  \ar[1,0] \\
DJ_P \ar[0,1] \ar[-1,1] &  BT^m 
}
\]
commutes up to homotopy, where the horizontal arrow is the inclusion.

Thus we obtain the following theorem.

\begin{thm}[Davis and Januszkiewicz]\label{thm:qt mfd is a homotopy fiber}
Let $P$ be an $n$-dimensional simple polytope with $m$ facets and $\lambda$ be a characteristic matrix on $P$.
Then $M(\lambda)$ is the homotopy fiber of the map $B\lambda\circ \mathrm{incl}\co DJ_P \to BT^n$,
where $\mathrm{incl}$ denotes the inclusion into $BT^m$.
\end{thm}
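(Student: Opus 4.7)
The strategy is to recognize that the theorem is essentially assembled from the homotopy commutative diagram displayed just before its statement, together with the standard fact that for a free action of a closed subgroup the Borel construction collapses. The crux of the argument is to verify that $T_\lambda=\ker\lambda$ acts freely on $\calZ_P$ and that the resulting map $\calB_{T^m}(\calZ_P)\to\calB_{T^n}(M(\lambda))$ is a homotopy equivalence lying over $BT^n$.

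First I would establish that $T_\lambda$ acts freely on $\calZ_P$. Any $z\in\calZ_P$ lies in the stratum indexed by some face $F$ of $P$ (so that the $i$-th coordinate vanishes iff $F\subseteq F_i$), and its $T^m$-isotropy equals the coordinate subtorus $\ell_P(F)=T^{J_F}$ on the facets $F_{i_1},\ldots,F_{i_k}$ containing $F$. Choosing a vertex $v=F_{j_1}\cap\cdots\cap F_{j_n}$ with $v\subseteq F$, the nonsingularity condition says that the $n\times n$ minor $(\lambda_{j_1},\ldots,\lambda_{j_n})$ has determinant $\pm 1$, so the restriction $\lambda|_{T^{J_v}}\colon T^{J_v}\to T^n$ is an isomorphism. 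Consequently $T^{J_v}\cap T_\lambda=\{1\}$, and a fortiori $T^{J_F}\cap T_\lambda=\{1\}$.

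Second, the free action gives a homotopy equivalence $ET^m\times_{T_\lambda}\calZ_P\to \calZ_P/T_\lambda=M(\lambda)$, since $ET^m$ is contractible and $T_\lambda$ acts freely on the product. Taking the further quotient by $T^m/T_\lambda\cong T^n$ (the isomorphism being induced by $\lambda$) produces exactly $\calB_{T^m}(\calZ_P)\simeq \calB_{T^n}(M(\lambda))$, and this equivalence lies over $BT^n$ via the classifying map $B\lambda\colon BT^m\to BT^n$. This is the middle horizontal arrow in the displayed diagram, now verified to be a weak equivalence of fibrations over $BT^n$.

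Third, the right-hand column $M(\lambda)\to \calB_{T^n}(M(\lambda))\to BT^n$ is by construction a fiber bundle, so its homotopy fiber is $M(\lambda)$. Combining this with the homotopy equivalence $DJ_P\simeq \calB_{T^m}(\calZ_P)$ over $BT^m$ — which, by the homotopy colimit description of $DJ_P$, makes the triangle between $DJ_P$, $\calB_{T^m}(\calZ_P)$, and $BT^m$ commute up to homotopy — the composition $DJ_P\simeq\calB_{T^m}(\calZ_P)\simeq\calB_{T^n}(M(\lambda))\to BT^n$ is homotopic to $B\lambda\circ\mathrm{incl}$. Therefore $M(\lambda)$ is the homotopy fiber of $B\lambda\circ\mathrm{incl}$. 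The main subtlety lies in keeping track of the base maps so that the various equivalences really commute up to homotopy over $BT^n$; once the diagram has been set up, the conclusion is formal.
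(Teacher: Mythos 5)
Your proposal is correct and follows essentially the same route as the paper, which obtains the theorem by assembling the displayed homotopy commutative diagram: the free $T_\lambda$-action, the induced equivalence $\calB_{T^m}(\calZ_P)\simeq\calB_{T^n}(M(\lambda))$ over $BT^n$, and the homotopy-colimit equivalence $DJ_P\simeq\calB_{T^m}(\calZ_P)$ over $BT^m$. You merely spell out the details (notably the freeness argument via the nonsingularity condition at a vertex) that the paper delegates to Construction \ref{const:moment-angle mfd} and to Davis--Januszkiewicz.
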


Since it is also shown by Davis and Januszkiewicz (in the proof of \cite[Theorem 3.1]{DJ91}) that any quasitoric manifold has a CW structure without odd dimensional cells,
we immediately obtain the following corollary.

\begin{cor}[Davis and Januszkiewicz]\label{cor:cohomology ring of a quasitoric manifold}
Let $P$ be an $n$-dimensional simple polytope with $m$ facets, $\lambda=(\lambda_{i,j})$ be a characteristic matrix on $P$, and put $M:=M(\lambda)$.
Then the integral cohomology ring of $M$ is given by
\[ H^*(M;\bbZ)=\bbZ[v_1,\ldots,v_m] /(\calI_P+\calJ_{\lambda}).\]
Here $v_i:=j^* t_i \in H^2(M;\bbZ)\,(i=1,\ldots,m)$ where $j\co M\rightarrow DJ_P$ is the inclusion of fiber and $t_i$'s are the canonical basis of $H^2(DJ_P;\bbZ)$, and $\calI_P$, $\calJ_{\lambda}$ are the ideals below:
\begin{align*}
\mathcal{I}_P &= (v_{i_1}\cdots v_{i_k}\,\vert \, F_{i_1}\cap \ldots \cap F_{i_k}=\emptyset ),\\
\mathcal{J}_{\lambda} &= (\lambda_{i,1}v_1+\cdots +\lambda_{i,m}v_m\, \vert \, i=1,\ldots,n).
\end{align*}
\end{cor}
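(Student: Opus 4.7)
The plan is to combine Theorem \ref{thm:qt mfd is a homotopy fiber} with the Leray--Serre spectral sequence, after first identifying $H^*(DJ_P;\bbZ)$ with the Stanley--Reisner ring $\bbZ[t_1,\ldots,t_m]/\calI_P$. Concretely, the whole computation is driven by the fibration
\[ M \longrightarrow \calB_{T^n}(M) \longrightarrow BT^n \]
together with the homotopy equivalence $DJ_P \simeq \calB_{T^m}(\calZ_P) \simeq \calB_{T^n}(M)$ displayed in the diagram just before the theorem, which lets us transport cohomological information between $DJ_P$ and $\calB_{T^n}(M)$.

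The first step is to compute $H^*(DJ_P;\bbZ)=\bbZ[t_1,\ldots,t_m]/\calI_P$, where $t_i$ is the pullback of the canonical generator of $H^2(\bbC P^\infty)$ along the $i$-th projection $BT^m\to\bbC P^\infty$. Because $DJ_P=\bigcup_{J\in K_P}BT^J$ and each $BT^J\subset BT^m$ is the coordinate subspace corresponding to $J$, a monomial $t_{i_1}\cdots t_{i_k}$ restricts to zero on every $BT^J$ that does not contain $\{i_1,\ldots,i_k\}$, hence vanishes on $DJ_P$ exactly when $F_{i_1}\cap\cdots\cap F_{i_k}=\emptyset$; the reverse inclusion can be obtained by a Mayer--Vietoris or homotopy colimit argument on the cover by the $BT^J$. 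This gives the Stanley--Reisner description. Under the equivalence above, the same formula computes $H^*(\calB_{T^n}(M);\bbZ)$.

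Next I identify the map induced by $B\lambda\circ\mathrm{incl}$ on cohomology: since $\lambda\co T^m\to T^n$ has matrix $(\lambda_{i,j})$, the dual map on $H^2$ sends the $i$-th standard generator $u_i\in H^2(BT^n;\bbZ)$ to $\sum_j\lambda_{i,j}t_j\in H^2(BT^m;\bbZ)$, and hence the composite $DJ_P\hookrightarrow BT^m\to BT^n$ sends $u_i$ to the same element inside $H^*(DJ_P;\bbZ)=\bbZ[t_1,\ldots,t_m]/\calI_P$. The ideal generated by these images is precisely $\calJ_\lambda$.

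Finally I run the Serre spectral sequence of the fibration $M\to\calB_{T^n}(M)\to BT^n$. Since $M$ admits a CW structure with only even-dimensional cells and $H^*(BT^n;\bbZ)$ is concentrated in even degrees, the $E_2$-page is concentrated in even total degree and so the sequence collapses for dimensional reasons; no extension problems arise because everything is torsion-free in even degrees. In particular $H^*(\calB_{T^n}(M);\bbZ)$ is a free module over $H^*(BT^n;\bbZ)$ whose quotient by $B\lambda^*H^{>0}(BT^n;\bbZ)$ is $H^*(M;\bbZ)$, with the identification of $v_i=j^*t_i$ built into the equivalence $DJ_P\simeq\calB_{T^n}(M)$. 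Substituting the Stanley--Reisner presentation and the formula for the images of the $u_i$ yields the stated presentation $H^*(M;\bbZ)=\bbZ[v_1,\ldots,v_m]/(\calI_P+\calJ_\lambda)$. The only genuine subtlety is justifying that the collapse of the spectral sequence upgrades to a \emph{ring} isomorphism with trivial module extension; this follows from the evenness of the cohomology (no odd-degree classes to produce higher $\mathrm{Ext}$ contributions) and the fact that the fiber restriction $j^*$ is already surjective by the Stanley--Reisner description of $H^*(DJ_P;\bbZ)$.
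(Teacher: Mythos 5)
Your proposal is correct and follows essentially the same route the paper takes: the paper derives this corollary "immediately" from Theorem \ref{thm:qt mfd is a homotopy fiber} together with the Davis--Januszkiewicz fact that $M$ has a CW structure with only even-dimensional cells, and your write-up simply supplies the standard Stanley--Reisner computation of $H^*(DJ_P)$ and the Serre spectral sequence collapse that the paper leaves to \cite{DJ91}. One small misattribution: surjectivity of $j^*$ is not a consequence of the Stanley--Reisner description of $H^*(DJ_P)$ but of the collapse itself (the edge homomorphism onto $E_\infty^{0,*}=E_2^{0,*}$), which you have already secured from the evenness of fiber and base, so the conclusion stands.
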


\begin{lem}\label{generators of cohomology}
For each $i=1,\ldots,m$, the generator $v_i\in H^2(M;\bbZ)$ of Corollary $\ref{cor:cohomology ring of a quasitoric manifold}$ is equal to the Poincar\'{e} dual of the submanifold $M_i:=\pi^{-1}(F_i)$ .
\end{lem}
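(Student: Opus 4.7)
The plan is to realize $M_i$ as the transverse zero locus of a smooth section of a complex line bundle on $M$ whose Euler class is $v_i$; the standard identification of such an Euler class with the Poincar\'e dual of the zero locus then yields the claim. From $M=M(\lambda)=\calZ_P/T_\lambda$ in Construction \ref{const:moment-angle mfd}, the submanifold $M_i=\pi^{-1}(F_i)$ corresponds to the image in $M$ of
\[\calZ_i:=\{(z_1,\ldots,z_m)\in\calZ_P\mid z_i=0\},\]
since the embedding $\varepsilon$ sends points of $F_i$ to tuples with vanishing $i$-th coordinate.

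Next I would introduce the relevant line bundle. Let $\chi_i\co T^m\to S^1$ be the $i$-th coordinate projection and let $\bbC_i$ denote $\bbC$ equipped with the $T_\lambda$-action through $\chi_i|_{T_\lambda}$. Set
\[L_i:=\calZ_P\times_{T_\lambda}\bbC_i\longrightarrow M.\]
The $T_\lambda$-equivariant map $\calZ_P\to\bbC_i,\ z\mapsto z_i$, descends to a smooth section $s_i\co M\to L_i$ whose zero set is exactly $M_i$. The smooth atlas on $M(\lambda)\cong M(\ell_\lambda)$ provided by Construction \ref{const:ch map} furnishes, near any point of $M_i$, a chart in which $M_i$ becomes $\{0\}\times\bbC^{n-1}$ and $s_i$ agrees up to a nonvanishing factor with the first coordinate. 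Hence $s_i$ is transverse to the zero section, so $M_i$ is a smooth codimension-$2$ submanifold and its Poincar\'e dual in $H^2(M;\bbZ)$ equals $e(L_i)$.

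The last step is to show $e(L_i)=v_i=j^*t_i$. By construction $t_i\in H^2(BT^m;\bbZ)$ is the Euler class of the universal line bundle $ET^m\times_{T^m}\bbC_i$ over $BT^m$. Under the homotopy equivalence $DJ_P\simeq\calB_{T^m}(\calZ_P)$ compatible with the projection to $BT^m$ used in Theorem \ref{thm:qt mfd is a homotopy fiber}, the fibre inclusion $j\co M\hookrightarrow DJ_P$ is identified with the inclusion of the fibre of $\calB_{T^m}(\calZ_P)\to BT^n$ over the basepoint, which is precisely $\calZ_P/T_\lambda=M$. Pulling the universal line bundle back along this composite recovers $\calZ_P\times_{T_\lambda}\bbC_i=L_i$, so $v_i=e(L_i)$, and the previous paragraph identifies this with the Poincar\'e dual of $M_i$.

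The main obstacle I expect is this last bookkeeping: one has to track the line bundle carefully through the homotopy equivalence $DJ_P\simeq\calB_{T^m}(\calZ_P)$ and verify that restriction to the fibre of the map to $BT^n$ genuinely produces $L_i$ rather than a twist by a character of $T^m$ that happens to be trivial on $T_\lambda$. Once that identification is secured, the transversality check is a direct local computation in the quasitoric charts and the standard Euler-class-equals-Poincar\'e-dual principle finishes the proof with no further calculation.
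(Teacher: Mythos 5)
Your proposal is correct and is essentially the paper's argument in different packaging: both identify $v_i$ with the characteristic class of the complex line bundle attached to the $i$-th coordinate of $\calZ_P\subseteq(D^2)^m$, descended through the free $T_\lambda$-action and compared with the universal class $t_i$ via the Borel construction $\calB_{T^m}(\calZ_P)\simeq DJ_P$. The only difference is that you phrase the final step as ``Euler class of a transversally vanishing section equals the Poincar\'e dual of its zero locus,'' whereas the paper transports the Thom class of the universal disc bundle $\calB_{T^1}(\mathrm{Int}\,D^2)\to BT^1$ through a commutative diagram to the Thom class of the normal bundle $\nu(M_i)=\nu(X)/T_\lambda$ --- two standard formulations of the same fact, and the bookkeeping you flag (that the restriction of the universal bundle to the fibre really is $L_i$) is exactly what the paper's diagram of bundle maps records.
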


We make some preparations before the proof of this lemma.
For the sake of simplicity, we make the following conventions.
\begin{itemize}
\item Unless otherwise mentioned, a space means a Hausdorff space and a map means a continuous map.
An action is also assumed to be continuous.
\item A structure group $\frF$ of a fiber bundle with fiber $F$ is always assumed to act on $F$ effectively.
Moreover, $\frF$ is assumed to have the following property: for a space $X$ and a possibly non-continuous map $f\co X \to \frF$, $f$ is continuous if the map $X\times F\to F$ defined by $(x,y)\mapsto f(x)\cdot y$ is continuous.
\end{itemize}

\begin{df}\label{df:equivariant bundle}
Let $G$, $\frF$ be two topological groups and regard that $\frF$ acts on a space $F$.
A \textit{$G$-equivariant fiber bundle with fiber $F$ and structure group $\frF$} is a map $p\co E\to B$ between $G$-spaces satisfying the following conditions:
\begin{itemize}
\item[(i)] $p$ is a fiber bundle with fiber $F$ and structure group $\frF$;
\item[(ii)] $p$ is $G$-equivariant;
\item[(iii)] for each $g\in G$ and $x\in B$, if we take local trivializations $\phi\co U\times F\to p^{-1}(U)$ and $\phi'\co U'\times F\to p^{-1}(U')$ around $x$ and $g x$ respectively, then there exists $f\in\frF$ such that the following diagram commutes.
\[\xymatrix{
p^{-1}(x) \ar[r]^g & p^{-1}(g x) \\
\{x\}\times F \ar[u]^{\phi|_{\{x\}\times F}} \ar[r]^{g\times f} & \{g x\}\times F \ar[u]_{\phi'|_{\{g x\}\times F}}
}\]
\end{itemize}

If $G$ is a Lie group, then a $G$-equivariant fiber bundle is called smooth if it is smooth as a fiber bundle, and the $G$-actions on the total space and the base space are smooth.
Here we say a fiber bundle is smooth if the fiber, the total space, and the base space are differentiable manifolds and the local trivializations can be chosen to be diffeomorphisms.
\end{df}


\begin{lem}\label{lem:equivariant bundle}
Let $G$ be a topological group, $H$ be a closed normal subgroup of $G$, $p\co E\to P$ be a $G$-equivariant fiber bundle with fiber $F$ and structure group $\frF$, and put $\overline{E}:=E/H$, $B:=P/H$.
If the quotient map $q\co P\to B$ is a principal $H$-bundle, then $\bar{p}\co\overline{E}\to B$ induced by $p$ can be equipped with a $G/H$-equivariant fiber bundle structure with fiber $F$ and structure group $\frF$ so that the quotient map $\tilde{q}\co E\to\overline{E}$ is a bundle map covering $q$.
\end{lem}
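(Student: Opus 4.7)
The plan is to construct local trivializations of $\bar{p}$ by combining local sections of $q$ with local trivializations of $p$ restricted along those sections. Fix $\bar{x}\in B$ and a neighborhood $U\subseteq B$ of $\bar{x}$ over which $q$ trivializes; this gives a section $s\co U\to P$ such that every point of $q^{-1}(U)$ is uniquely expressed as $h\cdot s(u)$ with $(u,h)\in U\times H$. After shrinking $U$ if necessary, pick a local trivialization $\phi_s\co U\times F\to p^{-1}(s(U))$ of $p$ along $s(U)$. Since $p$ is $G$-equivariant and $H$ acts freely on $P$, the formula $\tilde{\phi}(u,h,y):=h\cdot \phi_s(u,y)$ defines an $H$-equivariant homeomorphism $\tilde{\phi}\co U\times H\times F\to p^{-1}(q^{-1}(U))$, where $H$ acts only on the middle factor of the source. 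Taking the $H$-quotient yields the candidate homeomorphism $\bar{\phi}\co U\times F\to \bar{p}^{-1}(U)$.

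To see that these charts give a fiber bundle with structure group $\frF$, consider two such charts built from sections $s_1,s_2$ over overlapping neighborhoods $U_1,U_2$, and write $s_2(u)=h(u)\cdot s_1(u)$ for a continuous map $h\co U_1\cap U_2\to H$. Along the fiber over $s_1(u)$, the comparison between $\phi_{s_1}(u,\cdot)$ and $h(u)^{-1}\cdot\phi_{s_2}(u,\cdot)$ is, by condition (iii) of Definition \ref{df:equivariant bundle} applied to $h(u)^{-1}\in H\subseteq G$, the action of a uniquely determined element $f(u)\in\frF$. Thus the transition function for the $\bar{\phi}$'s reads $(u,y)\mapsto(u,f(u)\cdot y)$, and continuity of $f$ follows from the standing assumption on structure groups imposed just before Definition \ref{df:equivariant bundle}. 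Hence $\bar{p}$ inherits a fiber bundle structure with fiber $F$ and structure group $\frF$.

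For the $G/H$-equivariance, normality of $H$ in $G$ guarantees that the $G$-actions on $E$ and $P$ descend to $G/H$-actions on $\overline{E}$ and $B$, and $\bar{p}$ is $G/H$-equivariant by construction. Local condition (iii) of Definition \ref{df:equivariant bundle} for $\bar{p}$ follows by combining (iii) for $p$ with an adjustment by an $H$-valued function arising from writing $g\cdot s_1(\bar{x})=h'\cdot s_2(gH\cdot \bar{x})$ with $h'\in H$; this is handled by exactly the same argument as for the transition functions. Finally, under the trivializations $\tilde{\phi}$ and $\bar{\phi}$ the quotient map $\tilde{q}\co E\to\overline{E}$ reads $(u,h,y)\mapsto(u,y)$ locally, which exhibits it as a bundle map covering $q$.

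The main obstacle I anticipate is the second step: one must verify that the $H$-valued transition $h(u)$ between local sections of $q$ combines with the structure-group cocycle of $p$ into a continuous $\frF$-valued cocycle for $\bar{p}$, and continuity of the resulting $f(u)$ relies crucially on the convention on structure groups stated just before Definition \ref{df:equivariant bundle}. The remaining verifications are formal consequences of the freeness of the $H$-action, the local triviality of $p$, and the normality of $H$ in $G$.
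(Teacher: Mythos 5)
Your proposal is correct and follows essentially the same route as the paper: build local trivializations of $\bar{p}$ from local sections $s$ of the principal $H$-bundle $q$ together with local trivializations of $p$ near $s(U)$, then use condition (iii) of Definition \ref{df:equivariant bundle} applied to elements of $H$ (for the transition functions) and to general $g$ adjusted by an element of $H$ (for the $G/H$-equivariance), with continuity of the $\frF$-valued functions supplied by the standing convention on structure groups. The only cosmetic difference is that you pass through the intermediate homeomorphism $U\times H\times F\cong p^{-1}(q^{-1}(U))$ and take the $H$-quotient, whereas the paper verifies directly that $(x,y)\mapsto\tilde{q}\circ\beta(s(x),y)$ is a homeomorphism using openness of $\tilde{q}$.
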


\begin{proof}
To summarize the setting of the lemma, we have the following commutative diagram.
\[\xymatrix{
F \ar[r] & E \ar[r]^{p} \ar[d]_{\tilde{q}} & P \ar[d]^{q} \\
& \overline{E} \ar[r]^{\bar{p}} & B
}\]
Let $\calA$ be the set consisting of triads $(U,s,\beta)$ where $U$ is an open subset of $B$, $s$ is a section of $q\co q^{-1}(U)\to U$, and $\beta\co V\times F\to p^{-1}(V)$ is a local trivialization of $p$ such that $s(U)\subseteq V$.
If we define $\phi_\alpha\co U\times F\to \bar{p}^{-1}(U)$ by $\phi_\alpha(x,y):=\tilde{q}\circ\beta(s(x),y)$ for each $\alpha=(U,s,\beta)\in\calA$, then it is clearly bijective.
Note that, since $\overline{E}$ is a quotient by a group action, $\tilde{q}$ is an open map and restricts to a quotient map $\tilde{q}^{-1}(W)\to W$ for any open subset $W$ of $\overline{E}$.
Since $\beta\circ(s\times\id_F)$ is a topological embedding and $\tilde{q}\circ\beta\circ(s\times\id_F)\circ\phi_\alpha^{-1}=\id_{\bar{p}^{-1}(U)}$ is continuous, $\phi_\alpha^{-1}$ is also continuous.
Thus we see that $\bar{p}$ is a fiber bundle with fiber $F$.

Next, let us show that the transition functions associated with the local trivializations $\{(U,\phi_\alpha)\}_{\alpha\in\calA}$ take values in $\frF$.
Take $\alpha=(U,s,\beta),\, \alpha'=(U',s',\beta')\in\calA$ and assume $U\cap U'\neq \emptyset$.
Due to the second convention made before Definition \ref{df:equivariant bundle}, we only have to show that for each $x\in U\cap U'$ there exists $f\in\frF$ such that $\phi_\alpha(x,y)=\phi_{\alpha'}(x,f\cdot y)$ for any $y\in F$.
Fix $x\in U\cap U'$ and take $h\in H$ such that $s'(x)=h\cdot s(x)$.
Since $p$ is a $G$-equivariant fiber bundle, there exists $f\in\frF$ such that $h\cdot\beta(s(x),y)=\beta'(s'(x),f\cdot y)$ for any $y\in F$.
Then, since $\tilde{q}(h\cdot\beta(s(x),y))=\tilde{q}(\beta(s(x),y))$, we have $\phi_\alpha(x,y)=\phi_{\alpha'}(x,f\cdot y)$ for any $y\in F$.
Thus we see that $\bar{p}$ is a fiber bundle with structure group $\frF$.

Finally, we show that the condition (iii) of Definition \ref{df:equivariant bundle} holds for $\bar{p}$.
Fix $g\in G$, $x\in B$ and take $\alpha=(U,s,\beta),\, \alpha'=(U',s',\beta')\in\calA$ so that $x\in U$, $gx\in U'$.
We can take $h\in H$ such that $s'(g x)=h\cdot (g\cdot s(x))$.
If we put $g':=h g$, since $p$ is a $G$-equivariant fiber bundle, there exists $f\in \frF$ such that $g'\cdot\beta(s(x),y)=\beta'(s'(g x),f\cdot y)$ for any $y\in F$.
Then, since $G$ acts on $\overline{E}$ via $G/H$, we have $g\cdot\phi_\alpha(x,y)=\phi_{\alpha'}(g x,f\cdot y)$.
Thus the proof is completed.
\end{proof}


\begin{proof}[proof of Lemma \ref{generators of cohomology}]
Fix $i\in\{1,\ldots,m\}$ and let $X$ be the inverse image of $M_i$ under the quotient map from $\calZ_P$ to $M=M(\lambda)$.
Then $X=\{(z_1,\ldots,z_m)\in\calZ_P\,\vert\,z_i=0\}$ (see Construction \ref{const:moment-angle mfd}).
We define a normal bundle $\nu(X)$ of $X$ in $\calZ_P$ by $\nu(X):=\{(z_1,\ldots,z_m)\in\calZ_P\,\vert\,|z_i|<1\}$ where the projection $\nu(X)\to X$ is given by $(z_1,\ldots,z_m)\mapsto (z_1,\ldots,z_{i-1},0,z_{i+1},\ldots,z_m)$.
Then $\pr_i\co \calZ_P\to D^2$ restricts to a bundle map $\nu(X) \to \mathrm{Int}\,D^2$ covering $X\to\{0\}$.
By Lemma \ref{lem:equivariant bundle}, since $\nu(X)$ is a $T^m$-equivariant vector bundle and $\calZ_P\to M(\lambda)$ is a principal $T_\lambda$-bundle, $\nu(M_i):=\nu(X)/T_\lambda$ gives a $T^n$-equivariant normal bundle of $M_i$ in $M$.
Moreover, by using Lemma \ref{lem:equivariant bundle} again, we see that $\calB_{T^n}(\nu(M_i))\to \calB_{T^n}(M_i)$ and $\calB_{T^m}(\nu(X))\to \calB_{T^m}(X)$ also have vector bundle structures.
Thus we have the following diagram where each square is a bundle map.
\[\xymatrix{
\nu(M_i) \ar[r] \ar[d] & \calB_{T^n}(\nu(M_i)) \ar[d] & \calB_{T^m}(\nu(X)) \ar[d] \ar[l] \ar[r] & \calB_{T^1}(\mathrm{Int}\,D^2) \ar[d] \\
M_i \ar[r] & \calB_{T^n}(M_i) & \calB_{T^m}(X) \ar[l] \ar[r] & B T^1
}\]

Then, let us put $(A,B)^c:=(A,A\setminus B)$, $\calB_{T^k}(A,B):=(\calB_{T^k}A,\calB_{T^k}B)$ for a pair $(A,B)$ of $T^k$-spaces, and consider the following commutative diagram.
\[\xymatrix{
H^2(\calB_{T^1}(D^2,\{0\})^c) \ar[d]_{r_1} \ar[r]^{\pr_i^*} & H^2(\calB_{T^m}(\calZ_P,X)^c) \ar[d] & H^2(\calB_{T^n}(M,M_i)^c) \ar[r] \ar[d] \ar[l]_{\cong} & 
H^2((M,M_i)^c) \ar[d]^{r_2} \\
H^2(\calB_{T^1} (D^2)) \ar[r]^{\pr_i} & H^2(\calB_{T^m}(\calZ_P)) & H^2(\calB_{T^n} (M)) \ar[r] \ar[l]_{\cong} & H^2(M) 
}\]
Here $H^*(\,\cdot\,)$ denotes the integral cohomology and each vertical arrow denotes the restriction.
Let us denote the Thom class of $\calB_{T^1}(\mathrm{Int}\,D^2)$ by $\tau$ and regard it as an element of $H^2(\calB_{T^1}(D^2,\{0\})^c)$ through the excision isomorphism.
Moreover, we denote the composite of the upper (resp. lower) horizontal arrows by $\gamma_1$ (resp. $\gamma_2$).
Due to the above diagram of bundle maps, $\gamma_1$ maps $\tau$ to the Thom class of $\nu(M_i)$, and therefore $r_2\circ\gamma_1(\tau)$ is the Poincar\'{e} dual of $M_i$.
Moreover, since $r_1(\tau)$ is the canonical generator of $H^2(\calB_{T^1} (D^2))\cong H^2(B T^1)$, $\gamma_2\circ r_1(\tau)=v_i$.
Thus the proof is completed.
\end{proof}

Note that, with the notation of Definition \ref{df:rep of wehomeo}, a weakly equivariant homeomorphism $f$ maps $\pi^{-1}(F_i)$ to $\pi'^{-1}(F_{\sigma\!_f(i)})$ for each $i=1,\ldots,m$, where $\pi$ (resp. $\pi'$) denotes the projection from $M(\lambda)$ (resp. $M(\lambda')$) to $P$.
By taking into account the orientations of the normal bundles, we have the following.

\begin{cor}\label{induced by rep}
Let $\lambda$, $\lambda'$ be two characteristic matrices on $P$ and $f\co M(\lambda)\to M(\lambda')$ be a weakly equivariant homeomorphism represented by $(\psi,\rho)\in GL_n(\bbZ)\times R(P)$.
Then, if we take generators $v_1,\ldots,v_m\in H^*(M(\lambda);\bbZ)$ and $v'_1,\ldots,v'_m\in H^*(M(\lambda');\bbZ)$ as in Corollary $\ref{cor:cohomology ring of a quasitoric manifold}$, we have
$$ f^*(v'_1, \ldots , v'_m)=(v_1, \ldots, v_m) \cdot \iota(\rho)^{-1}.$$
\end{cor}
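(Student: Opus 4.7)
The plan is to apply Lemma \ref{generators of cohomology}, which identifies each $v_i$ with the Poincar\'e dual of $M_i=\pi^{-1}(F_i)$, together with the remark immediately preceding the corollary that $f$ sends $M_i$ to $M'_{\sigma(i)}$, where $\sigma:=p(\rho)$. Once these identifications are in place, the whole proof reduces to a careful orientation bookkeeping on normal bundles.

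First, since $f$ is a homeomorphism, it is automatically transverse to every $M'_k$ with preimage $f^{-1}(M'_k)=M_{\sigma^{-1}(k)}$, so naturality of the Poincar\'e dual yields
\[
f^*(v'_k)=\varepsilon_k\cdot v_{\sigma^{-1}(k)}
\]
for some sign $\varepsilon_k\in\{\pm1\}$ equal to the fibrewise degree of the induced map $df\co\nu(M_{\sigma^{-1}(k)})\to\nu(M'_k)$ of normal bundles.

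Next, to pin down $\varepsilon_k$ I would revisit the orientation used in the proof of Lemma \ref{generators of cohomology}: the orientation of $\nu(M_i)$ is inherited, via $\mathrm{pr}_i\co\calZ_P\to D^2$ and the quotient by $T_\lambda$, from the standard orientation of $\mathbb{C}$. Equivalently it is the orientation determined by the infinitesimal rotation generator $\lambda_i\in\bbZ^n$ of the isotropy circle $\ell(F_i)=\langle\lambda_i\rangle$. Because $f$ is $\psi$-equivariant, the map $df$ on normal fibres intertwines the rotation action of $\lambda_i$ with that of $\psi\lambda_i$ on the fibre of $\nu(M'_{\sigma(i)})$, whose orientation is similarly determined by $\lambda'_{\sigma(i)}$. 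Expanding $\lambda'=\psi\lambda\iota(\rho)$ from Definition \ref{df:rep of wehomeo}(iii) using the explicit form of $\iota(\rho)$ from Definition \ref{df:iota} gives $\psi\lambda_i=\sgn_i(\rho)\cdot\lambda'_{\sigma(i)}$ as vectors in $\bbZ^n$, so $df$ preserves fibre orientation precisely when $\sgn_i(\rho)=+1$. Hence $\varepsilon_{\sigma(i)}=\sgn_i(\rho)$, giving the pointwise identities
\[
f^*(v'_{\sigma(i)})=\sgn_i(\rho)\cdot v_i\qquad(i=1,\ldots,m).
\]

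Finally, I would package these identities into the single row-vector equation in the statement. The antihomomorphism property gives $\iota(\rho)^{-1}=\iota(\rho^{-1})$, and an entry-by-entry check using Definition \ref{df:iota} matches the identities above against the $k$-th entry of $(v_1,\ldots,v_m)\cdot\iota(\rho)^{-1}$ after the substitution $k=\sigma(i)$.

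The hardest step will be the orientation computation in the middle: one has to verify that the sign $\sgn_i(\rho)$, rather than just its absolute value, genuinely measures the fibrewise degree of $df$ on normal bundles rather than being absorbed into the ambiguity of choosing a generator for the isotropy circle. The canonical choice of Lie algebra generator coming from $\mathrm{pr}_i$ in the proof of Lemma \ref{generators of cohomology} is exactly what makes the sign meaningful, and the $\bbZ/2$-equivariance built into Definition \ref{df:iota} is what encodes it on the combinatorial side.
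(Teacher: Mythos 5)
Your geometric argument is the one the paper intends (the paragraph preceding the corollary is exactly your first two steps), and the pointwise identities you derive are correct: $f^{-1}(M'_k)=M_{\sigma^{-1}(k)}$ for $\sigma:=p(\rho)$, and expanding $\lambda'=\psi\lambda\iota(\rho)$ does give $\psi\lambda_i=\sgn_i(\rho)\,\lambda'_{\sigma(i)}$, which pins down the fibrewise degree on normal bundles exactly as you say, yielding $f^*(v'_{\sigma(i)})=\sgn_i(\rho)\,v_i$.

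The problem is the final step, which you defer to an ``entry-by-entry check'': that check does not come out the way you assert. By Definition \ref{df:iota}, the $k$-th entry of $(v_1,\ldots,v_m)\cdot\iota(\rho)$ is $\sum_j v_j\,\sgn_j(\rho)\,\delta_{k,\sigma(j)}=\sgn_{\sigma^{-1}(k)}(\rho)\,v_{\sigma^{-1}(k)}$, which is precisely your $f^*(v'_k)$; whereas the $k$-th entry of $(v_1,\ldots,v_m)\cdot\iota(\rho)^{-1}=(v_1,\ldots,v_m)\cdot\iota(\rho^{-1})$ is $\sgn_k(\rho)\,v_{\sigma(k)}$. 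So your identities assemble into $f^*(v'_1,\ldots,v'_m)=(v_1,\ldots,v_m)\cdot\iota(\rho)$, and this differs from the displayed formula whenever $\rho^2\neq\mathrm{id}$; the substitution $k=\sigma(i)$ cannot absorb the discrepancy. You can test which version is compatible with the ring structure on a Hirzebruch surface over $I^2$ with $\rho$ the order-four rotation of the square ($\sigma=(1\,4\,3\,2)$, all signs $+1$): for $\lambda=(e_1,e_2,e_1,ae_1+e_2)$ and $\lambda'=\lambda\iota(\rho)$, the assignment $v'_k\mapsto v_{\sigma(k)}$ sends the relation $v'_2+av'_3+v'_4=0$ to $v_1+av_2+v_3=-2av_4\neq 0$, so it is not even a ring homomorphism, while $v'_k\mapsto v_{\sigma^{-1}(k)}$ is. As far as I can check, no inverse was lost in your orientation bookkeeping, so you should state the conclusion your argument actually proves and flag the mismatch with the displayed formula rather than assert the match; note that every application of the corollary in Section 3 that is evaluated explicitly involves an involutive $\rho$, for which $\iota(\rho)=\iota(\rho)^{-1}$ and the distinction disappears.
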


To close this subsection, we introduce two theorems which we will use for the classification of quasitoric manifolds over $I^3$.

\begin{thm}[{\cite[Corollary 6.8]{DJ91}}]\label{thm:characteristic classes of a quasitoric manifold}
With the notation in Corollary $\ref{cor:cohomology ring of a quasitoric manifold}$, we have the following formulae for the total Stiefel-Whitney class and the total Pontrjagin class:
\begin{align*}
w(M) &= \prod_{i=1}^m (1+v_i),\\
p(M) &= \prod_{i=1}^m (1-v_i^2).
\end{align*}
\end{thm}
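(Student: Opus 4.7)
The plan is to derive both identities from a single stable tangent bundle decomposition
\[ TM \oplus \underline{\bbR}^{2(m-n)} \cong \bigoplus_{i=1}^m L_i, \]
where $\underline{\bbR}^k$ denotes the trivial real bundle of rank $k$ and each $L_i$ is a complex line bundle on $M$ with $c_1(L_i) = v_i$ in the sense of Corollary \ref{cor:cohomology ring of a quasitoric manifold}. Once such a splitting is in hand, the Whitney sum formula applied to the total Stiefel-Whitney class and to the total Pontrjagin class, combined with $w(L_i) = 1 + v_i$ and $p(L_i) = 1 - c_1(L_i)^2 = 1 - v_i^2$ (viewing $L_i$ as an oriented real $2$-plane bundle) together with the vanishing of the reduced $w$ and $p$ on the trivial summand, yields exactly the stated formulas.

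To construct the $L_i$, I would take the composition $j \co M \to DJ_P \hookrightarrow BT^m$ furnished by Theorem \ref{thm:qt mfd is a homotopy fiber} and define $L_i$ as the pullback of the $i$-th canonical complex line bundle on $BT^m$; by construction $c_1(L_i) = j^* t_i = v_i$. For the stable splitting I would realize $M$ as $\calZ_P / T_\lambda$ and exploit the inclusion $\calZ_P \subset (D^2)^m \subset \bbC^m$. Because $\calZ_P$ is cut out of $\bbC^m$ by $m-n$ equations involving only the moduli $|z_i|$ that are transverse to $\calZ_P$, its normal bundle in $\bbC^m$ is trivial of real rank $m-n$, giving a $T^m$-equivariant identification
\[ T\calZ_P \oplus \underline{\bbR}^{m-n} \cong \bbC^m|_{\calZ_P} = \bigoplus_{i=1}^m \tilde{L}_i, \]
where $\tilde{L}_i$ is the trivial complex line bundle on $\calZ_P$ carrying the weight-$e_i$ $T^m$-action. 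On the other hand, the principal $T_\lambda$-bundle $\calZ_P \to M$ has trivial vertical tangent bundle $\mathrm{Lie}(T_\lambda) \otimes \underline{\bbR} \cong \underline{\bbR}^{m-n}$, so $T\calZ_P \cong \pi^* TM \oplus \underline{\bbR}^{m-n}$. Combining these two identities and using Lemma \ref{lem:equivariant bundle} to descend each $T^m$-equivariant factor $\tilde{L}_i$ to a complex line bundle on $M$ (the $T_\lambda$-action on $\calZ_P$ being free) yields the desired stable decomposition.

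The step I expect to be most delicate is identifying the descended line bundle $\tilde{L}_i / T_\lambda$ with the $L_i$ defined via $j$, so that the first Chern classes really are $v_i$. I would verify this by restricting to $M_i = \pi^{-1}(F_i)$: the summand $\tilde{L}_i$ restricts to the $T^m$-equivariant normal bundle of $\pi^{-1}(F_i) \cap \calZ_P$ in $\calZ_P$, which by Lemma \ref{lem:equivariant bundle} descends to the $T^n$-equivariant normal bundle $\nu(M_i)$ appearing in the proof of Lemma \ref{generators of cohomology}. Since that lemma shows the Poincar\'e dual of $[M_i]$ is $v_i$, the descended line bundle has first Chern class $v_i$, pinning down the stable splitting in $K$-theory and validating the characteristic class computation.
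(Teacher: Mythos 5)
The paper itself offers no proof of this statement: it is quoted directly as \cite[Corollary 6.8]{DJ91}, so there is no internal argument to compare against. Your proposal is essentially the standard proof of that cited result: the stable splitting $TM\oplus\underline{\bbR}^{2(m-n)}\cong\bigoplus_{i=1}^m L_i$ with $c_1(L_i)=v_i$ is \cite[Theorem 6.6]{DJ91} (in the moment-angle-manifold formulation of \cite{BP02}), and the two formulae then follow from the Whitney sum formula exactly as you say. Two steps deserve more care than your sketch gives them. First, the model of $\calZ_P$ used in this paper, $\bigcup_{J\in K_P}(D^2,S^1)^J\subseteq (D^2)^m$, is not literally a smooth submanifold of $\bbC^m$; to trivialize the stable normal bundle you must pass to the $T^m$-equivariantly diffeomorphic presentation of $\calZ_P$ as a regular level set of $m-n$ real quadrics $\sum_i \gamma_i|z_i|^2=c$ arising from a presentation of $P$ by affine inequalities, and that identification of smooth structures is a genuine (though standard) step. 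Second, your verification that $c_1(L_i)=v_i$ by restricting $L_i$ to $M_i$ is not quite sufficient as stated: knowing $L_i|_{M_i}\cong\nu(M_i)$ only determines the image of $c_1(L_i)$ in $H^2(M_i)$, and the restriction $H^2(M)\to H^2(M_i)$ need not be injective. The clean argument is that the $T^m$-equivariant section $z\mapsto(z,z_i)$ of $\tilde{L}_i$ descends to a section of $L_i$ on $M$ whose transverse zero locus is exactly $M_i$, so that $c_1(L_i)$ is the Poincar\'e dual of $M_i$, which equals $v_i$ by Lemma \ref{generators of cohomology}. With these two repairs your proof is complete and agrees with the standard one for the cited theorem.
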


\begin{thm}[Jupp's classification of certain $6$-manifolds, \cite{Jup73}]\label{thm:classification theorem of 6-manifolds}
Let $M,N$ be closed, one-connected, smooth $6$-manifolds with torsion-free cohomology.
If a graded ring isomorphism $\alpha\co H^*(N;\bbZ)\to H^*(M;\bbZ)$ preserves the second Stiefel-Whitney classes and the first Pontrjagin classes,
then there exists a homeomorphism $f\co M\to N$ which induces $\alpha$ in cohomology.
\end{thm}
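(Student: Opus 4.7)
The plan is to reduce the classification to producing a tangential homotopy equivalence $f\co M\to N$ and then promoting it to a homeomorphism via simply connected surgery. Both $M$ and $N$ are simply connected, $6$-dimensional, and have torsion-free cohomology, so each admits a CW decomposition with cells only in even dimensions (this follows from Smale's handle-cancellation arguments together with the classification of the relevant handlebody presentations for simply connected $6$-manifolds). The hypothesis that $\alpha$ preserves $w_2$ and $p_1$ will enter only in the second stage, to control tangential information.

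The first step is to build a degree-one homotopy equivalence $f\co M\to N$ realising $\alpha$. The $2$-skeleta $M_{(2)}$ and $N_{(2)}$ are wedges of $2$-spheres indexed by a basis of $H^2$, so $\alpha\vert_{H^2}$ is automatically induced by a map $M_{(2)}\to N_{(2)}$. Extending over the $4$-skeleton amounts to realising attaching maps of $4$-cells in $\pi_3$ of a wedge of $2$-spheres; these are detected by the cup-product pairing $H^2\otimes H^2\to H^4$, so the obstructions vanish because $\alpha$ is a ring isomorphism. The top $6$-cell is then matched using that $\alpha$ sends fundamental class to fundamental class (up to sign), producing a degree-one map $f\co M\to N$ with $f^*=\alpha$; Poincar\'e duality and the Whitehead theorem then make $f$ a homotopy equivalence.

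The second step is to upgrade $f$ to a homeomorphism via topological surgery. Because $\alpha$ preserves $w_2$ and $p_1$, a standard argument shows that $f$ is covered by a stable isomorphism of topological tangent microbundles, so $f$ underlies a normal map with trivial normal invariant. In dimension $6$ with trivial fundamental group the surgery obstruction lies in $L_6(\bbZ)=0$, so the normal bordism can be surgered to an $s$-cobordism from $M$ to $N$. The topological $s$-cobordism theorem then yields a homeomorphism $M\to N$ inducing $\alpha$ in cohomology.

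The main obstacle is the last paragraph: establishing that an $\alpha$ preserving $w_2$ and $p_1$ genuinely produces a stable bundle isomorphism, i.e.\ that the stable topological tangent bundle of a simply connected $6$-manifold with torsion-free cohomology is detected by $(w_2,p_1)$. Concretely one needs that the image of $[M,BSTOP]$ in $H^2(M;\bbZ/2)\oplus H^4(M;\bbZ)$ via $(w_2,p_1)$ is injective on classifying maps of stable tangent bundles, which requires controlling $\pi_k(BSTOP)$ in the low-dimensional range and using that $M$ has no odd-dimensional cells. Once this tangential detection lemma is in place, the vanishing $L_6(\bbZ)=0$ closes the argument.
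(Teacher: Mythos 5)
First, note that the paper does not prove this statement at all: it is quoted verbatim as an external black box from Jupp's 1973 paper \cite{Jup73} (building on Wall's classification of simply connected $6$-manifolds), so there is no internal argument to compare yours against. Judged on its own terms, your outline follows the standard surgery-theoretic template, but it contains several genuine gaps and one outright error. The most basic problem is the opening claim that torsion-free cohomology forces a CW structure with only even-dimensional cells: this is false whenever $b_3(M)\neq 0$ (e.g.\ $S^3\times S^3$ satisfies all the hypotheses), and the theorem as stated allows $b_3\neq 0$. Jupp handles this by first splitting off $b_3/2$ connected summands of $S^3\times S^3$ and reducing to the case $H^3=0$; without that reduction your skeleton-by-skeleton construction does not start. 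More seriously, even in the $H^3=0$ case the first step is not justified: after building a map on the $4$-skeleton realising $\alpha$, the obstruction to extending over the top cell with degree one lives in $\pi_5$ of the target, which is far from trivial, and it is precisely here (not only in the surgery step) that $w_2$ and $p_1$ enter Wall's and Jupp's arguments. Indeed, the cohomology ring alone does \emph{not} determine the homotopy type of such manifolds, so "a ring isomorphism is realised by a homotopy equivalence" cannot be obtained for free from Poincar\'e duality and Whitehead's theorem; you must first know that the map exists.

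In the surgery step you write that the obstruction lies in $L_6(\bbZ)=0$; in fact $L_6(\bbZ)\cong\bbZ/2$ (the Arf invariant). The group that actually vanishes and does the work is $L_7(\bbZ)=0$, which makes $S^{\mathrm{Top}}(N)\to [N,G/\mathrm{Top}]$ injective; the statement needs to be corrected and the argument rerouted through the normal invariant in $[N,G/\mathrm{Top}]$ rather than through "surgering a normal bordism with obstruction in $L_6$." Finally, the step you yourself flag as "the main obstacle" --- that preservation of $(w_2,p_1)$ forces the normal invariant of $f$ to vanish --- is the actual mathematical content of the theorem, and it is not a formality: the $H^2(N;\bbZ/2)$ component of $[N,G/\mathrm{Top}]$ is a codimension-$4$ splitting (Arf) obstruction, not literally $w_2$, and identifying it requires the characteristic-class computations that occupy the bulk of Jupp's paper. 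As it stands the proposal is an accurate road map of where the difficulties lie, but it does not constitute a proof.
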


\subsection{Bundle-type quasitoric manifold}

Given a quasitoric manifold $M$, we denote by $\frD(M)$ the group of smooth automorphisms of $M$ equipped with the compact-open topology (recall that an isomorphism between quasitoric manifolds $(M,\pi)$ and $(M',\pi')$ means an equivariant homeomorphism $f\co M\to M'$ satisfying $\pi'\circ f=\pi$).
The following proposition is immediate from the definition of a smooth equivariant fiber bundle (Definition \ref{df:equivariant bundle}).

\begin{prop}\label{prop:associated torus action}
Let $M_i$ be a quasitoric manifold acted on by $T_i$ ($i=1,2$) and suppose that $p\co M \to M_2$ is a smooth $T_2$-equivariant fiber bundle with fiber $M_1$ and structure group $\frD(M_1)$.
Then there is a unique $T_1$-action on $M$ such that $t_1\cdot\phi(x,y)=\phi(x,t_1 y)$ for any local trivialization $\phi\co U\times M_1\to p^{-1}(U)$ of $p$ and $t_1\in T_1$.
Moreover, this action of $T_1$ on $M$ is smooth and commutes with the action of $T_2$.
\end{prop}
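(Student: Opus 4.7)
The crucial observation is that the structure group $\frD(M_1)$ by definition consists of $T_1$-equivariant diffeomorphisms of $M_1$, so every element of $\frD(M_1)$ commutes with the $T_1$-action on the fiber. This is what makes the fiberwise $T_1$-action globalize.

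My plan is to first define the $T_1$-action locally and check it is independent of trivialization. Given a local trivialization $\phi\co U\times M_1\to p^{-1}(U)$, set $t_1\cdot \phi(x,y):=\phi(x,t_1 y)$. If $\phi'\co U'\times M_1\to p^{-1}(U')$ is another trivialization with $U\cap U'\ni x$, then the transition map $g_{x}:=\phi'^{-1}_x\circ\phi_x$ lies in $\frD(M_1)$ and therefore satisfies $g_x(t_1 y)=t_1 g_x(y)$. Applying $\phi'_x$ to both sides shows that the two candidate definitions of $t_1\cdot \phi(x,y)$ coincide, so the local formulas patch into a well-defined global $T_1$-action on $M$. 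Uniqueness is immediate from the defining formula, since every point of $p^{-1}(U)$ has the form $\phi(x,y)$.

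For smoothness I would invoke the fact that the action on $U\times M_1$ given by $(x,y)\mapsto (x,t_1 y)$ is smooth and that $\phi$ is a diffeomorphism in a smooth bundle chart, so smoothness of $T_1\times M\to M$ follows because it is local. To check that the $T_1$-action commutes with the $T_2$-action I would use condition (iii) of Definition \ref{df:equivariant bundle}: for $t_2\in T_2$ and $z=\phi(x,y)\in p^{-1}(U)$, choose a trivialization $\phi'$ around $t_2\cdot z$, so that there exists $f\in\frD(M_1)$ with $t_2\cdot \phi(x,y)=\phi'(t_2 x,f(y))$ for all $y\in M_1$ (with $t_2 x:=p(t_2\cdot z)$). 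Since $f$ is $T_1$-equivariant,
\[
t_2\cdot(t_1\cdot\phi(x,y))=t_2\cdot\phi(x,t_1 y)=\phi'(t_2 x,f(t_1 y))=\phi'(t_2 x,t_1 f(y))=t_1\cdot(t_2\cdot\phi(x,y)),
\]
proving commutativity.

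The only step that requires genuine care is the well-definedness on overlaps, and the single fact that unlocks it is the $T_1$-equivariance built into the structure group $\frD(M_1)$; once that observation is in place, uniqueness, smoothness, and commutativity with $T_2$ all reduce to checking the corresponding statements in one local chart.
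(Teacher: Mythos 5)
Your proposal is correct, and it fills in exactly the argument the paper treats as immediate: the paper offers no written proof of this proposition, asserting only that it follows from Definition \ref{df:equivariant bundle}, and your identification of the $T_1$-equivariance of elements of $\frD(M_1)$ (which are by definition equivariant automorphisms of the quasitoric manifold $M_1$) as the fact that makes the fiberwise action patch is precisely the intended point. The remaining steps --- uniqueness from the defining formula, smoothness via smooth charts, and commutativity with $T_2$ via condition (iii) --- are carried out correctly.
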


\begin{df}\label{df:qt bundle}
Let $M_i$ be a quasitoric manifold over $P_i$ acted on by $T_i$ ($i=1,2$).
Then a \textit{quasitoric $M_1$-bundle over $M_2$} is a smooth $T_2$-equivariant fiber bundle $p\co M \to M_2$ with fiber $M_1$, structure group $\frD(M_1)$, and total space equipped with the action of $T:=T_1\times T_2$ defined by $(t_1,t_2)\cdot x:=t_1(t_2 x)$, where the $T_1$-action is the one defined in Proposition \ref{prop:associated torus action}.
\end{df}

We prove later that the quasitoric bundle $M$ is a quasitoric manifold over $P_1\times P_2$.


\begin{df}
Let $\calM$ be a class of quasitoric manifolds and consider a sequence
\[\xymatrix{
B_l\ar[r]^(0.44){p_{l-1}}&B_{l-1}\ar[r]^(0.55){p_{l-2}}&\cdots\ar[r]^(0.52){p_1}&B_1\ar[r]^{p_0}&B_0
}\]
where $B_0$ is a point.
Then $B_l$ is called an \textit{$l$-stage $\calM$-bundle type quasitoric manifold} if $p_i$ is a quasitoric $M_i$-bundle for some $M_i\in \calM$ ($i=0,\ldots,l-1$).
\end{df}

Now the term $(\ccp{2})$-bundle type quasitoric manifold in Theorem \ref{main thm 1} is defined as follows:
let us define $\calM(\ccp{2})$ as the class of quasitoric manifolds which are homeomorphic to $\ccp{2}$, and use the term \textit{$(\ccp{2})$-bundle type quasitoric manifold} instead of $\calM(\ccp{2})$-bundle type quasitoric manifold.


%
%

Suppose that the facets of $P_i$ are labeled by $F_{i,1},\ldots,F_{i,m_i}$ and $\lambda_i$ is a characteristic matrix of $M_i$ with respect to this facet labeling ($i=1,2$).
If we give a facet labeling of $P_1\times P_2$ by $F_1,\ldots,F_{m_1+m_2}$ where
\[F_i:=\left\{\begin{array}{l l}
F_{1,j}\times P_2 & (1\leq j\leq m_1) \\
P_1\times F_{2,j-m_1} & (m_1+1\leq j\leq m_1+m_2),
\end{array}\right.\]
then we have the following.

\begin{prop}\label{prop:qt bundle}
Let $(M_i,\pi_i)$ be a quasitoric manifold over $P_i$ acted on by $T_i$ ($i=1,2$) and $p\co M\to M_2$ be a quasitoric $M_1$-bundle over $M_2$.
Then $M$ is a quasitoric manifold over $P_1\times P_2$, which has a characteristic matrix in the form
\[\left(\begin{array}{c c}
\lambda_1 & * \\
0 & \lambda_2
\end{array}\right).\]
Conversely, if a quasitoric manifold $M$ over $P_1\times P_2$ has a characteristic matrix in the above form, then $\lambda_i$ is a characteristic matrix on $P_i$ ($i=1,2$) and $M$ is isomorphic to the total space of a quasitoric $M(\lambda_1)$-bundle over $M(\lambda_2)$.
\end{prop}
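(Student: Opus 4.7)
For the forward direction, the plan is to first define the orbit projection $\pi\co M\to P_1\times P_2$ by the local formula $\pi(\phi(y,z))=(\pi_1(z),\pi_2(y))$ in any trivialization $\phi\co U_2\times M_1\to p^{-1}(U_2)$; this is well-defined because the transition functions of $p$ lie in $\frD(M_1)$, which by definition preserves $\pi_1$. Next I check local standardness of the $T:=T_1\times T_2$-action at an arbitrary $T$-fixed point $x_0\in M$: pick a $T_2$-invariant locally standard chart around $y_0:=p(x_0)$ and a trivialization $\phi$ over it, write $\phi^{-1}(x_0)=(y_0,z_0)$, and observe that the induced $T_2$-action on the fiber $p^{-1}(y_0)\cong M_1$ is given by a homomorphism $T_2\to\frD(M_1)$ fixing $z_0$. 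Since a $T_1$-equivariant, $\pi_1$-preserving automorphism of a standard $\bbC^{n_1}$-chart fixing the origin must be a $T_1$-translation, this homomorphism factors through $T_1\subseteq\frD(M_1)$, so after reparametrizing the trivialization the $T$-action becomes the product of the standard $T_1$-action on a chart around $z_0$ and the standard $T_2$-action on the chart around $y_0$; the orbit space is then $P_1\times P_2$ as a manifold with corners. To extract a characteristic matrix, evaluate $\ell_M$ on facets: a fiber facet $F_{1,j}\times P_2$ has preimage which is the sub-bundle with fiber $\pi_1^{-1}(F_{1,j})$, so its isotropy is $\ell_{M_1}(F_{1,j})\times\{1\}\subseteq T_1\times T_2$, contributing $\lambda_1$ to the upper-left block and $0$ below; a base facet $P_1\times F_{2,k}$ has preimage $p^{-1}(\pi_2^{-1}(F_{2,k}))$, whose isotropy projects isomorphically onto $\ell_{M_2}(F_{2,k})$ under $T\to T_2$, contributing columns of the form $(\ast,\lambda_{2,k})^\top$.

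For the converse, assume $\lambda$ has the stated block-triangular form. The nonsingularity condition at any vertex $(v_1,v_2)\in P_1\times P_2$ involves a block-triangular minor of $\lambda$, forcing both $\lambda_1$ and $\lambda_2$ to be characteristic matrices on $P_1$ and $P_2$. Using $\calZ_{P_1\times P_2}=\calZ_{P_1}\times\calZ_{P_2}$ and the block-triangular shape, one obtains an exact sequence $1\to T_{\lambda_1}\to T_\lambda\to T_{\lambda_2}\to 1$ of subgroups of $T^{m_1+m_2}$, where surjectivity of $T_\lambda\to T_{\lambda_2}$ uses that $\lambda_1(T^{m_1})=T^{n_1}$. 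This gives an identification $M(\lambda)\cong\calZ_{P_2}\times_{T_{\lambda_2}}M(\lambda_1)$ as an associated bundle, where $T_{\lambda_2}$ acts on $M(\lambda_1)$ through a homomorphism $T_{\lambda_2}\to T_1$ extracted from the $\ast$-block. The resulting projection $M(\lambda)\to\calZ_{P_2}/T_{\lambda_2}=M(\lambda_2)$ is then a $T_2$-equivariant fiber bundle with fiber $M(\lambda_1)$ and structure group contained in $T_1\subseteq\frD(M(\lambda_1))$, i.e.\ a quasitoric $M(\lambda_1)$-bundle in the sense of Definition~\ref{df:qt bundle}. Lemma~\ref{lem:equivariant bundle}, applied in two steps (first modding $T_{\lambda_1}$ out of the trivial factor, then $T_{\lambda_2}$ out of the principal factor), makes the descent rigorous.

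The main obstacle is the reparametrization step in the forward direction: at a $T_2$-fixed point $y_0$, the $T_2$-action on the fiber $M_1$ through $\frD(M_1)$ must factor through $T_1\subseteq\frD(M_1)$, and the trivialization must then be adjusted so that the combined $T$-action is a standard product action. This in turn rests on the local rigidity fact that a $T_1$-equivariant, $\pi_1$-preserving self-automorphism of the standard chart $\bbC^{n_1}$ fixing the origin is a $T_1$-translation. Once this is granted, the remaining steps—equivariant patching in the forward direction, and the descent construction in the converse—are routine.
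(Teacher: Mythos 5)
Your converse argument is essentially the paper's: both identify $\calZ_{P_1\times P_2}$ with $\calZ_{P_1}\times\calZ_{P_2}$, quotient in two stages, and invoke Lemma \ref{lem:equivariant bundle}. The forward direction, however, contains a genuine gap. The ``local rigidity fact'' you lean on --- that a $T_1$-equivariant automorphism of the standard chart $\bbC^{n_1}$ covering the identity of the orbit space and fixing the origin must be translation by an element of $T_1$ --- is false. For example, $(z_1,\ldots,z_{n_1})\mapsto(e^{i|z_1|^2}z_1,z_2,\ldots,z_{n_1})$ is a smooth $T^{n_1}$-equivariant diffeomorphism of $\bbC^{n_1}$ inducing the identity on $(\bbR_{\geq 0})^{n_1}$ and fixing $0$, yet it is not a translation; on the free part a general element of $\frD(M_1)$ is multiplication by an arbitrary continuous map $\mathrm{int}\,P_1\to T_1$, not by a constant. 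What saves the conclusion is not rigidity of a single automorphism but the fact that one has a \emph{homomorphism} from a torus into $\frD(M_1)$: for each $q\in\mathrm{int}\,P_1$ it induces a homomorphism into $T_1$, and since $\mathrm{Hom}(T',T_1)$ is discrete and $\mathrm{int}\,P_1$ is connected, this homomorphism is independent of $q$. This is exactly the paper's Lemma \ref{sublem:qt bundle}, and your argument needs it (or an equivalent) in place of the false local statement.

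Two further points in the forward direction are left unaddressed. First, local standardness must be verified at every point of $M$, not only at $T$-fixed points. Second, even once the fiberwise action is known to factor through $T_1$, producing a chart in which the $T$-action is the standard product action requires a $T_2$-equivariant local trivialization of $p$ near $y_0$; condition (iii) of Definition \ref{df:equivariant bundle} only provides, for each individual pair $(x,gx)$, some element of $\frD(M_1)$ intertwining the two fibers, and upgrading this to an equivariant trivialization over a neighborhood is not automatic. The paper sidesteps all of this: it shows $M$ is simply connected with vanishing odd-degree cohomology via the Serre spectral sequence and then quotes Masuda's theorem \cite[Theorem 4.1]{M06} that such a torus manifold is automatically locally standard, reserving the hands-on fiberwise analysis (Lemma \ref{sublem:qt bundle}) for identifying the orbit space and the characteristic matrix. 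If you want to keep the direct chart-building approach, you must repair the rigidity step as above and supply the equivariant trivialization; otherwise Masuda's theorem is the cleaner route.
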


We use the following lemma to prove this proposition.

\begin{lem}\label{sublem:qt bundle}
Let $(M_i,\pi_i)$ be a quasitoric manifold over $P_i$ acted on by $T_i$ ($i=1,2$) and $p\co M\to M_2$ be a quasitoric $M_1$-bundle over $M_2$.
Moreover, take $x\in M_2$ and put $M_x:=p^{-1}(x)$, $T':=\pr_2^{-1}((T_2)_x)$ where $(T_2)_{x}$ denotes the isotropy subgroup at $x\in M_2$.
Then the action of $T$ on $M$ restricts to a $T'$-action on the fiber $M_x$, and there exists a homomorphism $\rho\co T' \to T_1$ such that $t\cdot y=\rho(t)\cdot y$ for any $t\in T'$ and $y\in M_x$.
In particular, for each $z\in M_x$, there is a split exact sequence 
\[\xymatrix{
0 \ar[r] & (T_1)_z \ar[r] & T_z \ar[r] & (T_2)_{x} \ar[r] & 0.
}\]
\end{lem}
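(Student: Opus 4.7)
The plan is to use a local trivialization of $p$ around $x$ to translate the $T'$-action on the fiber $M_x$ into a $T_1$-action through the structure group.

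First, the containment $T'\cdot M_x\subseteq M_x$ is immediate from $T_2$-equivariance of $p$: the $T_2$-component of any element of $T'$ lies in $(T_2)_x$, so fixes the basepoint $x$. The heart of the argument is the identification $\frD(M_1)\cong T_1$. By definition an element $f\in\frD(M_1)$ is a smooth $T_1$-equivariant self-diffeomorphism of $M_1$ with $\pi_1\circ f=\pi_1$. Over the interior of $P_1$ the $T_1$-action is free, so $f$ maps each principal orbit to itself and must equal translation by a uniquely determined $t_0\in T_1$; this $t_0$ is locally constant, hence globally constant by connectedness of the principal stratum, and continuity extends $f=t_0$ to all of $M_1$. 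Conversely every $t_0\in T_1$ gives such an automorphism, so $\frD(M_1)\cong T_1$ canonically.

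Next I would fix a local trivialization $\phi\co U\times M_1\to p^{-1}(U)$ around $x$ and write $\phi_x$ for its restriction to the fiber $\{x\}\times M_1$. Applying Definition \ref{df:equivariant bundle}(iii) to $g=t_2\in(T_2)_x$ (using the same trivialization at $x=t_2\cdot x$) yields, for each $t_2$, a unique $\rho_0(t_2)\in\frD(M_1)=T_1$ with $t_2\cdot\phi_x(w)=\phi_x(\rho_0(t_2)\cdot w)$. Continuity of $\rho_0$ follows from the convention on structure groups stated just before Definition \ref{df:equivariant bundle}, and the homomorphism property is immediate upon composing two such equalities. Combining this with the formula $t_1\cdot\phi_x(w)=\phi_x(t_1\cdot w)$ provided by Proposition \ref{prop:associated torus action}, I would then define $\rho\co T'\to T_1$ by $\rho(t_1,t_2):=t_1\,\rho_0(t_2)$; a one-line computation inside the trivialization yields $t\cdot y=\rho(t)\cdot y$ for all $y\in M_x$ and $t\in T'$.

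For the split exact sequence at $z\in M_x$, the containment $T_z\subseteq T'$ follows because $p(t\cdot z)=p(z)$ forces the $T_2$-component of $t\in T_z$ to fix $x$. The kernel of $\pr_2|_{T_z}\co T_z\to(T_2)_x$ is tautologically $(T_1)_z$, and a canonical splitting is given by $s(t_2):=(\rho_0(t_2)^{-1},t_2)$: this lies in $T_z$ because $\rho(s(t_2))=e$, so $s(t_2)$ acts trivially on $z$ by the formula just established, and $s$ is a homomorphism since $T_1$ is abelian. The hardest step is the identification $\frD(M_1)\cong T_1$—without knowing the structure group is a torus we cannot interpret $\rho_0$ as $T_1$-valued—but once it is in place the rest is a direct unpacking of the equivariant bundle axioms.
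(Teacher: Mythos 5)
There is a genuine gap at what you yourself flag as the key step: the identification $\frD(M_1)\cong T_1$ is false. An element $f\in\frD(M_1)$ does act on each principal orbit $\pi_1^{-1}(q)$, $q\in\mathrm{int}\,P_1$, as translation by a unique $t_0(q)\in T_1$, but the function $q\mapsto t_0(q)$ is merely continuous, not locally constant — $T_1$ is not discrete, and there is no reason for $t_0$ to be constant. Concretely, for $M_1=\cp{1}$ with the standard circle action, any ``twist'' map that rotates the orbit over $q\in I$ by an angle $\theta(q)$ depending smoothly on $q$ is a smooth equivariant automorphism covering $\id_{P_1}$, hence lies in $\frD(\cp{1})$ without being a translation. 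So $\frD(M_1)$ is in general infinite-dimensional, and your $\rho_0(t_2)$ is a priori only a diffeomorphism, not an element of $T_1$; the whole point of the lemma is to show that these particular automorphisms \emph{are} translations, so assuming $\frD(M_1)=T_1$ begs the question. (Indeed, if $\frD(M_1)$ were $T_1$, the paper's own proof would be a one-liner.)

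The paper closes exactly this gap by exploiting the rigidity of torus homomorphisms rather than of individual automorphisms: writing $\gamma(t)(y)=s(q,t)\cdot y$ on the orbit over $q$, one observes that for each fixed $q$ the map $t\mapsto s(q,t)$ is a \emph{homomorphism} $T'\to T_1$, that $q\mapsto s(q,\cdot)$ is a continuous map from the connected set $\mathrm{int}\,P_1$ into $\mathrm{Hom}(T',T_1)$, and that the latter is discrete; hence $s(q,\cdot)$ is independent of $q$, and density of $\pi_1^{-1}(\mathrm{int}\,P_1)$ in $M_1$ extends the translation identity to all of $M_1$. The remainder of your argument — the containment $T'\cdot M_x\subseteq M_x$, continuity and multiplicativity of $\rho_0$, the formula $\rho(t_1,t_2)=t_1\rho_0(t_2)$, and the section $t_2\mapsto(\rho_0(t_2)^{-1},t_2)$ of $\pr_2\co T_z\to (T_2)_x$ — is correct and matches the paper, but it all rests on the unproved (and false as stated) identification above, so the proof as written does not stand.
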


\begin{proof}
Take a local trivialization $\phi\co U\times M_1\to p^{-1}(U)$ of $p$ around $x$ and define $\varphi\co M_1\to M_x$ by $\varphi(y):=\phi(x,y)$.
Since $p$ is a $T_2$-equivariant fiber bundle with structure group $\frD(M_1)$, there exists a map $\gamma\co T'\to \frD(M_1)$ such that $t\cdot \varphi(y)=\varphi(\gamma(t)(y))$ for any $t\in T'$ and $y\in M_1$, which is clearly a homomorphism.
Moreover, since $T_1$ acts on ${\pi_1}^{-1}(\mathrm{int}\,P_1)$ freely and each $\gamma(t)$ ($t\in T'$) descends to $\id_{P_1}$, there is a unique $s_{q,y,t}\in T_1$ for each $q\in\mathrm{int}\,P_1$, $y\in {\pi_1}^{-1}(q)$, and $t\in T'$ such that $\gamma(t)(y)=s_{q,y,t}\cdot y$.
Since $\gamma(t)$ is $T_1$-equivariant, $s_{q,y,t}$ does not depend on $y$ and therefore we can put $s(q,t):=s_{q,y,t}$.
Moreover, for each $q$, the correspondence $t\mapsto s(q,t)$ gives a homomorphism from $T'$ to $T_1$.
Thus we see that the correspondence $q\mapsto s(q,\,\cdot\,)$ gives a map from $\mathrm{int}\,P_1$ to $\mathrm{Hom}(T',T_1)$, the set of continuous homomorphisms equipped with the compact-open topology.
Since $\mathrm{Hom}(T',T_1)$ is discrete and $\mathrm{int}\,P_1$ is connected, this map is constant.
If we define $\rho$ as the value of this map, then $\gamma(t)(y)=\rho(t)\cdot y$ for any $t\in T'$ and $y\in{\pi_1}^{-1}(\mathrm{int}\,P_1)$.
This identity holds for any $t\in T'$ and $y\in M_1$ since ${\pi_1}^{-1}(\mathrm{int}\,P_1)$ is dense in $M_1$.
Thus we obtain the former part of the lemma.

For each $z\in M_x$, the correspondence $t\mapsto (\rho(t)^{-1},t)$ gives a section of $\pr_2\co T_z\to (T_2)_x$, and $(T_1)_z$ clearly coincides with the kernel of $\pr_2\co T_z \to T_2$.
Thus we obtain the latter part of the lemma.
\end{proof}

\begin{proof}[proof of Proposition \ref{prop:qt bundle}]
First, we show that the $T$-action on $M$ is locally standard.
Recall that any quasitoric manifold has a CW structure without odd dimensional cells, and therefore it is simply connected and its odd degree cohomology vanishes.
By using the long exact sequence of homotopy groups and the Serre spectral sequence associated with $p$, we see that $M$ is also simply connected and has vanishing odd degree cohomology.
Then the local standardness follows immediately from the following theorem of Masuda: a torus manifold with vanishing odd degree cohomology is locally standard (\cite[Theorem 4.1]{M06}).
Here a torus manifold means an even-dimensional closed connected orientable smooth manifold equipped with an effective smooth action of the half-dimensional torus which has at least one fixed point.

Next, we prove that $M/T$ is homeomorphic to $P_1\times P_2$ as a manifold with corners.
It is clear that $p$ descends to a $T_2$-equivariant fiber bundle $\bar{p}\co M/T_1\to M_2$ with fiber $P_1$ and structure group $\{\id_{P_1}\}$ by the definition of $\frD(M_1)$.
Thus we see that there is a $T_2$-equivariant homeomorphism $f\co M/T_1\to P_1\times M_2$, where $T_2$ acts on $P_1\times M_2$ by the action on the second component, such that (i) for any local trivialization $\bar{\phi}\co U\times P_1 \to \bar{p}^{-1}(U)$ of $\bar{p}$ and any $x\in U$ the map $P_1\to P_1$ defined by $q\mapsto \pr_1\circ f\circ\bar{\phi}(x,q)$ is identify, and (ii) $\bar{p}=\pr_2\circ f$.
Then $f$ clearly descends to a homeomorphism $\bar{f}\co M/T\to P_1\times P_2$.
We can prove that $\bar{f}$ preserves corners as follows.
If we take $x\in M$ and denote by $\bar{x}\in M/T$ the equivalence class containing $x$, then $\depth\,\bar{x}=\dim T_x$ by definition.
On the other hand, $\depth\,\bar{f}(\bar{x})=\depth\,\bar{x}_1+\depth\,\bar{x}_2$ where $\bar{f}(\bar{x})=(\bar{x}_1,\bar{x}_2)\in P_1\times P_2$.
If we take a local trivialization $\phi\co U\times M_1\to p^{-1}(U)$ around $p(x)$ and put $(x_2,x_1):=\phi^{-1}(x)$, then $\depth\,\bar{x}_i=\dim (T_i)_{x_i}$ for $i=1,2$ since $\bar{x}_i=\pi_i(x_i)$.
Then we have $\depth\,\bar{x}=\depth\,\bar{f}(\bar{x})$ by Lemma \ref{sublem:qt bundle}.

Next, we consider the characteristic matrix $\lambda$ of $M$.
We denote by $\pi$ the projection $M\to M/T\cong P_1\times P_2$.
Take $x\in M$, a local trivialization $\phi\co U\times M_1\to p^{-1}(U)$ of $p$ around $p(x)$, and put $S_j:=\ell_M(F_j)$ ($j=1,\ldots,m_1+m_2$) where $\ell_M$ denotes the characteristic map associated with $M$.
If $x\in \pi^{-1}(\relint\, F_j)$ for some $j\in \{1,\ldots,m_1\}$, then $\pr_2(S_{j})\subseteq T_2$ fixes $p(x)\in \pi_2^{-1}(\mathrm{int}\,P_2)$ and therefore $S_j\subseteq T_1$.
Since $\varphi$ is $T_1$-equivariant on each fiber, we see $S_j=\ell_{M_1}(F_{1,j})$ for $j=1,\ldots,m_1$.
On the other hand, if $x\in \pi^{-1}(\relint\,F_{j})$ for some $j\in\{m_1+1,\ldots,m_1+m_2\}$, then $(T_1)_x=\{0\}$ and $\pr_2(S_j)$ fixes $p(x)\in\pi_2^{-1}(\relint\,F_{2,j-m_1})$.
Therefore we have $\pr_2(S_j)=\ell_{M_2}(F_{2,j-m_1})$.
Thus we obtain the former part of the proposition.

Finally, we prove the latter part.
It is clear that $\lambda_i$ is a characteristic matrix on $P_i$ ($i=1,2$).
We can assume that $M=M(\lambda):=\calZ_{P_1\times P_2}/T_\lambda$ (see Construction \ref{const:moment-angle mfd}) since they are isomorphic.
Put $m:=m_1+m_2$ and identify $T^{m_1}$ with $T^{m_1}\times\{0\}\subseteq T^m$.
Then $T_{\lambda_1}\subseteq T_\lambda$ and $\overline{T}\!_\lambda:=T_\lambda/T_{\lambda_1}$  is isomorphic to $T_{\lambda_2}$ through the projection to $T^{m_2}$.
If we regard that $T^m$ acts on $\calZ_{P_2}$ through the projection to $T^{m_2}$, then, since $\calZ_{P_1\times P_2}=\calZ_{P_1}\times \calZ_{P_2}$ and $M(\lambda)=(M(\lambda_1)\times \calZ_{P_2})/\overline{T}\!_\lambda$, we have the following commutative diagram.
\[\xymatrix{
M(\lambda_1) \ar[r] & M(\lambda_1)\times \calZ_{P_2} \ar[r]^(0.65){\pr_2} \ar[d] & \calZ_{P_2} \ar[d] \\
& M(\lambda) \ar[r] & M(\lambda_2)
}\]
Here the upper row is a $T^m/T_{\lambda_1}$-equivariant fiber bundle with structure group $\frD(M_1)$ and the right vertical arrow is a principal $\overline{T}\!_\lambda$-bundle.
Thus the proof is completed by Lemma \ref{lem:equivariant bundle}.
\end{proof}

Let $P_i$ be an $n_i$-dimensional simple polytope with a facet labeling $F_{i,1},\ldots,F_{i,m_i}$ ($i=1,\ldots,l$),  put $n:=\sum n_i$, $m:=\sum m_i$, and $P:=P_1\times\cdots\times P_l$.
Given $(\psi_i,\rho_i)\in GL_{n_i}(\bbZ)\times R(P_i)$ ($i=1,\ldots,l$), we define $(\psi_1,\rho_1)\times\cdots\times(\psi_l,\rho_l)\in GL_n(\bbZ)\times R(P)$ as follows:
define $\psi\in GL_n(\bbZ)$ and $\rho\in R(P)$ so that
$$
\psi=\left(
\begin{array}{c c c c}
\psi_1&0 &\cdots & 0\\
0 &\ddots &\ddots&\vdots  \\
\vdots &\ddots &\ddots &0  \\
0 &\cdots &0 &\psi_l 
\end{array}
\right),\quad
\iota(\rho)=\left(
\begin{array}{c c c c}
\iota(\rho_1)&0 &\cdots &0  \\
0 &\ddots &\ddots&\vdots  \\
\vdots &\ddots &\ddots &0  \\
0 &\cdots &0 &\iota(\rho_l) 
\end{array}
\right),
$$
and put $(\psi_1,\rho_1)\times\cdots\times(\psi_l,\rho_l):=(\psi,\rho)$.
Here we label the facets of $P$ by $F_1,\ldots,F_m$ where
$$F_{m_1+\cdots+m_{i-1}+j}:=P_1\times\cdots\times P_{i-1}\times F_{i,j}\times P_{i+1}\times\cdots\times P_l \ (i=1,\ldots,l,\ j=1,\ldots,m_i).$$

\begin{lem}\label{wehomeo on each fiber}
Let $M_i$ be a quasitoric manifold over an $n_i$-dimensional simple polytope $P_i$ ($i=1,\ldots,l$) and consider a sequence
\[\xymatrix{
B_l\ar[r]^(0.44){p_{l-1}}&B_{l-1}\ar[r]^(0.55){p_{l-2}}&\cdots\ar[r]^(0.52){p_2}&B_2 \ar[r]^{p_1} & M_1
}\]
where each $p_i$ is a quasitoric $M_{i+1}$-bundle.
Take a characteristic matrix $\lambda_i$ of $M_i$, $(\psi_i,\rho_i)\in GL_{n_i}(\bbZ)\times R(P_i)$, and put $\lambda'_i:=(\psi_i,\rho_i)\cdot\lambda_i$ for $i=1,\ldots,l$.
Then there exist a sequence
\[\xymatrix{
B'_l\ar[r]^(0.44){p'_{l-1}}&B'_{l-1}\ar[r]^(0.55){p'_{l-2}}&\cdots\ar[r]^(0.52){p'_2}&B'_2\ar[r]^(0.45){p'_1}&M(\lambda'_1),
}\]
where each $p'_i$ is a quasitoric $M(\lambda'_{i+1})$-bundle, and a weakly equivariant homeomorphism from $B_l$ to $B'_l$ represented by $(\psi_l,\rho_l)\times\cdots\times(\psi_1,\rho_1)$.
\end{lem}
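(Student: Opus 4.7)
The plan is to pass to characteristic matrices via Proposition \ref{prop:qt bundle} and its converse, perform a block-matrix computation, and then appeal to Proposition \ref{realization of rep}.

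First, by induction on $l$, I would produce a characteristic matrix of $B_l$ in block upper triangular form with the prescribed diagonal blocks. In detail, assuming $B_{l-1}$ admits a characteristic matrix $\Lambda_{l-1}$ whose diagonal blocks are $\lambda_{l-1},\ldots,\lambda_1$ read top-to-bottom, I apply Proposition \ref{prop:qt bundle} to $p_{l-1}\co B_l\to B_{l-1}$ (fiber $M_l$, base $B_{l-1}$, with fiber characteristic matrix $\lambda_l$ and base characteristic matrix $\Lambda_{l-1}$) to obtain a characteristic matrix of $B_l$ on $P_l\times(P_{l-1}\times\cdots\times P_1)$ of the form
\[
\Lambda=\begin{pmatrix}\lambda_l & * \\ 0 & \Lambda_{l-1}\end{pmatrix}
=\begin{pmatrix}\lambda_l & * & \cdots & * \\ 0 & \lambda_{l-1} & \cdots & * \\ \vdots & \ddots & \ddots & \vdots \\ 0 & \cdots & 0 & \lambda_1 \end{pmatrix}.
\]
The facet labeling induced along this iteration coincides with the convention stated just before the lemma, applied to the ordered list $P_l,P_{l-1},\ldots,P_1$.

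Next, set $(\psi,\rho):=(\psi_l,\rho_l)\times\cdots\times(\psi_1,\rho_1)$, so that $\psi$ and $\iota(\rho)$ are block diagonal with block sizes compatible with the block structure of $\Lambda$. The product $\Lambda':=\psi\cdot\Lambda\cdot\iota(\rho)$ is therefore again block upper triangular, and its diagonal blocks are exactly $\psi_i\cdot\lambda_i\cdot\iota(\rho_i)=\lambda'_i$. Applying the converse direction of Proposition \ref{prop:qt bundle} inductively to $\Lambda'$ realizes $M(\Lambda')$ as the top space of a tower
\[
B'_l\to B'_{l-1}\to\cdots\to B'_2\to M(\lambda'_1)
\]
in which each $p'_i$ is a quasitoric $M(\lambda'_{i+1})$-bundle. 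Finally, Proposition \ref{realization of rep} produces a weakly equivariant homeomorphism $f\co B_l=M(\Lambda)\to M(\Lambda')=B'_l$ whose representation is precisely $(\psi,\rho)$.

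The only nontrivial bookkeeping is to confirm that the ordering of the polytopes matches on both sides: Proposition \ref{prop:qt bundle} places the \emph{fiber} polytope first in the product, so an iterated bundle $B_l\to\cdots\to M_1$ naturally lives over $P_l\times P_{l-1}\times\cdots\times P_1$ in reversed order, and one must check that the block decomposition of $(\psi_l,\rho_l)\times\cdots\times(\psi_1,\rho_1)$ places $(\psi_i,\rho_i)$ in precisely the same block position as $\lambda_i$. Once this ordering convention is pinned down, the identity $\psi\Lambda\iota(\rho)=\Lambda'$ is immediate from block-diagonal algebra, and no genuine topological obstacle remains.
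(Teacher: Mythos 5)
Your proposal is correct and follows essentially the same route as the paper: build a block upper triangular characteristic matrix of $B_l$ by iterating Proposition \ref{prop:qt bundle}, act by the block-diagonal pair $(\psi_l,\rho_l)\times\cdots\times(\psi_1,\rho_1)$, rebuild the tower from the resulting matrix via the converse direction of Proposition \ref{prop:qt bundle}, and invoke Proposition \ref{realization of rep} for the weakly equivariant homeomorphism. The paper merely packages the same computation stage by stage, defining $\mu'_i:=(\psi'_i,\rho'_i)\cdot\mu_i$ for each intermediate $B_i$ rather than working only with the top-level matrix, and your remark on the reversed ordering of the polytope factors matches the paper's convention.
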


\begin{proof}
Put $(\psi'_i,\rho'_i):=(\psi_i,\rho_i)\times(\psi_{i-1},\rho_{i-1})\times\cdots\times(\psi_1,\rho_1)$ ($i=1,\ldots,l$).
By an iterated use of Proposition \ref{prop:qt bundle}, we can take a characteristic matrix $\mu_i$ of $B_i$ ($i=1,\ldots,l$) in the form
$$
\left(
\begin{array}{c c c c}
\lambda_i&* &\cdots &*  \\
0 &\ddots &\ddots&\vdots  \\
\vdots &\ddots &\ddots &*  \\
0 &\cdots &0 &\lambda_1 
\end{array}
\right).
$$
Then, if we put $\mu'_i:=(\psi'_i,\rho'_i)\cdot\mu_i$ ($i=1,\ldots,l$), we see that each $M(\mu'_{i+1})$ is a quasitoric $M(\lambda'_{i+1})$-bundle over $M(\mu'_{i})$ by Proposition \ref{prop:qt bundle}.
The proof is completed by setting $B'_i:=M(\mu'_i)$.
\end{proof}

\subsection{Quasitoric manifolds over $I^n$}

Now we restrict ourselves to the case $P=I^n$.
Hereafter, we always use the facet labeling $F_1,\ldots,F_{2n}$ of $I^n$ defined by
\begin{align*}
F_{i}&:=\{(x_1,\ldots,x_n)\in I^n\,\vert\, x_i=0\},\\
F_{n+i}&:=\{(x_1,\ldots,x_n)\in I^n\,\vert\, x_i=1\}
\end{align*}
for $i=1,\ldots,n$.
Note that this facet labeling is different from the one used in the previous subsection.
We easily see that $\Aut(I^n)$ is generated by $\rho_{i,j}:=(i\ j)(i+n\ j+n)$ and $\rho_k:=(k\ k+n)$ ($i,j,k=1,\ldots,n$) where we regard $\Aut(I^n)$ as a subgroup of the symmetric group $\mathfrak{S}_{2n}$ by using the facet labeling, as in Definition \ref{df:action on Lambda_P}.

\begin{df}
Let $\xi$ be a square matrix of order $n$.
We call $\xi$ a \textit{characteristic square on} $I^n$ if each diagonal component of $\xi$ is equal to $1$ and  $(E_n\:\xi)$ is a characteristic matrix on $I^n$.
We denote by $\Xi_n$ the set of characteristic squares on $I^n$.
For the convenience of notation,
we identify a characteristic square $\xi$ with the characteristic matrix $(E_n\:\xi)$, for example, we write $M(\xi)$ instead of $M((E_n\,\xi))$.
\end{df}

\begin{rem}
Due to Proposition \ref{realization of rep}, any quasitoric manifold over $I^n$ is weakly equivariantly homeomorphic to $M(\xi)$ for some characteristic square $\xi$.
\end{rem}

\begin{df}\label{H^*(xi)}
For a characteristic square $\xi=(\xi_{i,j})$ on $I^n$,
we define a graded ring $H^*(\xi)$, which is canonically isomorphic to $H^*(M(\xi);\bbZ)$, as follows.
Let $\bbZ[X_1,\ldots,X_n]$ be the polynomial ring of which the generators have degree 2,
and $\calI_{\xi}$ be the ideal generated by $u_i(\xi) X_i\,(i=1,\ldots,n)$ where $u_i(\xi):=\sum_{j=1}^n \xi_{i,j} X_j$. 
Then $H^*(\xi)$ is defined by
$$ H^*(\xi):=\bbZ[X_1,\ldots,X_n]/\calI_{\xi} .$$
\end{df}


Next, we consider the bundle-type quasitoric manifolds over $I^n$.

\begin{df}
Let $\xi$ be a characteristic square on $I^n$ and $n_1,\ldots,n_l$ be positive integers summing up to $n$.
Then $\xi$ is called \textit{$(\xi_1,\ldots,\xi_l)$-type} if it is in the form
$$
\left(
\begin{array}{c c c c}
\xi_1&* &\cdots &*  \\
0 &\ddots &\ddots&\vdots  \\
\vdots &\ddots &\ddots &*  \\
0 &\cdots &0 &\xi_{l} 
\end{array}
\right)
$$
where each $\xi_i$ is a characteristic square on $I^{n_i}$.
\end{df}

\begin{lem}\label{bundle-type and type of square}
Let $\xi_i$ be a characteristic square on $I^{n_i}$ ($i=1,\ldots,l$) and consider a sequence
\[\xymatrix{
B_l\ar[r]^(0.44){p_{l-1}}&B_{l-1}\ar[r]^(0.55){p_{l-2}}&\cdots\ar[r]^(0.52){p_2}&B_2\ar[r]^{p_1}&B_1
}\]
where $B_1=M(\xi_1)$.
If each $p_i$ is a quasitoric $M(\xi_{i+1})$-bundle, then $B_l$ is weakly equivariantly homeomorphic to $M(\xi)$ for some $(\xi_l,\ldots,\xi_1)$-type characteristic square $\xi$.
\end{lem}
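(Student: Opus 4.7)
The plan is to iterate Proposition \ref{prop:qt bundle} along the tower and then reduce the resulting characteristic matrix to the form $(E_n\mid\xi)$ by an element of $GL_n(\bbZ)\times R(I^n)$. First, I would show by induction on $i$ that $B_i$ is a quasitoric manifold over $\tilde{P}_i:=I^{n_i}\times\cdots\times I^{n_1}$ (equipped with the iterated-product facet labeling from the paragraph preceding Proposition \ref{prop:qt bundle}) whose characteristic matrix has the block upper-triangular form
\[
\mu_i=\left(\begin{array}{cccc}
\lambda_i & * & \cdots & * \\
0 & \lambda_{i-1} & \cdots & * \\
\vdots & \ddots & \ddots & \vdots \\
0 & \cdots & 0 & \lambda_1
\end{array}\right),\qquad \lambda_j:=(E_{n_j}\mid\xi_j).
\]
The base case $i=1$ is immediate from $B_1=M(\xi_1)$, and the inductive step is a direct application of Proposition \ref{prop:qt bundle} to the quasitoric $M(\xi_i)$-bundle $p_{i-1}\co B_i\to B_{i-1}$ using the known characteristic matrix $\mu_{i-1}$ of its base.

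Next, I would compare the iterated-product facet labeling with the Section 2.3 facet labeling on the underlying combinatorial cube $I^n=\tilde{P}_l$ (where $n=n_1+\cdots+n_l$). The iterated-product labeling orders the columns of $\mu_l$ block by block as $(E_{n_l}\mid\xi_l\mid E_{n_{l-1}}\mid\xi_{l-1}\mid\cdots\mid E_{n_1}\mid\xi_1)$, with nonzero off-diagonal blocks above each diagonal $\lambda_j$, whereas the Section 2.3 labeling groups all ``$=0$'' facets before all ``$=1$'' facets, so the columns should be reshuffled to $(E_{n_l}\mid E_{n_{l-1}}\mid\cdots\mid E_{n_1}\mid\xi_l\mid\xi_{l-1}\mid\cdots\mid\xi_1)$. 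This reshuffling is realized by some $\rho\in\Aut(I^n)\subseteq R(I^n)$ with $\iota(\rho)$ a permutation matrix, and after applying it the matrix $\mu_l\cdot\iota(\rho)$ takes the form $(L\mid R)$, where $L$ is block upper-triangular with diagonal blocks $E_{n_l},\ldots,E_{n_1}$ and $R$ is block upper-triangular with diagonal blocks $\xi_l,\ldots,\xi_1$.

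Since $L\in GL_n(\bbZ)$ is block upper-triangular unipotent, so is $\psi:=L^{-1}$; left-multiplication by $\psi$ then converts $(L\mid R)$ into $(E_n\mid\xi)$ with $\xi:=L^{-1}R$. A short block-by-block computation, using that both $L^{-1}$ and $R$ are block upper-triangular and that $L^{-1}$ has identity diagonal blocks, shows that $\xi$ is again block upper-triangular with diagonal blocks $\xi_l,\ldots,\xi_1$, hence $(\xi_l,\ldots,\xi_1)$-type. The overall transformation $\mu_l\mapsto\psi\cdot\mu_l\cdot\iota(\rho)$ is precisely the action of $(\psi,\rho)\in GL_n(\bbZ)\times R(I^n)$, so Corollary \ref{classification up to wehomeo} yields a weakly equivariant homeomorphism $B_l\to M(\xi)$. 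I expect the main obstacle to be purely bookkeeping: specifying $\rho$ precisely, and verifying that the off-diagonal $*$-entries of $\mu_l$ collect into the strictly above-diagonal blocks of $\xi$ rather than contaminating the $E_n$-half of $(E_n\mid\xi)$.
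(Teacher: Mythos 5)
Your proposal is correct and follows essentially the same route as the paper: iterate Proposition \ref{prop:qt bundle} to get a block upper-triangular characteristic matrix, left-multiply by the inverse of the (unipotent block upper-triangular) left half to reach the form $(E_n\,\xi)$ with $\xi$ of $(\xi_l,\ldots,\xi_1)$-type, and invoke Proposition \ref{realization of rep}. Your explicit handling of the column permutation reconciling the iterated-product facet labeling with the Section 2.3 labeling is a point the paper glosses over, but it is bookkeeping rather than a different argument.
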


\begin{proof}
Denote the sum of $n_i$ ($i=1,\ldots,l$) by $n$.
By an iterated use of Proposition \ref{prop:qt bundle}, we see that $B_l$ has a characteristic matrix $\lambda$ in the form
$$
\left(
\begin{array}{c c c c c c c c}
E_{n_l}&*&\cdots&*&\xi_l&* &\cdots &*  \\
0&\ddots&\ddots&\vdots&0 &\ddots &\ddots&\vdots  \\
\vdots&\ddots&\ddots&*&\vdots &\ddots &\ddots &*  \\
0&\cdots&0&E_{n_1}&0 &\cdots &0 &\xi_1
\end{array}
\right).
$$
We denote the left $n\times n$ part of $\lambda$ by $A$.
Then, since $A^{-1}$ is also in the form
$$
\left(
\begin{array}{c c c c}
E_{n_l}&*&\cdots&* \\
0&\ddots&\ddots&\vdots \\
\vdots&\ddots&\ddots&* \\
0&\cdots&0&E_{n_1}
\end{array}
\right),
$$
$A^{-1}\lambda=(E_n\,\xi)$ for some $(\xi_l,\ldots,\xi_1)$-type characteristic square $\xi$.
Thus the proof is completed by Proposition \ref{realization of rep}.
\end{proof}

\section{$(\ccp{2})$-bundle type quasitoric manifolds} 

In this section we give the proof of Theorem \ref{main thm 1}.
We use the facet labeling $F_1,\ldots,F_{2n}$ of $I^n$ defined in Section 2.3.
Let us begin with the following proposition.

\begin{prop}\label{only ccp{2}}
Any quasitoric manifold over $I^2$ is weakly equivariantly homeomorphic to $M(\chi)$ where $\chi$ denotes a characteristic square in the following form:
\begin{equation}\label{ch mat on I^2}
\left(\begin{array}{c c}
1 & a \\
0 & 1
\end{array}\right)\text{ or } \left(\begin{array}{c c}
1 & 2 \\
1 & 1
\end{array}\right).
\end{equation}
\end{prop}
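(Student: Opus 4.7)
The plan is to invoke Corollary \ref{classification up to wehomeo} and reduce every characteristic square on $I^2$ to one of the two stated forms via the $GL_2(\bbZ)\times R(I^2)$-action on $\Lambda_{I^2}$.

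By the remark preceding Definition \ref{H^*(xi)}, any quasitoric manifold over $I^2$ is weakly equivariantly homeomorphic to $M(\xi)$ for some characteristic square $\xi=\begin{pmatrix}1&a\\b&1\end{pmatrix}$ with $a,b\in\bbZ$. Imposing the nonsingularity condition at the vertex $F_3\cap F_4$ forces $\det\xi=1-ab=\pm 1$, hence $ab\in\{0,2\}$, which splits the argument into two cases.

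If $ab=0$, either $b=0$ (and $\xi$ is already of the first form) or $a=0$ with $b\neq 0$; in the latter subcase I would apply $\rho_{1,2}=(1\,2)(3\,4)\in R(I^2)$ together with $\psi=\begin{pmatrix}0&1\\1&0\end{pmatrix}\in GL_2(\bbZ)$, which swaps the two coordinate directions and a direct computation transforms $\xi$ into $\begin{pmatrix}1&b\\0&1\end{pmatrix}$, again of the first form. (In effect this transformation exchanges $a$ and $b$.)

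If $ab=2$, the four possibilities are $(a,b)\in\{(1,2),(2,1),(-1,-2),(-2,-1)\}$. The pair $(2,1)$ is already the second form, and $(1,2)$ reduces to $(2,1)$ by the same coordinate swap. For each negative pair I would first apply $\rho_1\rho_2=(1\,3)(2\,4)\in R(I^2)$, which swaps each facet with its opposite and hence interchanges columns $1\leftrightarrow 3$ and $2\leftrightarrow 4$; after this the first two columns of the characteristic matrix are exactly $\xi$, so multiplying on the left by $\xi^{-1}\in GL_2(\bbZ)$ restores them to $E_2$ and yields a new right-hand block whose diagonal entries are $-1$. Finally I would flip the signs of columns $3$ and $4$ using the sign-change elements of $R(I^2)$ (those with trivial image in $\Aut(I^2)$). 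An explicit $2\times 2$ matrix calculation shows that both negative pairs are thereby sent to $\begin{pmatrix}1&1\\2&1\end{pmatrix}$, i.e.\ the $(1,2)$ case already reduced to the second form.

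The main obstacle is the bookkeeping in the negative subcase of $ab=2$: one has to track the composite effect of the permutation, the $GL_2(\bbZ)$-renormalization, and the subsequent sign flips, and verify that the diagonal entries do become $+1$ while the off-diagonal entries land in the shape of the second form. Apart from this, the argument is a direct finite enumeration under the action described in Corollary \ref{classification up to wehomeo}.
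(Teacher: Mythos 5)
Your proposal is correct and follows essentially the same route as the paper: reduce to a characteristic square $\left(\begin{smallmatrix}1&a\\b&1\end{smallmatrix}\right)$, use the nonsingularity condition to force $ab\in\{0,2\}$, and finish by a finite reduction under the $GL_2(\bbZ)\times R(I^2)$-action, with the facet swap $(1\,2)(3\,4)$ exchanging $a$ and $b$. The only difference is cosmetic: where you eliminate the negative pairs via the opposite-facet involution, renormalization by $\xi^{-1}$, and column sign flips, the paper simply negates the first row and the first and third columns, but both are valid moves in the same group action and your verification goes through.
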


\begin{proof}
Let $\lambda$ be a characteristic matrix of a quasitoric manifold $M$ over $I^2$.
Since $\lambda$ satisfies the nonsingularity condition, there is a pair $(\psi,\rho)\in GL_2(\bbZ)\times (\bbZ/2)^4$ such that
\[\psi\cdot\lambda\cdot\rho=\left(\begin{array}{c c c c}
1 & 0 & 1 & a \\
0 & 1 & b & 1
\end{array}\right)\]
where $a$ and $b$ are integers satisfying $a b=1\pm 1$, i.e. $(a,b)=(0,b),(a,0),\pm(1,2),\pm(2,1)$.
Moreover, by multiplying the first row, the first column and the third column by $-1$ if necessary, we can assume that $b\geq 0$.
If we put $\lambda':=\psi\cdot\lambda\cdot\rho$ and $\sigma:=(1\,2)(3\,4)\in \Aut(I^2)$, then we have
\[\left(\begin{array}{c c}
0 & 1 \\
1 & 0
\end{array}\right)\cdot\lambda'\cdot\iota(\sigma)=\left(\begin{array}{c c c c}
1 & 0 & 1 & b \\
0 & 1 & a & 1
\end{array}\right).\]
Thus the proof is completed by Proposition \ref{realization of rep}.
\end{proof}

Throughout this section, we denote by $\kappa^2$ the latter characteristic square in (\ref{ch mat on I^2}).

\begin{rem}\label{rem:kappa^2}
We easily see that $M(\kappa^2)$ is homeomorphic to $\ccp{2}$.
For instance, since $(E_2 \,\kappa^2)$ is decomposed into a connected sum (see \cite[Section 3.2]{H15}) and $\cp{2}$ is the only one quasitoric manifold over $\Delta^2$ (\cite[Example 1.18]{DJ91}), $M(\kappa^2)$ is homeomorphic to $\ccp{2}$ or $\cp{2}\sharp \overline{\cp{2}}$.
$H^*(\kappa^2)$ (Definition \ref{H^*(xi)}) is not isomorphic to $H^*(\cp{2}\sharp \overline{\cp{2}};\bbZ)$, implying $M(\kappa^2)\cong \ccp{2}$.
Note that, by \cite[Proposition 6.2]{CMS10} of Choi, Masuda, and Suh, the other quasitoric manifolds $M(\chi)$ of Proposition \ref{only ccp{2}} are the Hirzebruch surfaces.
In particular, they are not homeomorphic to $\ccp{2}$.
\end{rem}


Next, we consider the graded ring automorphisms of $H^*(\kappa^2)$.
If we put $x:=X_2$ and $y:=u_2(\kappa^2)=X_1+X_2$ with the notation of Definition \ref{H^*(xi)}, then $$H^*(\kappa^2)=\bbZ[x,y]/(x^2-y^2,x y).$$
Let us denote by $\Aut(H^*(\kappa^2))$ the group of graded ring automorphisms of $H^*(\kappa^2)$ and regard it as a subgroup of $GL_2(\bbZ)$ by identifying an automorphism $\varphi$ with the matrix $A$ defined by
$$ \varphi\left(
\begin{array}{c}
x \\
y
\end{array}
\right)=
A \left(
\begin{array}{c}
x \\
y
\end{array}
\right).$$

\begin{lem}\label{Aut(kappa^2)}
$\Aut(H^*(\kappa^2))=\left\{\left(\begin{array}{c c}
\pm 1 &0  \\
0 &\pm 1
\end{array}
\right)
,\ 
\left(\begin{array}{c c}
0 &\pm 1  \\
\pm 1 &0
\end{array}
\right)\right\}$.
\end{lem}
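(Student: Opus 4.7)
The plan is to translate the condition that $\varphi \in \Aut(H^*(\kappa^2))$ into a small Diophantine system on the entries of the matrix $A \in GL_2(\bbZ)$, then enumerate the solutions.

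First I would write a general degree-preserving ring endomorphism. Since $H^*(\kappa^2)$ is generated in degree $2$ by $x$ and $y$, any graded ring endomorphism is determined by images $\varphi(x)=ax+by$ and $\varphi(y)=cx+dy$ with $a,b,c,d \in \bbZ$, i.e.\ by a matrix $A=\left(\begin{smallmatrix} a & b \\ c & d\end{smallmatrix}\right)$. Before imposing relations I would note that $H^4(\kappa^2)$ is freely generated by $x^2 = y^2$ (this follows either directly from the presentation, or from Remark~\ref{rem:kappa^2} identifying $M(\kappa^2) \cong \ccp{2}$), so degree-$4$ coefficients can be compared unambiguously.

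Next I would substitute into the defining relations $x^2=y^2$ and $xy=0$. Using $xy=0$ and $x^2=y^2$ in $H^*(\kappa^2)$, one computes
\begin{align*}
\varphi(x)^2 &= (a^2+b^2)\,x^2, \\
\varphi(y)^2 &= (c^2+d^2)\,x^2, \\
\varphi(x)\varphi(y) &= (ac+bd)\,x^2.
\end{align*}
Therefore $\varphi$ is a well-defined endomorphism if and only if $a^2+b^2=c^2+d^2$ and $ac+bd=0$. For $\varphi$ to be an automorphism we additionally need $\det A = ad-bc = \pm 1$.

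The last step is to solve this system. The orthogonality relation $ac+bd=0$ forces $(c,d)=t(-b,a)$ for some rational $t$ (assuming $(a,b)\neq (0,0)$, which is forced by invertibility). Substituting gives $ad-bc=t(a^2+b^2)=\pm 1$ and $c^2+d^2=t^2(a^2+b^2)=a^2+b^2$, hence $t=\pm 1$ and $a^2+b^2=1$. The integer solutions are $(a,b)\in\{(\pm 1,0),(0,\pm 1)\}$, and each choice together with $t=\pm 1$ yields exactly one of the eight matrices listed, namely the diagonal $\mathrm{diag}(\pm 1, \pm 1)$ or anti-diagonal matrices with $\pm 1$ entries. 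No step is a genuine obstacle; the only care needed is keeping track of the two sign parameters independently to confirm all eight matrices actually arise (and conversely that each of them preserves the ideal $(x^2-y^2, xy)$, which is an immediate direct check).
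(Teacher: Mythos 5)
Your proposal is correct and takes essentially the same approach as the paper: both reduce the problem to a small Diophantine system on the entries of $A$ coming from the relations $xy=0$ and $x^2=y^2$ together with $\det A=\pm 1$, and then solve it. The only (harmless) difference is in how the system is solved: the paper argues via divisibility from $ac=-bd$ alone, whereas you also invoke $a^2+b^2=c^2+d^2$ and parametrize $(c,d)$ as a rational multiple of $(-b,a)$, which is a clean and fully rigorous way to reach $a^2+b^2=1$.
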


\begin{proof}
Let $\varphi$ be an automorphism of $H^*(\kappa^2)$ identified with
$$ A=\left(\begin{array}{c c}
a &b  \\
c &d
\end{array}
\right). $$
Since $\varphi(x)\varphi(y)=(a c+b d)y^2=0$ in $H^*(\kappa^2)$, we have $a c=-b d$.
In particular, if $a\neq 0$, we see that $a$ divides $d$ and vice versa, implying $a=\pm d$.
We put $\epsilon:=d/a=\pm 1$, and then obtain $c=-\epsilon b$.
Moreover, since $\varphi$ is an automorphism, $\det A=\epsilon (a^2+b^2)=\pm 1$.
Thus we have $a=\epsilon d=\pm 1$ and $b=c=0$.
Similarly, if we assume $b\neq 0$, then we have $|b|=|c|=1$ and $a=d=0$.
Thus the proof is completed.
\end{proof}

Recall that we put $[m]_\pm:=\{\pm 1,\ldots,\pm m\}$ and define $R_m$ as the group of $(\bbZ/2)$-equivariant permutations of $[m]_\pm$ (Definition \ref{df:iota}).
Let us describe $\rho\in R_m$ by $\rho=(\rho(1),\ldots,\rho(m))$.
Then, if we put $\tau_1:=(-1,-2,-3,-4)$, $\tau_2:=(3,-2,1,4)$ and $\tau_3:=(-1,4,3,2)$, they belong to $R(I^2)$ and there are $\psi_i\in GL_2(\bbZ)$ ($i=1,2,3$) such that $(\psi_i,\tau_i)\cdot(E_2\,\kappa^2)=(E_2\,\kappa^2)$.
By Proposition \ref{realization of rep}, there are weakly equivariant self-homeomorphisms $f_i$ ($i=1,2,3$) of $M(\kappa^2)$ represented by $(\psi_i^{-1},\tau_i^{-1})$, and by Corollary \ref{induced by rep}, we have
$$
f_1^*:=\left(\begin{array}{cc}
-1 &0  \\
0 &-1
\end{array}
\right)
,\,
f_2^*:=\left(\begin{array}{cc}
1 &0  \\
0 &-1
\end{array}
\right)
,\,
f_3^*:=\left(\begin{array}{cc}
0 &-1  \\
-1 &0
\end{array}
\right),$$
where we canonically identify $H^*(M(\kappa^2);\bbZ)$ with $H^*(\kappa^2)$ (note that, in the notation of Corollary \ref{cor:cohomology ring of a quasitoric manifold}, $x=v_4$ and $y=-v_2$).
Since these matrices generate $\Aut(H^*(\kappa^2))$ by Lemma \ref{Aut(kappa^2)}, we have the following.

\begin{lem}\label{realization of Aut(kappa^2)}
Any graded ring automorphism of $H^*(M(\kappa^2);\bbZ)$ is induced by a weakly equivariant self-homeomorphism of $M(\kappa^2)$.
\end{lem}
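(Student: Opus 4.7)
The plan is to exploit the three weakly equivariant self-homeomorphisms $f_1, f_2, f_3$ of $M(\kappa^2)$ constructed in the paragraph just preceding the lemma, whose induced maps $f_i^*$ in cohomology have already been tabulated. Since composition of weakly equivariant homeomorphisms is again weakly equivariant and cohomology is functorial, the set of automorphisms of $H^*(M(\kappa^2); \bbZ)$ induced by weakly equivariant self-homeomorphisms is closed under composition, hence forms a subgroup of $\Aut(H^*(\kappa^2))$. It therefore suffices to show that $f_1^*, f_2^*, f_3^*$ generate $\Aut(H^*(\kappa^2))$.

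By Lemma \ref{Aut(kappa^2)}, the group $\Aut(H^*(\kappa^2))$ has exactly eight elements, namely the four diagonal matrices $\mathrm{diag}(\pm 1,\pm 1)$ and the four anti-diagonal matrices with $\pm 1$ entries. I would first note that $f_1^* = -E_2$ is the central element and $f_2^* = \mathrm{diag}(1,-1)$; the product $f_1^* f_2^* = \mathrm{diag}(-1,1)$ fills in the last diagonal matrix, and together with the identity this exhausts the diagonal subgroup of order four. Then, using $f_3^*$ (which is one of the anti-diagonal matrices), I would compute the four left-multiples of $f_3^*$ by the diagonal subgroup; a direct check shows $\{f_3^*, f_1^* f_3^*, f_2^* f_3^*, f_1^* f_2^* f_3^*\}$ coincides with the four anti-diagonal matrices in $\Aut(H^*(\kappa^2))$.

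Having exhibited every element of $\Aut(H^*(\kappa^2))$ as a word in $f_1^*, f_2^*, f_3^*$, the corresponding composite of the $f_i$'s is a weakly equivariant self-homeomorphism of $M(\kappa^2)$ inducing the prescribed automorphism, which proves the lemma. There is no real obstacle here: the substantive work has already been carried out in the preceding paragraph (constructing $f_1, f_2, f_3$ via Proposition \ref{realization of rep} and computing their cohomological actions via Corollary \ref{induced by rep}), and what remains is just a short finite verification of generation in an order-eight group.
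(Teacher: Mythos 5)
Your proposal is correct and is essentially the paper's own argument: the paper constructs $f_1,f_2,f_3$ in the paragraph preceding the lemma and concludes by observing that $f_1^*,f_2^*,f_3^*$ generate the order-eight group $\Aut(H^*(\kappa^2))$ determined in Lemma \ref{Aut(kappa^2)}. You merely make the (easy) generation check explicit, which the paper leaves to the reader.
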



Then we consider the isomorphisms between the cohomology rings of $(\ccp{2})$-bundle type quasitoric manifolds.

\begin{df}\label{df:calK}
We denote by $\calK_n$ the set of $(\kappa^2,\ldots,\kappa^2)$-type characteristic squares on $I^{2n}$.
For $\xi=(\xi_{i,j})\in\calK_n$ and integers $h,k$ such that $0<h\leq k\leq n$, we define $\xi_{[h,k]}:=(\xi_{i,j})_{i,j=2h-1,\ldots,2k}$, which belongs to $\calK_{k-h+1}$.
We identify $H^*(\xi_{[h,n]})$ with the subring of $H^*(\xi)$ generated by $X_{2h-1},\ldots,X_{2n}$ and $H^*(\xi_{[h,k]})$ with the quotient ring $H^*(\xi_{[h,n]})/(X_{2k+1},\ldots,X_{2n})$, where we use the notation of Definition \ref{H^*(xi)}.
\end{df}

\begin{lem}\label{lem:calK}
Any $n$-stage $(\ccp{2})$-bundle type quasitoric manifold is weakly equivariantly homeomorphic to $M(\xi)$ for some $\xi\in\calK_n$.
\end{lem}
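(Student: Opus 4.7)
The plan is to combine the two-dimensional classification of Proposition \ref{only ccp{2}} with the machinery of Lemmas \ref{wehomeo on each fiber} and \ref{bundle-type and type of square}. The argument splits naturally into three short moves: identify every fiber $M_i$ as a weakly equivariant copy of $M(\kappa^2)$, rigidify the whole tower so that each fiber is literally $M(\kappa^2)$, and then repackage the rigidified tower as a single $M(\xi)$ with $\xi\in\calK_n$.

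First I would observe that every fiber $M_i\in\calM(\ccp{2})$ appearing in the tower is a quasitoric manifold over $I^2$. This is forced by dimension and Euler characteristic: $M_i$ is a $4$-dimensional quasitoric manifold, so its orbit polytope is a convex polygon, and the number of vertices of that polygon equals $\chi(\ccp{2})=4$, leaving only $I^2$. Proposition \ref{only ccp{2}} classifies the characteristic squares on $I^2$ up to weakly equivariant homeomorphism, and Remark \ref{rem:kappa^2} singles out the unique class producing $\ccp{2}$, so each $M_i$ is weakly equivariantly homeomorphic to $M(\kappa^2)$. Concretely, for each $i=1,\dots,n$ I pick a characteristic matrix $\lambda_i$ of $M_i$ together with a pair $(\psi_i,\rho_i)\in GL_2(\bbZ)\times R(I^2)$ satisfying $(\psi_i,\rho_i)\cdot\lambda_i=(E_2\,\kappa^2)$.

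Next, rewriting the $n$-stage tower as $B_n\to B_{n-1}\to\cdots\to B_2\to B_1\cong M_1$ (the bottom bundle over the point $B_0$ contributes no extra data), I feed the pairs $(\psi_i,\rho_i)$ into Lemma \ref{wehomeo on each fiber} with $l=n$. This produces a new tower $B'_n\to B'_{n-1}\to\cdots\to B'_2\to M(\kappa^2)$ in which every bundle has fiber $M(\kappa^2)$, together with a weakly equivariant homeomorphism $B_n\to B'_n$. Finally, Lemma \ref{bundle-type and type of square} applied with $\xi_1=\cdots=\xi_n=\kappa^2$ realizes $B'_n$ as $M(\xi)$ for some $(\kappa^2,\dots,\kappa^2)$-type characteristic square $\xi$ on $I^{2n}$; by definition such a $\xi$ lies in $\calK_n$. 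Composing the two weakly equivariant homeomorphisms yields the desired identification $B_n\simeq M(\xi)$.

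There is essentially no hard part: the lemma is a direct consequence of the three cited results. The only point deserving a sanity check is that each fiber $M_i$ really does live over $I^2$ rather than over some other simple polytope admitting a quasitoric manifold homeomorphic to $\ccp{2}$, but the vertex-count/Euler-characteristic observation disposes of this immediately. The remainder is bookkeeping: verifying that the pairs $(\psi_i,\rho_i)$ lie in the right groups so that the hypotheses of Lemma \ref{wehomeo on each fiber} are met, and that the resulting characteristic matrix produced by Lemma \ref{bundle-type and type of square} is indeed of $(\kappa^2,\dots,\kappa^2)$-type.
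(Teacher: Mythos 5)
Your argument is correct and follows essentially the same route as the paper: identify each fiber with $M(\kappa^2)$ via Proposition \ref{only ccp{2}} and Remark \ref{rem:kappa^2}, rigidify the tower with Lemma \ref{wehomeo on each fiber}, and convert to a $(\kappa^2,\ldots,\kappa^2)$-type characteristic square with Lemma \ref{bundle-type and type of square}. Your Euler-characteristic check that every fiber in $\calM(\ccp{2})$ must sit over $I^2$ is a correct justification of a point the paper leaves implicit.
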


\begin{proof}
By Proposition \ref{only ccp{2}} and Remark \ref{rem:kappa^2}, if a quasitoric manifold $M$ over $I^2$ is homeomorphic to $\ccp{2}$, then $M$ is weakly equivariantly homeomorphic to $M(\kappa^2)$.
Therefore, by Lemma \ref{wehomeo on each fiber}, any ($\ccp{2}$)-bundle type quasitoric manifold is weakly equivariantly homeomorphic to a $\{M(\kappa^2)\}$-bundle type quasitoric manifold.
Then the proof is completed by Lemma \ref{bundle-type and type of square}.
\end{proof}

Thus we see that we only have to consider $M(\xi)$ ($\xi\in\calK_n$) to prove Theorem \ref{main thm 1}.


Next, let $\xi'$ be a characteristic square on $I^n$ ($n> 2$) which is $(\kappa^2,\xi'_0)$-type for some characteristic square $\xi'_0$ on $I^{n-2}$, and denote the first and second rows of $\xi'$ by $(1,2,s_3,\ldots,s_n)$ and $(1,1,t_3,\ldots,t_n)$ respectively.
We continue to use the notation of Definition \ref{H^*(xi)}.


\begin{lem}\label{lem:sublemma, filtration lemma 2-2}
Let $\varphi\co \bbZ[X_1,X_2]\to \bbZ[X_1,\ldots,X_n]$ be a graded ring monomorphism
which maps $\calI_{\kappa^2}$ into $\calI_{\xi'}$.
Additionally, we put $\varphi(X_1)=\sum_{i=1}^n a_i X_i$, $\varphi(X_2)=\sum_{i=1}^n b_i X_i$ and assume that for any prime $p$ the $\bmod\, p$ reductions of $(a_1,\ldots,a_n)$ and $(b_1,\ldots,b_n)$ are linearly independent.
Then either of the following $({\rm i})$ and $({\rm ii})$ holds:
\begin{enumerate}
 \item[(\,i\,)] $a_i=b_i=s_i=t_i=0$ for $i=3,4,\ldots,n$;
 \item[(ii)] $a_1=a_2=b_1=b_2=0$.
\end{enumerate}
\end{lem}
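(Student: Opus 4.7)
The plan is to unpack the ideal inclusion $\varphi(\calI_{\kappa^2})\subseteq\calI_{\xi'}$ into the two conditions $A(A+2B)\in\calI_{\xi'}$ and $B(A+B)\in\calI_{\xi'}$, where $A:=\varphi(X_1)=\sum_i a_iX_i$ and $B:=\varphi(X_2)=\sum_i b_iX_i$, and then to extract explicit integer equations on $a_i,b_i,s_j,t_j$ by matching coefficients. The key preliminary observation is that the $n$ generators $u_k(\xi')X_k$ of $\calI_{\xi'}$ are $\bbZ$-linearly independent in the degree-two part of $\bbZ[X_1,\ldots,X_n]$, because only $u_k(\xi')X_k$ contains $X_k^2$, with coefficient $1$. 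Hence every degree-two element of $\calI_{\xi'}$ has a unique expression $\sum_k c_k\,u_k(\xi')X_k$ with $c_k\in\bbZ$, and equating coefficients of $X_kX_l$ for $k\le l$ on both sides yields, for each pair $(k,l)$, a scalar integer relation.

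First I would focus on the two equations coming from $(k,l)=(1,2)$. Because $\xi'_{j,1}=\xi'_{j,2}=0$ for $j\ge 3$, these involve only $a_1,a_2,b_1,b_2$ and precisely encode the condition that the reduction $\varphi\bmod(X_3,\ldots,X_n)$ induces a well-defined graded ring endomorphism of $H^*(\kappa^2)$. The goal is to establish a dichotomy: either $(a_1,b_1)=(a_2,b_2)=(0,0)$, which is conclusion (ii); or both pairs are nonzero, and in that case they are automatically $\bbQ$-linearly independent. The mechanism ruling out any mixed case is that in $H^*(\kappa^2)$ one has $X_1^2=-2X_1X_2$ and $X_2^2=-X_1X_2$, so $(pX_1+qX_2)^2=0$ already forces $p=q=0$; if for instance $(a_2,b_2)=(0,0)$ while $(a_1,b_1)\ne(0,0)$, the first relation reads $A^2\equiv 0\pmod{(X_3,\ldots,X_n)}$, a contradiction.

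Assuming the nontrivial branch of the dichotomy, I would next handle the equations from $(k,l)=(1,j)$ and $(k,l)=(2,j)$ for $j\ge 3$. The $(1,j)$ pair assembles into the integer linear system
\[
\begin{pmatrix} 2(a_1+b_1) & 2a_1 \\ b_1 & a_1+2b_1 \end{pmatrix}
\begin{pmatrix} a_j \\ b_j \end{pmatrix}
= s_j\begin{pmatrix} a_1(a_1+2b_1) \\ b_1(a_1+b_1) \end{pmatrix},
\]
of determinant $2\bigl[(a_1+b_1)^2+b_1^2\bigr]$, which is nonzero precisely because $(a_1,b_1)\ne(0,0)$. Direct substitution shows the unique solution is $(a_j,b_j)=\tfrac{s_j}{2}(a_1,b_1)$. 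Symmetrically, the $(2,j)$ pair gives $(a_j,b_j)=\tfrac{t_j}{2}(a_2,b_2)$. Combining, $s_j(a_1,b_1)=t_j(a_2,b_2)$, and the $\bbQ$-linear independence from the previous step forces $s_j=t_j=0$, hence $a_j=b_j=0$. This is conclusion (i).

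The hard part will be the dichotomy step: an elementary but delicate analysis of the two polynomial equations from the $X_1X_2$ coefficient, used to rule out any intermediate solution in which exactly one of $(a_1,b_1),(a_2,b_2)$ is zero, and to verify $\bbQ$-linear independence in every nontrivial case. Once that is in hand, the remainder reduces to solving a nondegenerate $2\times 2$ integer linear system whose uniqueness drives the conclusion, and the linear-independence hypothesis on the rows $(a_i),(b_i)$ is used only at the very end to deduce $s_j=t_j=0$.
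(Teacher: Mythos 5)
Your proposal is correct, and it takes a genuinely different --- and considerably shorter --- route than the paper's. Both arguments start identically: the degree-four part of $\calI_{\xi'}$ is the free $\bbZ$-module on the generators $u_k(\xi')X_k$ (your $X_k^2$-coefficient observation), so matching coefficients of $X_k^2$, $X_1X_2$, $X_1X_j$, $X_2X_j$ in $\varphi(X_1(X_1+2X_2))$ and $\varphi(X_2(X_1+X_2))$ produces exactly the paper's equations $(\ref{eq:1})$--$(\ref{eq:6})$. From there the paper runs a long case analysis (gcd bookkeeping, a $4\times 4$ determinant, an infinite-descent divisibility argument, and a parity split), whereas you notice that for each $j\geq 3$ the equations indexed by $1$ form a $2\times 2$ linear system in $(a_j,b_j)$ with determinant $2\bigl[(a_1+b_1)^2+b_1^2\bigr]$ and unique solution $\tfrac{s_j}{2}(a_1,b_1)$ --- I verified the substitution, it is correct --- and symmetrically the equations indexed by $2$ give $\tfrac{t_j}{2}(a_2,b_2)$; $\bbQ$-independence of $(a_1,b_1)$ and $(a_2,b_2)$ then kills $s_j,t_j,a_j,b_j$ all at once. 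Your ``hard part,'' the dichotomy, is in fact easy given your anisotropy observation: if the block $\bigl(\begin{smallmatrix}a_1&a_2\\b_1&b_2\end{smallmatrix}\bigr)$ is singular, its image in $H^2(\kappa^2)$ lies in a rank-one subgroup $\bbQ w$, so writing $\bar A=\alpha w$, $\bar B=\beta w$, the relations $\bar A(\bar A+2\bar B)=\bar B(\bar A+\bar B)=0$ in $H^4(\kappa^2)\cong\bbZ$ give $\alpha(\alpha+2\beta)=\beta(\alpha+\beta)=0$, whence $\alpha=\beta=0$ in either subcase; thus the block is zero (case (ii)) or nonsingular. Two small slips to repair when writing this up: first, in the mixed case $(a_2,b_2)=(0,0)\neq(a_1,b_1)$ the relation does not literally read $A^2\equiv 0$; rather $\bar A,\bar B$ are both multiples of $X_1$ and the two relations give $a_1(a_1+2b_1)=b_1(a_1+b_1)=0$ in $\bbZ$ (using $X_1^2\neq 0$), which still forces $a_1=b_1=0$. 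Second, your accounting of the hypothesis is off: the independence used to get $s_j=t_j=0$ is that of the columns $(a_1,b_1),(a_2,b_2)$, which your dichotomy supplies internally, so your argument never actually invokes the stated mod-$p$ hypothesis on the rows --- harmless here, since the paper needs that hypothesis only because of how its own case analysis is organized.
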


\begin{proof}
We prove the lemma by showing (i) under the assumption $(a_1,a_2,b_1,b_2) \neq (0,0,0,0)$.
Since $\varphi( X_1 ( X_1 + 2 X_2 ) )$ and $\varphi( X_2 ( X_1 + X_2 ) )$ belong to $\calI_{\xi'}$,
we have
\begin{align*}
\varphi( X_1 ( X_1 + 2 X_2 ) ) &= \left( \sum_{i=1}^n a_i X_i \right)\left\{ \sum_{i=1}^n ( a_i + 2 b_i ) X_i \right\} \\
 &\equiv \alpha_1 X_1 \left( X_1 + 2 X_2 + \sum_{j=3}^n s_j X_j \right) + \beta_1 X_2 \left( X_1 + X_2 + \sum_{j=3}^n t_j X_j \right) \bmod W, \\
\varphi( X_2 ( X_1 + X_2 ) ) &= \left( \sum_{i=1}^n b_i X_i \right)\left\{ \sum_{i=1}^n ( a_i + b_i ) X_i \right\} \\
 &\equiv \alpha_2 X_1 \left( X_1 + 2 X_2 + \sum_{j=3}^n s_j X_j \right) + \beta_2 X_2 \left( X_1 + X_2 + \sum_{j=3}^n t_j X_j \right) \bmod W
\end{align*}
for some integers $\alpha_i, \beta_i\, (i=1,2)$, where $W$ denotes the submodule spanned by $\{ X_p X_q \,\vert\, p,q \geq 3 \}$.
Since the coefficients of ${X_1}^2$ in $\varphi( X_1 ( X_1 + 2 X_2 ) )$ and $\varphi( X_2 ( X_1 + X_2 ) )$ are
$a_1(a_1 + 2 b_1)$ and $b_1(a_1 + b_1)$ respectively,
we obtain $\alpha_1=a_1(a_1 + 2 b_1)$ and $\alpha_2=b_1(a_1 + b_1)$.
Similarly, we see $\beta_1=a_2(a_2+2 b_2)$ and $\beta_2=b_2(a_2+b_2)$.
Thus we obtain the following equations.
\begin{eqnarray*}
  a_1(a_2+2 b_2) + a_2(a_1+2 b_1) & = & 2 a_1(a_1+2 b_1) + a_2(a_2+2 b_2) \\
  a_1(a_i+2 b_i) + a_i(a_1+2 b_1) & = & a_1(a_1+2 b_1) s_i\quad(i\geq 3) \\
  a_2(a_i+2 b_i) + a_i(a_2+2 b_2) & = & a_2(a_2+2 b_2) t_i\quad(i\geq 3) \\
  b_1(a_2+b_2) + b_2(a_1+b_1) & = & 2 b_1(a_1+b_1) + b_2(a_2+b_2) \\
  b_1(a_i+b_i) + b_i(a_1+b_1) & = & b_1(a_1+b_1) s_i\quad(i\geq 3) \\
  b_2(a_i+b_i) + b_i(a_2+b_2) & = & b_2(a_2+b_2) t_i\quad(i\geq 3) 
\end{eqnarray*}
For the convenience, we rewrite these equations as follows.
\begin{eqnarray}\label{eq:1}
  (a_1-a_2)(a_2+2 b_2) & = & (2 a_1-a_2)(a_1+2 b_1) \\ \label{eq:2}
  a_1(a_i+2 b_i) & = & (s_i a_1 -a_i)(a_1+2 b_1)\quad(i\geq 3) \\ \label{eq:3}
  a_2(a_i+2 b_i) & = & (t_i a_2-a_i)(a_2+2 b_2)\quad(i\geq 3) \\ \label{eq:4}
  (b_1-b_2)(a_2+b_2) & = & (2 b_1-b_2)(a_1+b_1) \\ \label{eq:5}
  b_1(a_i+b_i) & = & (s_i b_1-b_i)(a_1+b_1)\quad(i\geq 3) \\ \label{eq:6}
  b_2(a_i+b_i) & = & (t_i b_2-b_i)(a_2+b_2)\quad(i\geq 3) 
\end{eqnarray}

First, we assume that all of $a_1-a_2$, $2 a_1-a_2$, $b_1-b_2$ and $2 b_1-b_2$ are non-zero.
Let $k>0$ be the greatest common divisor of $a_1-a_2$ and $2 a_1-a_2$,
and $l>0$ be that of $b_1-b_2$ and $2 b_1-b_2$.
Suppose that $r$ divides $a_1+2 b_1$ and $a_2+2 b_2$.
If we assume that $r$ does not divide $k$, then there is a prime number $r'$ which divides $r/(k,r)$ but does not divide $k/(k,r)$,
where $(k,r)$ means the greatest common divisor.
Then, by (\ref{eq:2}) and (\ref{eq:3}), $r'$ divides $a_i+2 b_i$ for $i=1,2,\ldots,n$,
but it contradicts the assumption (b).
Thus we see that any common divisor of $a_1+2 b_1$ and $a_2+2 b_2$ divides $k$.
In particular, $a_1+2 b_1, a_2+2 b_2 \neq 0$.
Similarly, we shall show that any common divisor of $a_1+b_1$ and $a_2+b_2$ divides $l$.

Let $p>0$ be the greatest common divisor of $a_1+2 b_1$ and $a_2+2 b_2$,
and $q>0$ be that of $a_1+b_1$ and $a_2+b_2$.
Since $\frac{a_1-a_2}{k}$ and $\frac{2 a_1-a_2}{k}$ (resp. $\frac{a_1+2 b_1}{p}$ and $\frac{a_2+2 b_2}{p}$) are prime to each other,
we obtain
$$ \cfrac{\; \cfrac{a_1-a_2}{k}\; }{\; \cfrac{a_1+2 b_1}{p}\; } = \cfrac{\; \cfrac{2 a_1-a_2}{k}\; }{\; \cfrac{a_2+2 b_2}{p}\; } = \pm 1 $$
from (\ref{eq:1}).
It can be written as
\begin{equation}\label{eq:7}
\frac{a_1-a_2}{a_1+2 b_1} = \frac{2 a_1-a_2}{a_2+2 b_2} = \pm \frac{k}{p} \in \bbZ .
\end{equation}
Similarly, we obtain
\begin{equation}\label{eq:8}
\frac{b_1-b_2}{a_1+b_1} = \frac{2 b_1-b_2}{a_2+b_2} = \pm \frac{l}{q} \in \bbZ .
\end{equation}

Define
$$ k':=\frac{a_1-a_2}{a_1+2 b_1},\ l':=\frac{b_1-b_2}{a_1+b_1}\in \bbZ .$$
Then, from (\ref{eq:7}) and (\ref{eq:8}), we have the following equations.
\begin{eqnarray}\label{eq:9}
  \left\{
    \begin{array}{l}
      (1-k')a_1-a_2-2 k' b_1 = 0 \\
      2 a_1+(-1-k')a_2-2 k' b_2 = 0 \\
      -l' a_1+(1-l')b_1-b_2 = 0 \\
      -l' a_2+2 b_1+(-1-l')b_2 = 0
    \end{array}
  \right.
\end{eqnarray}
Since we assume $a_i,b_i\neq 0\,(i=1,2)$, the determinant of the matrix
$$
A:=
\left(
\begin{array}{c c c c}
1-k' &-1 &-2 k' &0  \\
2 &-1-k' &0 &-2 k'  \\
-l' &0 &1-l' &-1  \\
0 &-l' &2 &-1-l' 
\end{array}
\right)
$$
equals $0$.
Therefore, since $\det A=(k'+l')^2+(k' l'+1)^2$, we obtain $(k',l')=(1,-1),(-1,1)$.

If $(k',l')=(1,-1)$, we have
$$ a_1=b_2-2 b_1,\ a_2=-2 b_1 $$
from (\ref{eq:9}).
If $b_1=0$, we easily obtain $a_j=b_j=s_j=t_j=0\,(j\geq 2)$ from (\ref{eq:2}), (\ref{eq:3}), (\ref{eq:5}) and (\ref{eq:6}).
On the other hand, if we assume $b_2=0$, we similarly obtain $a_j=b_j=s_j=t_j=0\,(j\geq 2)$
(but this contradicts the assumption (b) since $(a_1,\ldots,a_n)\equiv 0 \bmod 2$).

Then we can assume that $b_1,b_2\neq 0$.
Putting $b'_i:=b_i/l$ ($i=1,2$), we obtain the following equations from (\ref{eq:2}), (\ref{eq:3}), (\ref{eq:5}) and (\ref{eq:6}).
\begin{eqnarray}\label{eq:10}
 (b'_2-2 b'_1)(a_i+2 b_i) & = & \{s_i (b_2-2 b_1) -a_i\}b'_2\quad(i\geq 3) \\ \label{eq:11}
 b'_1(a_i+2 b_i) & = & (-2 t_i b_1-a_i)(b'_1-b'_2)\quad(i\geq 3) \\ \label{eq:12}
 b'_1(a_i+b_i) & = & (s_i b_1-b_i)(b'_2-b'_1)\quad(i\geq 3) \\ \label{eq:13}
 b'_2(a_i+b_i) & = & (t_i b_2-b_i)(-2 b'_1+b'_2)\quad(i\geq 3) 
\end{eqnarray}
If $b_2$ is odd, $b'_2-2 b'_1$ and $b'_2$ (resp. $b'_1$ and $b'_1-b'_2$) are prime to each other,
and hence we obtain the following.
\begin{eqnarray}\label{eq:14}
 \frac{a_i+2 b_i}{b'_2} & = & s_i l - \frac{a_i}{b'_2-2 b'_1}\in\bbZ\quad(i\geq 3) \\ \label{eq:15}
 \frac{a_i+2 b_i}{b'_1-b'_2} & = & -2 t_i l - \frac{a_i}{b'_1}\in\bbZ\quad(i\geq 3) \\ \label{eq:16}
 \frac{a_i+b_i}{b'_2-b'_1} & = & s_i l - \frac{b_i}{b'_1}\in\bbZ\quad(i\geq 3) \\ \label{eq:17}
 \frac{a_i+b_i}{b'_2-2 b'_1} & = & t_i l - \frac{b_i}{b'_2}\in\bbZ\quad(i\geq 3) 
\end{eqnarray}
In particular, $b'_1$ divides $a_i$ and $b_i$ for $i=3,4,\ldots,n$.
Putting $a'_i:=a_i/b'_1$ and $b'_i:=b_i/b'_1$ ($i\geq 3$), from (\ref{eq:14}) and (\ref{eq:16}) (resp. (\ref{eq:15}) and (\ref{eq:17})),
we obtain
\begin{eqnarray}\label{eq:18}
 \{(a'_i+2 b'_i)(b'_2-2 b'_1) + a'_i b'_2\}(b'_2-b'_1)b'_1 & = & \{(a'_i+b'_i)b'_1 + b'_i(b'_2-b'_1)\}(b'_2-2 b'_1)b'_2, \\ \label{eq:19}
 \{(a'_i+2 b'_i)b'_1 + a'_i(b'_1-b'_2)\}(b'_2-2 b'_1)b'_2 & = & -2\{(a'_i+b'_i)'_2 + b'_i(b'_2-2 b'_1)\}b'_1(b'_1-b'_2)
\end{eqnarray}
for $i=3,\ldots,n$.
Since $b'_1$ is prime to $b'_2-2 b'_1$, $b'_2$ and $b'_2-b'_1$ respectively, we see that $b'_1$ divides $b'_i\,(i=3,\ldots,n)$ from (\ref{eq:18})
and divides $a'_i\,(i=3,\ldots,n)$ from (\ref{eq:19}).
Repeating this procedure, we see that any power of $b'_1$ divides $a_i,b_i\,(i=3,\ldots,n)$.
By similar arguments, we can show that any powers of $b'_2$, $2 b'_1-b'_2$ and $b'_1-b'_2$ divide $a_i,b_i\,(i=3,\ldots,n)$.
$b'_1$, $b'_2$, $2 b'_1-b'_2$ and $b'_1-b'_2$ cannot be $\pm 1$ simultaneously, and hence $a_i,b_i=0\,(i=3,\ldots,n)$.
Then we obtain $s_i,t_i=0\,(i=3,\ldots,n)$ from (\ref{eq:14}) and (\ref{eq:15}).
 
Otherwise, if $b'_2$ is even (and hence $b'_1$ is odd), put $b''_2:=b'_2/2$.
Then we have the following.
\begin{eqnarray*}
 (b''_2-b'_1)(a_i+2 b_i) & = & \{s_i (b_2-2 b_1) -a_i\}b''_2\quad(i\geq 3) \\
 b'_1(a_i+2 b_i) & = & (-2 t_i b_1-a_i)(b'_1-2 b''_2)\quad(i\geq 3) \\
 b'_1(a_i+b_i) & = & (s_i b_1-b_i)(2 b''_2-b'_1)\quad(i\geq 3) \\
 b''_2(a_i+b_i) & = & (t_i b_2-b_i)(b''_2-b'_1)\quad(i\geq 3) 
\end{eqnarray*}
By an argument similar to above, we obtain $a_i,b_i,s_i,t_i=0\,(i=3,\ldots,n)$ again.
We shall obtain (i) in the same way if $(k',l')=(-1,1)$. 

Finally, we consider the case where at least one of $a_1-a_2$, $2 a_1-a_2$, $b_1-b_2$ and $2 b_1-b_2$ equals zero.
Note that ${X_2}^2$ and $X_p X_q$ ($p=1,2$, $q=3,\ldots,n$) form a basis of $H^4(\xi')/W$.
Then, by considering the equation $\varphi(X_2(X_1+X_2))=0$ in $H^4(\xi')/W$, we see that $(a_1,a_2)=(0,0)$ implies $(b_1,b_2)=(0,0)$ and vice versa.
Since we assume $(a_1,a_2,b_1,b_2) \neq (0,0,0,0)$ as mentioned at the beginning of this proof, we have $(a_1,a_2)\neq (0,0)$ and $(b_1,b_2)\neq(0,0)$.

If $a_1-a_2=0$, then we have $0=a_1(a_1+2b_1)$ by (\ref{eq:1}), which implies $a_1+2b_1=0$ since $(a_1,a_2)\neq 0$.
Similarly, we obtain $a_i+2b_i=0$ ($i\geq 3$) by (\ref{eq:2}), but this contradicts the assumption of linear independence since $(a_1,\ldots,a_n)\equiv 0 \bmod 2$.

If $2a_1-a_2=0$, then we have $a_2+2b_2=0$ by (\ref{eq:1}), which implies $b_2=-a_1$.
By (\ref{eq:3}) and (\ref{eq:2}), we have $a_i+2b_i=0$ ($i\geq 3$) and then $s_i a_1-a_i=0$ ($i\geq 3$).
Moreover, $a_i+t_i b_2=0$ by (\ref{eq:6}).
We have $b_1(a_1+b_1)=0$ by (\ref{eq:4}).
If $b_1=0$, then (\ref{eq:5}) implies $b_i=0$ ($i\geq 3$), and therefore we obtain $a_i=s_i=t_i$ ($i\geq 3$) since both $a_1$ and $b_2$ are non-zero.
If $a_1+b_1=0$, then (\ref{eq:5}) implies $a_i+b_i=0$ ($i\geq 3$), and therefore $a_i=b_i=s_i=t_i=0$ ($i\geq 3$).
We can show $a_i=b_i=s_i=t_i=0$ ($i\geq 3$) similarly in the other two cases.
\end{proof}


For $\rho\in \mathfrak{S}_m$, the symmetric group, and a positive integer $k$, we define $\rho[k]\in \mathfrak{S}_{k m}$ so that $\rho[k](i k-j)=\rho(i)\cdot k-j$ for $i=1,\ldots,m$ and $j=0,\ldots,k-1$.

\begin{lem}\label{lem:permutation}
Let $\xi'\in\calK_n$, $s_{i,j}$ be its $(i,j)$-th entry, and assume that there exist two integers $p$, $q$ which satisfy the following:
\begin{enumerate}
 \item[(a)] $0<p<q\leq n$;
 \item[(b)] $s_{i,j}=0$ if $i=2p-1,2p$ and $j=2p+1,\ldots,2q$.
\end{enumerate}
Then there exist $\eta'\in\calK_n$ and a weakly equivariant homeomorphism $f\co M(\eta')\rightarrow M(\xi')$ such that $f^*(X_i)=X_{\sigma[2](i)}$ ($i=1,\ldots,2n$) where $\sigma$ denotes the cyclic permutation $(q\ q-1\ \cdots\ p)$.
\end{lem}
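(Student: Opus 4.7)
The plan is to realize $f$ via Proposition \ref{realization of rep} by constructing an appropriate pair $(\psi,\rho)\in GL_{2n}(\bbZ)\times R(I^{2n})$. Set $\tau:=\sigma^{-1}[2]$ and take $\rho\in R(I^{2n})$ with all signs trivial whose underlying combinatorial automorphism $\rho_0=p(\rho)\in\Aut(I^{2n})$ acts by $\tau$ compatibly on both families of parallel facets, i.e., $\rho_0(i)=\tau(i)$ and $\rho_0(2n+i)=2n+\tau(i)$ for $i=1,\ldots,2n$. Writing $P_\tau$ for the $2n\times 2n$ permutation matrix of $\tau$, set $\psi:=P_\tau^{-1}$ and define $\eta':=P_\tau\xi'P_\tau^{-1}$. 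Since $\iota(\rho)$ is block-diagonal with two copies of $P_\tau$, a direct matrix computation gives $(\psi,\rho)\cdot(E_{2n}\ \eta')=(E_{2n}\ \xi')$.

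The crux is to verify $\eta'\in\calK_n$. By construction $\eta'_{i,j}=\xi'_{\tau(i),\tau(j)}$, so in the natural $2\times 2$ block decomposition the $(a,b)$-block of $\eta'$ coincides with the $(\sigma^{-1}(a),\sigma^{-1}(b))$-block of $\xi'$. In particular the diagonal blocks of $\eta'$ are permuted copies of $\kappa^2$ and remain $\kappa^2$. For upper-triangularity, the $(a,b)$-block of $\eta'$ with $a>b$ is automatically zero whenever $\sigma^{-1}(a)>\sigma^{-1}(b)$, so it suffices to handle pairs $a>b$ with $\sigma^{-1}(a)<\sigma^{-1}(b)$. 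A short case analysis, using that $\sigma$ acts nontrivially only on $\{p,\ldots,q\}$ and that $\sigma^{-1}(q)=p$ while $\sigma^{-1}(c)=c+1$ for $c\in\{p,\ldots,q-1\}$, pinpoints these problematic pairs as exactly $(a,b)=(q,b)$ for $b\in\{p,\ldots,q-1\}$; the corresponding zero-block requirements on $\xi'$ are the vanishing of the blocks $(p,p+1),(p,p+2),\ldots,(p,q)$, which is precisely condition (b).

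Proposition \ref{realization of rep} then produces a weakly equivariant homeomorphism $f\co M(\eta')\to M(\xi')$ represented by $(\psi,\rho)$, and Corollary \ref{induced by rep}, combined with the identification $X_i\leftrightarrow v_{2n+i}$ of the generators of $H^*(M(\xi'))\cong H^*(\xi')$, yields $f^*(X_i)=X_{\sigma[2](i)}$ for $i=1,\ldots,2n$. The main obstacle is the bookkeeping in the upper-triangularity verification: the direction of $\tau$ (namely $\sigma^{-1}[2]$ rather than $\sigma[2]$) must be chosen so that condition (b) captures exactly the zero-block pattern required, and the inversion between the direction of $\tau$ and the final formula $f^*(X_i)=X_{\sigma[2](i)}$ comes from the $\iota(\rho)^{-1}$ appearing in Corollary \ref{induced by rep}.
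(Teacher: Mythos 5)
Your construction is correct and produces the same homeomorphism as the paper, but the organization is genuinely different. The paper factors $\sigma$ into the adjacent transpositions $\sigma_{q-1}\circ\cdots\circ\sigma_p$, realizes each as $\omega_k=\rho_{k,k+1}[2]$, and checks by induction that after each swap the conjugated matrix stays in $\calK_n$ while the zero-block hypothesis migrates from block row $p$ to block row $p+1$; your version performs the full conjugation $\eta'=P_\tau\xi'P_\tau^{-1}$ at once and isolates, by a direct case analysis on $\sigma^{-1}$, exactly which sub-diagonal blocks of $\eta'$ could be nonzero. Your identification of the problematic pairs as $(a,b)=(q,b)$ with $b\in\{p,\ldots,q-1\}$, corresponding to the blocks $(p,p+1),\ldots,(p,q)$ of $\xi'$, is exactly right and shows transparently why hypothesis (b) is what is needed; the paper's induction hides this global picture but avoids the case analysis. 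The resulting $\eta'$ and the pair $(\psi,\rho)$ are the same in both arguments (yours is written as the representation of $f\co M(\eta')\to M(\xi')$ directly, the paper's as the representation of its inverse), so the two proofs are interchangeable.

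One caveat on the last step. You attribute the inversion between $\tau=\sigma^{-1}[2]$ and the output formula $f^*(X_i)=X_{\sigma[2](i)}$ to ``the $\iota(\rho)^{-1}$ in Corollary \ref{induced by rep}.'' Taken literally, $(v_1,\ldots,v_m)\cdot\iota(\rho)^{-1}$ has $j$-th entry $\pm v_{p(\rho)(j)}$, which would give $f^*(X_i)=X_{\sigma^{-1}[2](i)}$ --- the opposite of what you (correctly) assert. The genuine source of the inversion is geometric: the representation records $\sigma_f=p(\rho)$, so $f$ carries $\pi^{-1}(F_i)$ to $\pi'^{-1}(F_{\sigma_f(i)})$, and pulling back Poincar\'e duals gives $f^*(v'_j)=\pm v_{\sigma_f^{-1}(j)}$, i.e.\ $f^*(v'_1,\ldots,v'_m)=(v_1,\ldots,v_m)\cdot\iota(\rho)$; one can also check that this, and not $\iota(\rho)^{-1}$, is the unique signed permutation compatible with $\lambda'=\psi\lambda\iota(\rho)$ sending $\calJ_{\lambda'}$ into $\calJ_\lambda$. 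With $\sigma_f=\sigma^{-1}[2]$ this yields $f^*(X_i)=X_{\sigma[2](i)}$ as desired. So your final formula is the right one; just be aware that it follows from the corrected form of Corollary \ref{induced by rep} rather than from its printed form, which is off by this same inverse.
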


\begin{proof}
Recall that we put $\rho_{i,j}:=(i\ j)(i+n\ j+n)\in \Aut(I^n)$.
If we put $\omega_k:=\rho_{k,k+1}[2]$ and $\omega_{p,q}:=\omega_{q-1}\circ\cdots\circ\omega_p$, then $\omega_{p,q}\in \Aut(I^{2n})$.
Additionally, we put $\psi:=\iota(\sigma[2])^{-1}\in GL_{2n}(\bbZ)$ and $\sigma_k:=(k\,k+1)\in \mathfrak{S}_n$.
Since $\iota$ is an antihomomorphism,
$$(\psi,\omega_{p,q})\cdot(E_{2n}\,\xi')= \iota(\sigma_{q-1}[2])^{-1}\cdots\iota(\sigma_p[2])^{-1}\cdot(E_{2n}\,\xi')\cdot\iota(\omega_p)\cdots\iota(\omega_{q-1}).$$
We can easily check that $\iota(\sigma_p[2])^{-1}\cdot(E_{2n}\,\xi')\cdot\iota(\omega_p)=(E_{2n}\,\xi'')$ where $\xi''=(s'_{i,j})\in\calK_n$ and it satisfies the following:
$s'_{i,j}=0$ if $i=2p+1,2p+2$ and $j=2p+3,\ldots,2q$.
By induction, we see that $(\psi,\omega_{p,q})\cdot(E_{2n}\,\xi')=(E_{2n}\,\eta')$ for some $\eta'\in\calK_n$.
Then the proof is completed by Proposition \ref{realization of rep} and Corollary \ref{induced by rep}.
\end{proof}

\begin{lem}\label{lem:filtration lemma 2-2}
Let $\xi,\xi'\in\calK_n$ and $\varphi \co H^*(\xi) \rightarrow H^*(\xi')$ be a graded ring isomorphism.
Then there exist $\eta'\in\calK_n$ and a weakly equivariant homeomorphism $f\co M(\eta')\rightarrow M(\xi')$ such that $f^* \circ \varphi$ preserves the ideal $(X_{2i-1},\ldots,X_{2n})$ for each $i=1,\ldots,n$.
\end{lem}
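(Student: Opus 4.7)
My plan is to proceed by induction on $n$, with the base case $n=1$ being trivial since only the ideal $(X_1,X_2)$ needs to be preserved and this holds for any graded ring isomorphism.

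For the inductive step, my first goal is to produce $\eta'_1\in\calK_n$ and a weakly equivariant homeomorphism $f_1\co M(\eta'_1)\to M(\xi')$ such that $f_1^*\circ\varphi$ sends the ideal $(X_3,\ldots,X_{2n})$ of $H^*(\xi)$ into the corresponding ideal of $H^*(\eta'_1)$. The strategy is to analyze the degree-2 images $\varphi(X_1)=\sum a_i X_i$ and $\varphi(X_2)=\sum b_i X_i$ using Lemma \ref{lem:sublemma, filtration lemma 2-2}. Since $\varphi$ is an isomorphism, its matrix on $H^2$ lies in $GL_{2n}(\bbZ)$, which supplies the mod-$p$ linear independence hypothesis of the sublemma for every prime $p$. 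Granted the other hypothesis (see below), the sublemma provides a dichotomy: either (i) $\varphi(X_1),\varphi(X_2)\in\bbZ\langle X_1,X_2\rangle$ and the top rows of $\xi'$ are clean outside the top block, or (ii) $\varphi(X_1),\varphi(X_2)\in\bbZ\langle X_3,\ldots,X_{2n}\rangle$.

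In case (i), invertibility of the degree-2 matrix forces the top-right $(2\times(2n-2))$-block to vanish as well, so $\varphi$ already preserves $(X_3,\ldots,X_{2n})$ and I take $f_1=\id$. In case (ii), I plan to iterate the sublemma analysis (applied to appropriately shifted versions of the pair $\varphi(X_1),\varphi(X_2)$) to identify a single block-index $k$ with $\varphi(X_1),\varphi(X_2)\in\bbZ\langle X_{2k-1},X_{2k}\rangle$, and then invoke Lemma \ref{lem:permutation} with $p=1$ and this $q=k$ to produce a weakly equivariant homeomorphism whose induced cohomology map cyclically moves the $k$-th block of $\xi'$ to the top, thereby reducing to case (i). Once $f_1^*\circ\varphi$ preserves $(X_3,\ldots,X_{2n})$, its restriction to the subring generated by $X_3,\ldots,X_{2n}$ will be a graded ring isomorphism $H^*(\xi_{[2,n]})\to H^*((\eta'_1)_{[2,n]})$ between rings in $\calK_{n-1}$. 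The inductive hypothesis will then yield a further weakly equivariant homeomorphism handling the remaining ideals, and Proposition \ref{prop:qt bundle} together with the block structure will allow it to be extended to a weakly equivariant homeomorphism of $M(\eta')$ onto $M(\eta'_1)$ acting trivially on the top block; its composition with $f_1$ will give the desired $f$.

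The main obstacle is verifying the hypothesis of Lemma \ref{lem:sublemma, filtration lemma 2-2}: the naive polynomial lift $\tilde\varphi\co\bbZ[X_1,X_2]\to\bbZ[X_1,\ldots,X_{2n}]$ sending $X_k$ to the unique $H^2$-representative of $\varphi(X_k)$ does not automatically map $\calI_{\kappa^2}$ into $\calI_{\xi'}$, because the relation $X_1 u_1(\xi)=0$ in $H^*(\xi)$ contains extra terms $\xi_{1,j} X_j$ for $j\geq 3$. Concretely, one computes $\tilde\varphi(X_1(X_1+2X_2))\equiv -\tilde\varphi(X_1)\sum_{j\geq 3}\xi_{1,j}\tilde\varphi(X_j)\pmod{\calI_{\xi'}}$, and the analogous identity for $X_2(X_1+X_2)$. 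The crux is to show that this ``defect'' lies in the subspace $W=\langle X_p X_q : p,q\geq 3\rangle$ modulo $\calI_{\xi'}$, so that the proof of the sublemma (which already works modulo $W$, since for $\xi'\in\calK_n$ the generators $u_i(\xi')X_i$ with $i\geq 3$ automatically lie in $W$) can be carried out unchanged. Handling this will likely require a preliminary reduction that cleans up $\xi$ using the bundle-type structure from Proposition \ref{prop:qt bundle}, or a careful coefficient-by-coefficient adaptation of equations (1)--(6) in the sublemma's proof to absorb the defect terms.
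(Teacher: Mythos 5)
There is a genuine gap, and you have in fact put your finger on it yourself: Lemma \ref{lem:sublemma, filtration lemma 2-2} cannot be applied to $\varphi(X_1),\varphi(X_2)$, because $X_1,X_2$ do not satisfy the $\kappa^2$ relations in $H^*(\xi)$ --- the actual relations are $X_1u_1(\xi)=0$ and $X_2u_2(\xi)=0$, which carry the extra terms $\xi_{1,j}X_j$ and $\xi_{2,j}X_j$ for $j\geq3$ --- and your proposal never resolves the resulting ``defect.'' The resolution is not to patch the sublemma but to apply it at the other end of $\xi$: since $\xi\in\calK_n$ is upper block triangular, its bottom block has nothing to its right, so $X_{2n-1}(X_{2n-1}+2X_{2n})=0$ and $X_{2n}(X_{2n-1}+X_{2n})=0$ hold exactly in $H^*(\xi)$. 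Hence the lift of $(\varphi(X_{2n-1}),\varphi(X_{2n}))$ does map $\calI_{\kappa^2}$ into $\calI_{\xi'}$, and this matches the asymmetry built into the sublemma (exact $\kappa^2$ relations on the source side, a $(\kappa^2,\xi'_0)$-type target with possibly nonzero $s_i,t_i$). The dichotomy, iterated down the blocks of $\xi'$ and combined with Lemma \ref{lem:permutation} (which moves a block of $\xi'$ \emph{downward}, i.e.\ $f^*(X_{2p-1})=X_{2q-1}$, not upward to the top as you invoke it), then arranges that $f'^*\circ\varphi$ preserves the \emph{smallest} ideal $(X_{2n-1},X_{2n})$. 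One then quotients by it, obtains $\bar{\varphi}\co H^*(\xi_{[1,n-1]})\to H^*(\xi'_{[1,n-1]})$, applies the inductive hypothesis, and lifts the resulting homeomorphism back by forming the product $(\psi_0,\rho_0)\times(E_2,e)$ of representations.

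A second, independent error: even if the dichotomy held for $\varphi(X_1),\varphi(X_2)$, your case (i) does not give preservation of $(X_3,\ldots,X_{2n})$. That ideal has $2n-2$ generators; knowing $\varphi(X_1),\varphi(X_2)\in\bbZ\langle X_1,X_2\rangle$ puts no constraint on the $X_1,X_2$-components of $\varphi(X_j)$ for $j\geq3$, and invertibility certainly does not force them to vanish (consider the invertible substitution $X_3\mapsto X_1+X_3$, identity on the other generators). The ideal $(X_{2n-1},X_{2n})$, by contrast, is generated by precisely the two classes the sublemma controls, which is why the induction must be run by peeling off the bottom block of $\xi$ and passing to the quotient ring, rather than by peeling off the top block and passing to the subring generated by $X_3,\ldots,X_{2n}$.
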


\begin{proof}
First, by Lemma \ref{lem:sublemma, filtration lemma 2-2} and Lemma \ref{lem:permutation}, there are $\eta''\in\calK_n$ and a weakly equivariant homeomorphism $f'\co M(\eta'')\to M(\xi')$ such that $f'^*\circ\varphi$ preserves the ideal $(X_{2n-1},X_{2n})$.
Therefore, without loss of generality, we can assume that $\varphi$ preserves $(X_{2n-1},X_{2n})$.

We prove the lemma by induction on $n$.
The lemma is trivial if $n=1$.
Suppose that the lemma holds for $n-1$.
If $\varphi$ preserves the ideal $(X_{2n-1},X_{2n})$, then it descends to a graded ring isomorphism $\bar{\varphi}\co H^*(\xi_{[1,n-1]})\to H^*(\xi'_{[1,n-1]})$ (see Definition \ref{df:calK}).
By the induction hypothesis, there are $\eta'_0\in\calK_{n-1}$ and a weakly equivariant homeomorphism $f_0\co M(\eta'_0)\rightarrow M(\xi'_{[1,n-1]})$ such that $f_0^* \circ\bar{\varphi}$ preserves the ideal $(X_{2i-1},\ldots,X_{2n-2})$ for each $i=1,\ldots,n-1$.
Let $(\psi_0,\rho_0)$ be the representation of $f_0$ and put $(\psi,\rho):=(\psi_0,\rho_0)\times (E_2,e)\in GL_{2n}(\bbZ)\times R(I^{2n})$, where $e$ denotes the identity element of $R(I^2)$.
The product $(\psi_0,\rho_0)\times (E_2,e)$ is defined before Lemma \ref{wehomeo on each fiber}, but we should note that now we use a different facet labeling.
If we define a characteristic square $\eta'$ on $I^{2n}$ so that $(\psi,\rho)\cdot (E_{2n}\,\eta')=(E_{2n}\,\xi')$, then there exists a weakly equivariant homeomorphism $f\co M(\eta')\rightarrow M(\xi')$ represented by $(\psi,\rho)$.
We can easily check that $\eta'\in\calK_n$ and  $f^* \circ \varphi$ preserves the ideal $(X_{2i-1},\ldots,X_{2n})$ for each $i=1,\ldots,n$.
%
%
\end{proof}

\begin{cor}\label{1-I}
Let $\xi,\xi'\in\calK_n$ and $\varphi \co H^*(\xi) \rightarrow H^*(\xi')$ be a graded ring isomorphism.
Then there exist $\eta'\in\calK_n$ and a weakly equivariant homeomorphism $f\co M(\eta')\rightarrow M(\xi')$ such that 
$$ f^* \circ \varphi\left(
\begin{array}{c}
X_1 \\
\vdots \\
X_{2n}
\end{array}
\right)=
\left(
\begin{array}{c c c c}
E_2 &* &\cdots &*  \\
0 &\ddots &\ddots&\vdots  \\
\vdots &\ddots &\ddots &*  \\
0 &\cdots &0 &E_2
\end{array}
\right)
\left(
\begin{array}{c}
X_1 \\
\vdots \\
X_{2n}
\end{array}
\right).$$
\end{cor}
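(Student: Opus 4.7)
My plan is to apply Lemma \ref{lem:filtration lemma 2-2} first to reduce to the block upper triangular case, and then to correct the diagonal $2\times 2$ blocks to $E_2$ by using Lemma \ref{realization of Aut(kappa^2)} together with the $(\ccp{2})$-bundle structure.

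More precisely, Lemma \ref{lem:filtration lemma 2-2} produces $\eta_1 \in \calK_n$ and a weakly equivariant homeomorphism $f_1\co M(\eta_1)\to M(\xi')$ such that $\varphi_1 := f_1^* \circ \varphi$ preserves each ideal $(X_{2i-1}, \ldots, X_{2n})$. On the degree-$2$ part, $\varphi_1$ is then block upper triangular in $2 \times 2$ blocks; let $\alpha_1, \ldots, \alpha_n$ be its diagonal blocks. Since $\xi$ and $\eta_1$ both lie in $\calK_n$, the quotient of $H^*(\xi)$ (respectively $H^*(\eta_1)$) by $(X_1, \ldots, X_{2i-2}, X_{2i+1}, \ldots, X_{2n})$ is canonically isomorphic to $H^*(\kappa^2)$, with $X_{2i-1}, X_{2i}$ playing the roles of $X_1, X_2$, and $\varphi_1$ descends to an isomorphism between these quotients given by $\alpha_i$. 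Hence each $\alpha_i$ is a graded ring automorphism of $H^*(\kappa^2)$.

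Next, by Lemma \ref{realization of Aut(kappa^2)}, each $\alpha_i^{-1}$ is induced by some weakly equivariant self-homeomorphism $g_i\co M(\kappa^2)\to M(\kappa^2)$; let $(\psi_i,\rho_i) \in GL_2(\bbZ)\times R(I^2)$ be its representation, and note $(\psi_i,\rho_i)\cdot(E_2\,\kappa^2)=(E_2\,\kappa^2)$ since $g_i$ is a self-homeomorphism. Viewing $M(\eta_1)$ as an iterated $M(\kappa^2)$-bundle through its $(\kappa^2, \ldots, \kappa^2)$-type decomposition via Lemma \ref{bundle-type and type of square}, I apply Lemma \ref{wehomeo on each fiber} with the family $\{(\psi_i,\rho_i)\}_{i=1}^{n}$. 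This yields a weakly equivariant homeomorphism $g\co M(\eta')\to M(\eta_1)$ with representation $(\psi_n,\rho_n)\times\cdots\times(\psi_1,\rho_1)$, where $\eta' \in \calK_n$ by Lemma \ref{bundle-type and type of square}. Setting $f := f_1 \circ g$ gives the candidate weakly equivariant homeomorphism $f\co M(\eta')\to M(\xi')$.

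Finally, I verify that $f^* \circ \varphi = g^* \circ \varphi_1$ has the claimed block upper triangular form with $E_2$ diagonal blocks. The key claim is that $g^*$ itself is block upper triangular on degree $2$ with diagonal blocks $\alpha_i^{-1}$: each $(\psi_i,\rho_i)$ acts only on the $i$-th $M(\kappa^2)$-fiber of the tower and there realizes $g_i^* = \alpha_i^{-1}$. Granting this, $g^*\circ \varphi_1$ has diagonal blocks $\alpha_i^{-1}\alpha_i = E_2$, which is what the corollary asserts. The main obstacle in this plan lies precisely in that verification: one must track the change of basis between the $v_j$-generators of Corollary \ref{cor:cohomology ring of a quasitoric manifold} (on which $g^*$ acts via Corollary \ref{induced by rep} by the block diagonal matrix $\iota((\psi_n,\rho_n)\times\cdots\times(\psi_1,\rho_1))^{-1}$, with respect to the product facet labeling of Section 2.2) and the $X_i$-generators of Definition \ref{H^*(xi)} (using the labeling of Section 2.3), and confirm that block upper-triangularity and the desired diagonal-block structure $\alpha_i^{-1}$ are preserved under this change of basis.
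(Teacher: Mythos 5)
Your overall strategy coincides with the paper's: first Lemma \ref{lem:filtration lemma 2-2} to reach the block upper triangular form with diagonal blocks $\alpha_1,\ldots,\alpha_n\in\Aut(H^*(\kappa^2))$ (identified, as you do, via the quotients $\xi_{[i,i]}=\xi'_{[i,i]}=\kappa^2$), then Lemma \ref{realization of Aut(kappa^2)} to normalize those blocks to $E_2$. However, there is a direction slip in the normalization step which, as written, breaks the computation. Lemma \ref{wehomeo on each fiber} produces a weakly equivariant homeomorphism \emph{from} the given tower \emph{to} the new one, i.e.\ a map $h\co M(\eta_1)\to M(\eta')$ represented by $(\psi_n,\rho_n)\times\cdots\times(\psi_1,\rho_1)$; it does not produce a map $M(\eta')\to M(\eta_1)$ with that representation, and the two are not inverses of each other. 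The map you need for the composite $f=f_1\circ g$ is $g=h^{-1}\co M(\eta')\to M(\eta_1)$, which is represented by $(\psi^{-1},\rho^{-1})$ and restricts on each fiber to $g_i^{-1}$, so its $i$-th diagonal block is $(g_i^*)^{-1}=\alpha_i$, not $\alpha_i^{-1}$. The composite $g^*\circ\varphi_1$ then has diagonal blocks $\alpha_i^2$, which is not $E_2$ in general: by Lemma \ref{Aut(kappa^2)}, $\Aut(H^*(\kappa^2))$ contains $\left(\begin{smallmatrix}0&-1\\1&0\end{smallmatrix}\right)$, whose square is $-E_2$.

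The repair is small and brings you exactly to the paper's argument: realize $\alpha_i$ (rather than $\alpha_i^{-1}$) by a weakly equivariant self-homeomorphism with representation $(\psi_i,\rho_i)$, set $(\psi,\rho):=(\psi_1,\rho_1)\times\cdots\times(\psi_n,\rho_n)$, define $\eta'$ by $(E_{2n}\,\eta')=(\psi,\rho)\cdot(E_{2n}\,\eta_1)$ (which lies in $\calK_n$ because each $(\psi_i,\rho_i)$ fixes $(E_2\,\kappa^2)$), and take $f_2\co M(\eta')\to M(\eta_1)$ represented by $(\psi^{-1},\rho^{-1})$ via Proposition \ref{realization of rep}; equivalently, keep your $g_i$ inducing $\alpha_i^{-1}$ but construct $g$ directly from Proposition \ref{realization of rep} in the direction $M(\eta')\to M(\eta_1)$ with representation $(\psi,\rho)$, rather than from Lemma \ref{wehomeo on each fiber}. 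Your final verification point (compatibility of the $v_j$-basis of Corollary \ref{induced by rep} with the $X_i$-basis of Definition \ref{H^*(xi)}) is legitimate but routine, since the change of basis is block triangular for every square in $\calK_n$; the paper passes over it silently.
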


\begin{proof}
By Lemma \ref{lem:filtration lemma 2-2}, there exist $\eta''\in\calK_n$ and a weakly equivariant homeomorphism $f_1\co M(\eta'')\rightarrow M(\xi')$ such that 
$$ f_1^* \circ \varphi\left(
\begin{array}{c}
X_1 \\
\vdots \\
X_{2n}
\end{array}
\right)=
\left(
\begin{array}{c c c c}
\alpha_1 &* &\cdots &*  \\
0 &\ddots &\ddots&\vdots  \\
\vdots &\ddots &\ddots &*  \\
0 &\cdots &0 &\alpha_n
\end{array}
\right)
\left(
\begin{array}{c}
X_1 \\
\vdots \\
X_{2n}
\end{array}
\right),$$
where each $\alpha_i$ ($i=1,\ldots,n$) gives an automorphism of $H^*(\kappa^2)$ since $\xi_{[i,i]}=\xi'_{[i,i]}=\kappa^2$.
Moreover, by Lemma \ref{realization of Aut(kappa^2)}, there is a weakly equivariant self-homeomorphism $h_i\co M(\kappa^2)\to M(\kappa^2)$ such that $h_i^*=\alpha_i$.
Take the representation $(\psi_i,\rho_i)$ of $h_i$ ($i=1,\ldots,n$) and put $(\psi,\rho):=(\psi_1,\rho_1)\times\cdots\times(\psi_n,\rho_n)$.
If we define a characteristic square $\eta'$ on $I^{2n}$ so that $(E_{2n}\,\eta')=(\psi,\rho)\cdot(E_{2n}\,\eta'')$ and take a weakly equivariant homeomorphism $f_2\co M(\eta')\to M(\eta'')$ represented by $(\psi^{-1},\rho^{-1})$, then $\eta'\in\calK_n$ and
$$ f_2^* \left(
\begin{array}{c}
X_1 \\
\vdots \\
X_{2n}
\end{array}
\right)=
\left(
\begin{array}{c c c c}
\alpha_1^{-1} &0 &\cdots &*  \\
0 &\ddots &\ddots&\vdots  \\
\vdots &\ddots &\ddots &0  \\
0 &\cdots &0 &\alpha_n^{-1}
\end{array}
\right)
\left(
\begin{array}{c}
X_1 \\
\vdots \\
X_{2n}
\end{array}
\right).$$
If we put $f:=f_1\circ f_2$, then it satisfies the condition of the lemma.
\end{proof}

%

\begin{lem}\label{lemma for 1-II}
Let $\xi_0$ be a characteristic square on $I^{n-2}$ ($n\geq 3$), $\xi,\xi'$ be two $(\kappa^2,\xi_0)$-type characteristic squares on $I^n$, and $\varphi \co H^*(\xi) \rightarrow H^*(\xi')$ be a graded ring isomorphism such that 
$$ \varphi\left(
\begin{array}{c}
X_1 \\
\vdots \\
X_n
\end{array}
\right)=
\left(
\begin{array}{c c}
E_2 &A  \\
0 &E_{n-2}
\end{array}
\right)
\left(
\begin{array}{c}
X_1 \\
\vdots \\
X_n
\end{array}
\right)$$
where $A$ denotes some $(2\times (n-2))$-matrix of integers.
Then we have $A=0$ and $\xi=\xi'$.
\end{lem}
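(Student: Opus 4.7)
The plan is to translate the hypothesis into component form and exploit the ring-homomorphism condition $\varphi(\calI_\xi) \subseteq \calI_{\xi'}$. Write $c_j := \xi_{1,j}$ and $d_j := \xi_{2,j}$ (and analogously $c'_j, d'_j$ for $\xi'$) for $j \geq 3$, and put $a_j := A_{1,j-2}$, $b_j := A_{2,j-2}$. The matrix hypothesis then reads $\varphi(X_1) = X_1 + \sum_{j \geq 3} a_j X_j$, $\varphi(X_2) = X_2 + \sum_{j \geq 3} b_j X_j$, and $\varphi(X_j) = X_j$ for $j \geq 3$. Since $\xi$ and $\xi'$ share the bottom-right block $\xi_0$, we have $u_i(\xi) = u_i(\xi')$ for $i \geq 3$, so $\varphi(u_i(\xi) X_i) = u_i(\xi') X_i \in \calI_{\xi'}$ automatically; the content of the lemma lies entirely in requiring $\varphi(u_1(\xi)X_1)$ and $\varphi(u_2(\xi)X_2)$ to lie in $\calI_{\xi'}$.

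The key observation is that in degree $4$, the ideal $\calI_{\xi'}$ is a free $\bbZ$-module with basis $\{u_i(\xi')X_i\}_{i=1}^{n}$: each $u_i(\xi')X_i$ contains $X_i^2$ with coefficient $1$, and this monomial appears in no other $u_k(\xi')X_k$. I would write $\varphi(u_1(\xi)X_1) = \sum_i \alpha_i \, u_i(\xi')X_i$ and match coefficients monomial by monomial. Matching $X_1^2$ and $X_1X_2$ gives $\alpha_1 = 1$ and $\alpha_2 = 0$. Next, the coefficient of $X_2X_j$ for $j \geq 3$ on the left is $2a_j$ (arising solely from the cross term $2X_2 \cdot a_j X_j$), while on the right it equals $\alpha_2 d'_j = 0$, since $u_i(\xi')X_i$ for $i \geq 3$ carries no $X_2$ factor. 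This forces $a_j = 0$ for all $j \geq 3$.

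Performing the symmetric analysis of $\varphi(u_2(\xi)X_2)$, now simplified by $a_j = 0$, yields $\alpha'_1 = 0$ and $\alpha'_2 = 1$; comparing the $X_1 X_j$ coefficients ($b_j$ on the left versus $\alpha'_1 c'_j = 0$ on the right) then forces $b_j = 0$, so $A = 0$. Once $A = 0$, reading off the remaining $X_1 X_j$ coefficients in $\varphi(u_1(\xi) X_1)$ gives $c_j = c'_j$, and the $X_2 X_j$ coefficients in $\varphi(u_2(\xi) X_2)$ give $d_j = d'_j$ for $j \geq 3$. Combined with the shared block $\xi_0$, this yields $\xi = \xi'$. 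The main obstacle is merely the careful coefficient bookkeeping across the two expansions; conceptually, the unique-leading-monomial structure of the generators $u_i(\xi') X_i$ trivializes the linear algebra in $\calI_{\xi'}$, so no deeper idea is required.
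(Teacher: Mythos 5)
Your proof is correct and follows essentially the same route as the paper: both arguments come down to expanding $\varphi(u_1(\xi)X_1)$ and $\varphi(u_2(\xi)X_2)$ and comparing degree-$4$ coefficients, extracting $2a_j=0$ from the $X_2X_j$ terms, $b_j=0$ from the $X_1X_j$ terms, and then the equalities of the off-diagonal entries. The only cosmetic difference is that you match coefficients against the free basis $\{u_i(\xi')X_i\}$ of the degree-$4$ part of the ideal inside the polynomial ring, whereas the paper reduces modulo $\calI_{\xi'}$ and the submodule $W$ spanned by $X_pX_q$ ($p,q\geq 3$) and compares coefficients in a basis of the quotient --- the same computation organized dually.
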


\begin{proof}
We denote the first rows of $A$, $\xi$, $\xi'$ by $(a_3,\ldots,a_n)$, $(1,2,s_3,\ldots,s_n)$, $(1,2,s'_3,\ldots,s'_n)$ respectively.
Similarly, we denote their second rows by $(b_3,\ldots,b_n)$, $(1,1,t_3,\ldots,t_n)$, $(1,1,t'_3,\ldots,t'_n)$.
Then, in $H^*(\xi')$, we have
\begin{align*}
& \varphi(X_1(X_1+2 X_2+s_3 X_3+\cdots+s_n X_n)) \\
& = ( X_1+a_3 X_3+\cdots+a_n X_n )\{ X_1+2 X_2+ (a_3+2 b_3+s_3) X_3+\cdots+ (a_n+2 b_n+s_n) X_n \} \\
& = X_1 \{ (2 a_3+2 b_3+s_3-s'_3)X_3+\cdots+(2 a_n+2 b_n+s_n-s'_n)X_n \} + \\
& \quad \quad \quad \quad  \quad \quad 2 X_2\{ a_3 X_3+\cdots+a_n X_n \} + (\text{a polynomial in } X_3,\ldots,X_n)=0
\end{align*}
and
\begin{align*}
& \varphi(X_2(X_1+X_2+t_3 X_3+\cdots+t_n X_n)) \\
& = ( X_2+b_3 X_3+\cdots+b_n X_n )\{ X_1+X_2+ (a_3+b_3+t_3) X_3+\cdots+ (a_n+b_n+t_n) X_n \} \\
& = X_2 \{ (a_3+2 b_3+t_3-t'_3)X_3+\cdots+(a_n+2 b_n+t_n-t'_n)X_n \} + \\
& \quad \quad \quad \quad  \quad \quad X_1\{ b_3 X_3+\cdots+b_n X_n \} + (\text{a polynomial in } X_3,\ldots,X_n) =0.
\end{align*}
If we define $W$ as the submodule of $H^4(\xi')$ generated by $X_p X_q$ ($p,q\geq 3$), then ${X_2}^2$ and $X_i X_j$ ($i=1,2,\,j=3,\ldots,n$) form a basis of $H^4(\xi')/W$. 
Therefore we obtain $a_i=b_i=s_i-s'_i=t_i-t'_i=0$, i.e. $A=0$ and $\xi=\xi'$.
\end{proof}

\begin{cor}\label{1-II}
Let $\xi,\xi'\in\calK_n$ and $\varphi \co H^*(\xi) \rightarrow H^*(\xi')$ be a graded ring isomorphism such that 
$$ \varphi\left(
\begin{array}{c}
X_1 \\
\vdots \\
X_{2n}
\end{array}
\right)=
\left(
\begin{array}{c c c c}
E_2 &A_{1,2} &\cdots &A_{1,n}  \\
0 &\ddots &\ddots&\vdots  \\
\vdots &\ddots &\ddots &A_{n-1,n}  \\
0 &\cdots &0 &E_2
\end{array}
\right)
\left(
\begin{array}{c}
X_1 \\
\vdots \\
X_{2n}
\end{array}
\right).$$
Then $A_{i,j}=0$ ($1\leq i<j\leq n$) and $\xi=\xi'$.
In particular, $\varphi$ is induced by $\id_{M(\xi)}$.
\end{cor}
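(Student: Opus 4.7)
The plan is to argue by induction on $n$. The base case $n = 1$ is immediate: there are no off-diagonal blocks, the hypothesis forces $\varphi$ to fix the two generators of $H^*(\kappa^2)$, and $\xi = \xi' = \kappa^2$. Hence fix $n \geq 2$ and assume the corollary holds when $n$ is replaced by $n - 1$.

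The key observation is that the block-upper-triangular form of $\varphi$ implies $\varphi(X_i)$ lies in the subring generated by $X_3, \ldots, X_{2n}$ for every $i \geq 3$. By Definition \ref{df:calK} this subring is $H^*(\xi'_{[2,n]})$, and similarly the analogous subring of $H^*(\xi)$ is $H^*(\xi_{[2,n]})$. Thus $\varphi$ restricts to a graded ring homomorphism $H^*(\xi_{[2,n]}) \to H^*(\xi'_{[2,n]})$; since the inverse of a unipotent block-upper-triangular matrix has the same shape, the restriction is an isomorphism. Its matrix on the generators $X_3, \ldots, X_{2n}$ is the lower-right $(n-1) \times (n-1)$ block of $\varphi$, which is block-upper-triangular with $E_2$'s on the diagonal --- precisely the form assumed in the corollary, but now for $\calK_{n-1}$. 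The inductive hypothesis applied to $\xi_{[2,n]}, \xi'_{[2,n]} \in \calK_{n-1}$ therefore gives $\xi_{[2,n]} = \xi'_{[2,n]}$ and $A_{i,j} = 0$ for all $2 \leq i < j \leq n$.

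After these reductions, $\varphi$ fixes each $X_i$ for $i \geq 3$, so its matrix on the full set of generators has the form $\bigl( \begin{smallmatrix} E_2 & A \\ 0 & E_{2n-2} \end{smallmatrix} \bigr)$, where $A$ is the $2 \times (2n-2)$ matrix built from $A_{1,2}, \ldots, A_{1,n}$. Both $\xi$ and $\xi'$ are $(\kappa^2, \xi_0)$-type characteristic squares on $I^{2n}$ for the common $\xi_0 := \xi_{[2,n]} = \xi'_{[2,n]}$ on $I^{2n-2}$, and since $2n \geq 4 \geq 3$, Lemma \ref{lemma for 1-II} applies directly and forces $A = 0$ together with $\xi = \xi'$. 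Combining this with the previous step, $\varphi$ acts as the identity on every generator, so $\varphi = \id_{H^*(\xi)}$, which is visibly induced by $\id_{M(\xi)}$. The main obstacle --- the delicate integer analysis underlying the vanishing of the off-diagonal block --- has already been absorbed into Lemma \ref{lemma for 1-II}; what remains in the corollary is only the straightforward inductive bookkeeping described above.
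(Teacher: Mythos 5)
Your proposal is correct and follows essentially the same route as the paper: induct on $n$, restrict $\varphi$ to the subrings $H^*(\xi_{[2,n]})\to H^*(\xi'_{[2,n]})$ to kill the blocks $A_{i,j}$ with $i\geq 2$ and identify $\xi_{[2,n]}=\xi'_{[2,n]}$, then invoke Lemma \ref{lemma for 1-II} to eliminate the first block row and conclude $\xi=\xi'$. The only cosmetic difference is that you start the induction at the trivial case $n=1$ whereas the paper starts at $n=2$ (itself an immediate application of Lemma \ref{lemma for 1-II}); both are fine.
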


\begin{proof}
We prove the corollary by induction on $n$.
If $n=2$, the corollary is immediate from Lemma \ref{lemma for 1-II}.
Suppose that the corollary holds for $n-1$.
Since $\varphi$ restricts to a graded ring isomorphism from $H^*(\xi_{[2,n]})$ to $H^*(\xi'_{[2,n]})$, by the induction hypothesis, we obtain $A_{i,j}=0$ for $2\leq i<j\leq n$ and $\xi_{[2,n]}=\xi'_{[2,n]}$.
Then we have $A_{1,j}=0$ ($1<j\leq n$) and $\xi=\xi'$ by Lemma \ref{lemma for 1-II}.
\end{proof}

%

Then the following theorem is immediate from Corollary \ref{1-I} and Corollary \ref{1-II}.

\begin{thm}\label{main thm 1'}
Let $\xi,\xi'\in\calK_n$ and $\varphi \co H^*(\xi) \rightarrow H^*(\xi')$ be a graded ring isomorphism.
Then there exists a weakly equivariant homeomorphism $f\co M(\xi')\to M(\xi)$ such that $\varphi=f^*$.
\end{thm}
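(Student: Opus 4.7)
The plan is to string Corollary \ref{1-I} and Corollary \ref{1-II} together. Given the graded ring isomorphism $\varphi\co H^*(\xi)\to H^*(\xi')$, I would first invoke Corollary \ref{1-I} to produce some $\eta'\in\calK_n$ together with a weakly equivariant homeomorphism $f_1\co M(\eta')\to M(\xi')$ such that the composite
$$ \Phi:=f_1^*\circ\varphi\co H^*(\xi)\longrightarrow H^*(\eta') $$
is represented (with respect to the generators $X_1,\ldots,X_{2n}$) by an upper block-triangular matrix whose diagonal blocks are the $2\times 2$ identity matrices.

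Next, I would feed this $\Phi$ into Corollary \ref{1-II}, whose hypothesis is exactly that the isomorphism has this upper block-triangular normal form with $E_2$ on the diagonal. That corollary then forces the off-diagonal blocks to vanish and moreover yields $\xi=\eta'$; consequently $\Phi$ is the identity map on $H^*(\xi)$, i.e.\ $f_1^*\circ\varphi=\id_{H^*(\xi)}$. Rearranging gives $\varphi=(f_1^*)^{-1}=(f_1^{-1})^*$.

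Finally I would set $f:=f_1^{-1}\co M(\xi')\to M(\eta')=M(\xi)$. The inverse of a weakly equivariant homeomorphism is again weakly equivariant (if $f_1$ is represented by $(\psi,\rho)$, then $f_1^{-1}$ is weakly equivariant with respect to $\psi^{-1}$), so $f$ has the required properties and $f^*=\varphi$ by construction.

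The argument is essentially a bookkeeping exercise once the two corollaries are in hand; there is no genuine obstacle at this stage. The substantive work has already been done: Corollary \ref{1-I} reduces an arbitrary $\varphi$ to the upper block-triangular normal form (via the key Lemma \ref{lem:sublemma, filtration lemma 2-2}, the permutation trick of Lemma \ref{lem:permutation}, and the realization of $\Aut(H^*(\kappa^2))$ in Lemma \ref{realization of Aut(kappa^2)}), while Corollary \ref{1-II} handles rigidity of that normal form via Lemma \ref{lemma for 1-II}. The only thing to verify carefully in writing up is that the directions of the arrows line up (passing from $M(\eta')\to M(\xi')$ to its inverse $M(\xi')\to M(\xi)$) and that taking inverses preserves weak equivariance.
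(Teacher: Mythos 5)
Your proposal is correct and is exactly the argument the paper intends: the paper itself derives Theorem \ref{main thm 1'} as "immediate from Corollary \ref{1-I} and Corollary \ref{1-II}," and your write-up just fills in the routine bookkeeping (composing with $f_1^*$, applying Corollary \ref{1-II} to conclude $\eta'=\xi$ and $f_1^*\circ\varphi=\id$, and passing to the inverse homeomorphism).
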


By Lemma \ref{lem:calK} and Theorem \ref{main thm 1'}, we obtain Theorem \ref{main thm 1}.

\section{Computation of $\calM^{\rm weh}_{I^3}$}

Toward the proof of Theorem \ref{main thm 2}, in this section, we list all the quasitoric manifolds over $I^3$ up to weakly equivariant homeomorphism.
We denote by $\calM^{\rm weh}_{I^3}$ the set of weakly equivariant homeomorphism classes of quasitoric manifolds over $I^3$, as in Corollary \ref{classification up to wehomeo}.

\begin{notation}
To compute $\calM_{I^3}^{\rm weh}$, we use the following notations.
Recall that we denote by $\Xi_3$ the set of characteristic squares on $I^3$.
\begin{itemize}
\item  We denote by $\phi \co \Xi_3\to \calM^{\rm weh}_{I^3}$ the surjection given by $\xi\mapsto M(\xi)$.
 \item For $V_1,V_2,V_3\subseteq \bbZ^2$, we define
 $$\Xi(V_1,V_2,V_3):=\Set{
\left(
\begin{array}{ccc}
1 &x_1 &x_2  \\
y_1 &1 &x_3  \\
y_2 &y_3 &1 
\end{array}
\right)
\in \Xi_3 | \left(
\begin{array}{c}
x_i  \\
y_i
\end{array}
\right)\in V_i\text{ for } i=1,2,3
}.$$
 \item We put $P_+:=\Set{\left(
\begin{array}{c}
k  \\
0
\end{array}
\right)| k\in\bbZ}$,
$P_-:=\Set{\left(
\begin{array}{c}
0  \\
k
\end{array}
\right)| k\in\bbZ}$,
$N_+:=\left\{\pm\left(
\begin{array}{c}
2  \\
1
\end{array}
\right)\right\}$,
$N_-:=\left\{\pm\left(
\begin{array}{c}
1  \\
2
\end{array}
\right)\right\}$,
$C_0 := P_+ \cup P_-$, and $C_2:=N_+ \cup N_-$.
\item We put $C_{\epsilon_1,\epsilon_2,\epsilon_3}:=\Xi(C_{\epsilon_1},C_{\epsilon_2},C_{\epsilon_3})$ 
for $(\epsilon_1,\epsilon_2,\epsilon_3)\in \{0,2\}^3$.
\item We define $\sigma_i,\tau_i \in \Aut (I^3)$ ($i=1,2,3$) by
$$ \sigma_1:=(1\ 2)(4\ 5),\ \sigma_2:=(1\ 3)(4\ 6),\ \sigma_3:=(2\ 3)(5\ 6),\ \tau_i:=(i\ i+3)$$
and $\delta_i\in GL_3(\bbZ)$ ($i=1,2,3$) by 
$$ \delta_1:=\left(\begin{array}{ccc}0&1&0\\1&0&0\\0&0&1\end{array}\right),\ 
\delta_2:=\left(\begin{array}{ccc}0&0&1\\0&1&0\\1&0&0\end{array}\right),\ 
\delta_3:=\left(\begin{array}{ccc}1&0&0\\0&0&1\\0&1&0\end{array}\right).$$
Additionally, we take $\mu_i\in(\bbZ/2)^6$ ($i=1,2,3$) such that the $i$-th and ($i+3$)-th components are $-1$ and the other components are $1$, and $\nu_i\in GL_3(\bbZ)$ ($i=1,2,3$) which acts on $\bbZ^3$ by multiplication by $-1$ on the $i$-th component.
\end{itemize}
\end{notation}

\begin{rem}
Since $(\delta_i,\sigma_i)\cdot(E_3\, \xi)$ and $(\nu_i,\mu_i)\cdot(E_3\, \xi)$ (see Definition \ref{df:action on Lambda_P}) are in the form $(E_3\ \xi')$ for each $\xi\in\Xi_3$ and $i=1,2,3$, we can regard that $(\delta_i,\sigma_i)$ and $(\nu_i,\mu_i)$ act on $\Xi_3$.
\end{rem}

\begin{lem}\label{lem:first restriction}
The restriction of $\phi$ to $C_{0,0,0}\cup C_{0,0,2}\cup C_{0,2,2}\cup C_{2,2,2}$ is surjective.
\end{lem}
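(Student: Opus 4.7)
The plan is to show that for any $\xi \in \Xi_3$, an appropriate composition of the operations $(\delta_i,\sigma_i)$ ($i=1,2,3$) moves $\xi$ into a characteristic square lying in $C_{0,0,0}\cup C_{0,0,2}\cup C_{0,2,2}\cup C_{2,2,2}$. Since each $(\delta_i,\sigma_i)$ induces a weakly equivariant homeomorphism between the corresponding quasitoric manifolds (Proposition \ref{realization of rep}), and every element of $\calM^{\rm weh}_{I^3}$ is represented by $M(\xi)$ for some $\xi \in \Xi_3$, this will yield the surjectivity.

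The first step is to show that $\Xi_3 = \bigcup_{(\epsilon_1,\epsilon_2,\epsilon_3)\in\{0,2\}^3} C_{\epsilon_1,\epsilon_2,\epsilon_3}$. Unpacking the nonsingularity condition on $(E_3\,\xi)$, the only vertices of $I^3$ whose constraint involves a single pair $(x_i,y_i)$ are the three vertices $F_i \cap F_{j+3}\cap F_{k+3}$ with $\{i,j,k\}=\{1,2,3\}$; there the $2\times 2$ minor condition reads $1 - x_i y_i = \pm 1$, forcing $x_i y_i \in \{0,2\}$. The case $x_i y_i = 0$ places $(x_i,y_i)$ in $C_0 = P_+\cup P_-$, while $x_i y_i = 2$ forces $(x_i,y_i) \in \{\pm(2,1),\pm(1,2)\} = C_2$.

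The second step is to compute the effect of each $(\delta_i,\sigma_i)$ on the three pairs $(x_1,y_1), (x_2,y_2), (x_3,y_3)$ of $\xi$. Direct matrix multiplication shows that $(\delta_1,\sigma_1)$ sends them to $(y_1,x_1), (x_3,y_3), (x_2,y_2)$; that $(\delta_2,\sigma_2)$ sends them to $(y_3,x_3), (y_2,x_2), (y_1,x_1)$; and that $(\delta_3,\sigma_3)$ sends them to $(x_2,y_2), (x_1,y_1), (y_3,x_3)$. Because both $C_0$ and $C_2$ are stable under the flip $(x,y)\mapsto(y,x)$, these three operations act on the triple $(\epsilon_1,\epsilon_2,\epsilon_3)$ by the transpositions $(2\,3)$, $(1\,3)$, and $(1\,2)$ respectively.

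Since these transpositions generate $\mathfrak{S}_3$, a suitable composition of the $(\delta_i,\sigma_i)$ sorts the $\epsilon$-triple into non-decreasing order, and the only non-decreasing triples in $\{0,2\}^3$ are $(0,0,0), (0,0,2), (0,2,2), (2,2,2)$. Thus any $\xi \in \Xi_3$ has $\phi(\xi) = \phi(\xi')$ for some $\xi'$ in the required union, proving surjectivity. The only mildly delicate step is the matrix computation for the three operations, which is routine but requires care in tracking how $\iota(\sigma_i)$ permutes columns and how the row-swap by $\delta_i$ restores the normalized shape $(E_3\,\xi')$; everything else is bookkeeping.
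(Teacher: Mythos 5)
Your proof is correct and follows the same route as the paper: decompose $\Xi_3$ as $\bigcup_{\epsilon\in\{0,2\}^3}C_{\epsilon_1,\epsilon_2,\epsilon_3}$ via the nonsingularity conditions $1-x_iy_i=\pm1$, then observe that the $(\delta_i,\sigma_i)$ permute the indices $(\epsilon_1,\epsilon_2,\epsilon_3)$ transitively up to reordering. The only difference is that you carry out the matrix computations showing how each $(\delta_i,\sigma_i)$ acts on the pairs $(x_i,y_i)$, which the paper simply asserts; your computations agree with the paper's claimed identities such as $(\delta_1,\sigma_1)\cdot C_{\epsilon_1,\epsilon_2,\epsilon_3}=C_{\epsilon_1,\epsilon_3,\epsilon_2}$.
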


\begin{proof}
Let $\xi$ be a characteristic square on $I^3$ and write
$$\xi=\left(
\begin{array}{ccc}
1 &x_1 &x_2  \\
y_1 &1 &x_3  \\
y_2 &y_3 &1 
\end{array}
\right).$$
From the nonsingularity condition, $1-x_i y_i=\pm 1$ ($i=1,2,3$).
This implies that each ${}^t(x_i,y_i)$ belongs to $C_0$ or $C_2$.
Therefore we obtain $$\Xi_3=\bigcup_{\epsilon_1,\epsilon_2,\epsilon_3\in\{0,2\}}C_{\epsilon_1,\epsilon_2,\epsilon_3}.$$
Since $(\delta_1,\sigma_1)\cdot C_{\epsilon_1,\epsilon_2,\epsilon_3} =C_{\epsilon_1,\epsilon_3,\epsilon_2}$, we have $\phi(C_{\epsilon_1,\epsilon_2,\epsilon_3})=\phi(C_{\epsilon_1,\epsilon_3,\epsilon_2})$.
Similarly, we have $\phi(C_{\epsilon_1,\epsilon_2,\epsilon_3})=\phi(C_{\epsilon_3,\epsilon_2,\epsilon_1})$ and $\phi(C_{\epsilon_1,\epsilon_2,\epsilon_3})=\phi(C_{\epsilon_2,\epsilon_1,\epsilon_3})$.
Hence we see that
$$ \calM_{I^3}^{\rm weh}=\bigcup_{\epsilon_1,\epsilon_2,\epsilon_3\in\{0,2\}}\phi(C_{\epsilon_1,\epsilon_2,\epsilon_3})
=\phi(C_{0,0,0})\cup \phi(C_{0,0,2})\cup \phi(C_{0,2,2})\cup \phi(C_{2,2,2}) .$$
Thus we obtain the lemma.
\end{proof}

Let us put $P_{s_1,s_2,s_3}:=\Xi(P_{s_1},P_{s_2},P_{s_3})$
where $s_i\in\{+,-\}$ ($i=1,2,3$).
Then we have $$C_{0,0,0}=\bigcup_{s_1,s_2,s_3\in \{+,-\}} P_{s_1,s_2,s_3}. $$
Moreover, $(\delta_i,\sigma_i)$ ($i=1,2,3$) act as follows.
\[
\xymatrix {
P_{+,+,+} \ar[1,0]_{(\delta_3,\sigma_3)} \ar[1,2]^(0.6){(\delta_2,\sigma_2)} \ar[0,2]^{(\delta_1,\sigma_1)} && P_{-,+,+} \ar[1,2]^(0.6){(\delta_3,\sigma_3)} \ar[0,2]^{(\delta_2,\sigma_2)} \ar[0,-2]
&& P_{-,-,+} \ar[0,-2]
& P_{+,-,+} \ar[1,0]^{(\delta_2,\sigma_2)} \\
P_{+,+,-} \ar[-1,0] &&P_{-,-,-} \ar[-1,-2]
&& P_{+,-,-} \ar[-1,-2] & P_{-,+,-} \ar[-1,0]
}
\]
Thus we obtain the following lemma.

\begin{lem}
$\phi(C_{0,0,0})=\phi(P_{+,+,+}\cup P_{+,-,+})$.
\end{lem}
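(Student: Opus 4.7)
The plan is to verify the orbit structure implicit in the diagram preceding the lemma, and then use the invariance of $\phi$ under the $(\delta_i,\sigma_i)$-action.

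First, I would compute $(\delta_i,\sigma_i)\cdot(E_3\,\xi)=\delta_i\cdot(E_3\,\xi)\cdot\iota(\sigma_i)$ explicitly for each $i$ and read off the new pairs $(x_j',y_j')$. Each generator permutes the three pair-positions while flipping coordinates within some of the pairs. Since a coordinate-swap in a pair of $C_0$ exchanges $P_+\leftrightarrow P_-$, the induced action on the sign triples $(s_1,s_2,s_3)\in\{+,-\}^3$ is
\begin{align*}
(\delta_1,\sigma_1)&\co (s_1,s_2,s_3)\mapsto (-s_1,s_3,s_2), \\
(\delta_2,\sigma_2)&\co (s_1,s_2,s_3)\mapsto (-s_3,-s_2,-s_1), \\
(\delta_3,\sigma_3)&\co (s_1,s_2,s_3)\mapsto (s_2,s_1,-s_3).
\end{align*}

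Second, I would enumerate the orbits of the group generated by these three maps on $\{+,-\}^3$. Starting from $(+,+,+)$, the three generators reach $(-,+,+)$, $(+,+,-)$, and $(-,-,-)$; iterating once more from $(-,+,+)$ yields $(-,-,+)$ and $(+,-,-)$, exhausting six of the eight triples. The remaining two, $(+,-,+)$ and $(-,+,-)$, are interchanged by each of the three generators and form the second orbit, of size two.

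Third, by Proposition \ref{realization of rep} each $(\delta_i,\sigma_i)$ induces a weakly equivariant homeomorphism $M(\xi)\to M((\delta_i,\sigma_i)\cdot\xi)$, so $\phi$ is constant on these orbits. Combined with the two-orbit decomposition, this gives $\phi(C_{0,0,0})=\phi(P_{+,+,+})\cup\phi(P_{+,-,+})=\phi(P_{+,+,+}\cup P_{+,-,+})$, as claimed. The main obstacle is the matrix bookkeeping in the first step, particularly for $(\delta_2,\sigma_2)$: the non-adjacent transposition $\sigma_2=(1\ 3)(4\ 6)$ produces an extra coordinate-swap on all three pairs, not present for the adjacent $\sigma_1$ and $\sigma_3$; any sign slip there corrupts the orbit count. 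Once the action formulas above are correctly checked, the orbit enumeration is a short finite verification.
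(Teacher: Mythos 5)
Your proof is correct and takes essentially the same route as the paper: the three action formulas you derive for $(\delta_i,\sigma_i)$ on the sign triples, and the resulting two-orbit decomposition of $\{+,-\}^3$ (a six-element orbit through $(+,+,+)$ and the two-element orbit $\{(+,-,+),(-,+,-)\}$), are exactly what the diagram preceding the lemma records, with Proposition \ref{realization of rep} supplying the weakly equivariant homeomorphisms. I verified your computation of $(\delta_2,\sigma_2)$, including the coordinate swap it induces on all three pairs; it is right.
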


Suppose that $$\xi=\left(
\begin{array}{ccc}
1 &x_1 &0  \\
0 &1 &x_3  \\
x_2 &0 &1 
\end{array}
\right) \in P_{+,-,+}.$$
Then, from the nonsingularity condition, we have $x_1 x_2 x_3 =-1\pm 1$.
\begin{itemize}
 \item If $x_1 x_2 x_3 =0$, then $\xi \in P_{-,-,+}\cup P_{+,+,+} \cup P_{+,-,-}$.
 \item If $x_1 x_2 x_3 =-2$, then there exists $\psi\in GL_3(\bbZ)$ such that $(\psi,\tau_1)\cdot(E_3\ \xi)=(E_3\ \xi')$ where
$$\xi'=
\left(
\begin{array}{ccc}
1 &x_1 &0  \\
0 &1 &x_3  \\
-x_2 &-x_1 x_2 &1 
\end{array}
\right)\in C_{0,0,2}.
 $$
\end{itemize}
Thus we obtain the following lemma.

\begin{lem}\label{lem:C_{0,0,0}}
Put $A_1:=P_{+,+,+}$.
Then we have $\phi(C_{0,0,0})\subseteq \phi(A_1) \cup \phi(C_{0,0,2})$.
\end{lem}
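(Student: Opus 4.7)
The plan is to combine the preceding unnamed lemma, which gives $\phi(C_{0,0,0})=\phi(P_{+,+,+}\cup P_{+,-,+})$, with the case analysis laid out in the paragraph immediately before the statement. Since $A_1=P_{+,+,+}$ by definition, and $\phi(P_{+,+,+})\subseteq\phi(A_1)$ trivially, the entire content of the lemma reduces to showing the single inclusion
\[
\phi(P_{+,-,+})\subseteq \phi(A_1)\cup\phi(C_{0,0,2}).
\]

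I would fix an arbitrary $\xi\in P_{+,-,+}$ and write it in the explicit shape
\[
\xi=\begin{pmatrix}1 & x_1 & 0\\ 0 & 1 & x_3\\ x_2 & 0 & 1\end{pmatrix}
\]
as in the excerpt. The nonsingularity condition on $\xi$ forces $x_1 x_2 x_3 =-1\pm 1$, so it suffices to handle the two cases $x_1 x_2 x_3=0$ and $x_1 x_2 x_3=-2$ separately.

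In the first case, one of $x_1,x_2,x_3$ vanishes, which immediately places $\xi$ in one of $P_{-,-,+}$, $P_{+,+,+}$, or $P_{+,-,-}$ (depending on which coordinate vanishes, the corresponding pair $(x_i,y_i)$ enters the opposite $P_\pm$ via $(0,0)\in P_+\cap P_-$). The diagram of $(\delta_i,\sigma_i)$-arrows used in the proof of the unnamed lemma connects all three of these sets to $P_{+,+,+}$, so $\phi(\xi)\in\phi(P_{+,+,+})=\phi(A_1)$. In the second case, I invoke the explicit computation already displayed in the excerpt: there exists $\psi\in GL_3(\bbZ)$ with $(\psi,\tau_1)\cdot(E_3\ \xi)=(E_3\ \xi')$, where
\[
\xi'=\begin{pmatrix}1 & x_1 & 0\\ 0 & 1 & x_3\\ -x_2 & -x_1 x_2 & 1\end{pmatrix};
\]
a direct check using $x_1 x_2 x_3=-2$ shows that the pair $(x_3,-x_1 x_2)$ satisfies $x_3\cdot(-x_1 x_2)=2$, hence lies in $N_+\cup N_-=C_2$, so $\xi'\in C_{0,0,2}$. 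By Definition \ref{df:action on Lambda_P} and Proposition \ref{realization of rep}, $\phi(\xi)=\phi(\xi')\in\phi(C_{0,0,2})$.

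Combining the two cases gives $\phi(\xi)\in\phi(A_1)\cup\phi(C_{0,0,2})$ for every $\xi\in P_{+,-,+}$, completing the proof. I do not anticipate a real obstacle: the two verifications (identifying $\xi'\in C_{0,0,2}$ and tracing $P_{-,-,+},P_{+,-,-}$ back to the $P_{+,+,+}$-orbit) are both routine, and the only mildly subtle point is to recognize that a zero entry among $x_1,x_2,x_3$ simultaneously satisfies both $P_+$ and $P_-$, letting $\xi$ be viewed in more than one $P_{s_1,s_2,s_3}$.
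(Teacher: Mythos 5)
Your proposal is correct and follows exactly the paper's own argument: the paper's ``proof'' of this lemma is precisely the preceding unnamed lemma $\phi(C_{0,0,0})=\phi(P_{+,+,+}\cup P_{+,-,+})$ together with the displayed case analysis on $x_1x_2x_3=-1\pm1$ for $\xi\in P_{+,-,+}$, which you reproduce and whose routine verifications (that a vanishing entry puts $\xi$ in a set connected to $P_{+,+,+}$ by the $(\delta_i,\sigma_i)$-diagram, and that $x_3\cdot(-x_1x_2)=2$ places $\xi'$ in $C_{0,0,2}$) you correctly fill in. No gaps.
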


For $C_{0,0,2}$, we prove the following lemma first. Put
$$C'_{0,0,2}:=\left\{\left(
\begin{array}{ccc}
1 &x_1 &x_2  \\
y_1 &1 &2  \\
y_2 &1 &1 
\end{array}
\right)\in C_{0,0,2}
\right\}.$$

\begin{lem}
$\phi(C'_{0,0,2})=\phi(C_{0,0,2})$.
\end{lem}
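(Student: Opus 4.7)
The goal is to establish $\phi(C_{0,0,2})\subseteq \phi(C'_{0,0,2})$, since the reverse inclusion is immediate. The plan is to exhibit, for an arbitrary $\xi\in C_{0,0,2}$, a composition of the $(GL_3(\bbZ)\times R(I^3))$-actions $(\delta_i,\sigma_i)$ and $(\nu_i,\mu_i)$ that sends $\xi$ into $C'_{0,0,2}$; by Corollary \ref{classification up to wehomeo}, such actions preserve $\phi$. For $\xi\in C_{0,0,2}$ the first two off-diagonal pairs $(x_1,y_1),(x_2,y_2)$ lie in $C_0$, while $(x_3,y_3)\in C_2=\{\pm(2,1)^T,\pm(1,2)^T\}$; the condition defining $C'_{0,0,2}$ is the single equality $(x_3,y_3)=(2,1)^T$, so the whole problem reduces to normalising this last pair.

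The first concrete step is to compute the effect of two chosen generators on the triple $\bigl((x_1,y_1),(x_2,y_2),(x_3,y_3)\bigr)$ by multiplying out $\psi\cdot(E_3\,\xi)\cdot\iota(\rho)$. A short calculation, analogous to the one carried out in the preceding lemmas for $(\delta_i,\sigma_i)$, yields: $(\delta_3,\sigma_3)$ swaps $(x_1,y_1)\leftrightarrow(x_2,y_2)$ and transposes the last pair to $(y_3,x_3)$; and $(\nu_2,\mu_2)$ negates $(x_1,y_1)$ and $(x_3,y_3)$ while fixing $(x_2,y_2)$. Since $C_0$ is closed under the sign change $v\mapsto -v$ and the first two pairs play symmetric roles, both actions stabilise the condition $(x_1,y_1),(x_2,y_2)\in C_0$.

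With these formulae the reduction is a four-case table on the value of $(x_3,y_3)$: do nothing if it equals $(2,1)^T$; apply $(\nu_2,\mu_2)$ if it equals $(-2,-1)^T$; apply $(\delta_3,\sigma_3)$ if it equals $(1,2)^T$; and apply the composition $(\delta_3,\sigma_3)\circ(\nu_2,\mu_2)$ if it equals $(-1,-2)^T$. In each case the resulting matrix lies in $C_{0,0,2}$ and has $(x_3,y_3)=(2,1)^T$, so it belongs to $C'_{0,0,2}$. The only non-mechanical part of the argument is the verification of the two action formulae in the previous paragraph; this is pure bookkeeping analogous to the transition-matrix calculations already performed in the proof of Lemma \ref{lem:first restriction}, so no substantial obstacle is expected.
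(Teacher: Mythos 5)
Your proof is correct and follows essentially the same route as the paper: the paper first applies $(\delta_3,\sigma_3)$ to reduce $N_-$ to $N_+$ and then a sign-flip (it uses $(\nu_3,\mu_3)$ where you use $(\nu_2,\mu_2)$; both negate the third pair while preserving $C_0\times C_0$ in the first two slots) to normalise $(x_3,y_3)$ to $(2,1)^T$. Your four-case table is just an unfolding of the paper's two successive reductions, and your computed action formulae are accurate.
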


\begin{proof}
Since $(\delta_3,\sigma_3)$ gives a bijection between $\Xi(C_0,C_0,N_+)$ and $\Xi(C_0,C_0,N_-)$, we have $\phi(C_{0,0,2})=\phi(\Xi(C_0,C_0,N_+))$.
By using $(\nu_3,\mu_3)$, we see that $\phi(C'_{0,0,2})=\phi(\Xi(C_0,C_0,N_+))$.
\end{proof}

Suppose that $$\xi=\left(
\begin{array}{ccc}
1 &x_1 &0  \\
0 &1 &2  \\
y_2 &1 &1 
\end{array}
\right)\in C'_{0,0,2}.$$
From the nonsingularity condition, we have $2 x_1 y_2 =1\pm 1$.
\begin{itemize}
 \item If $x_1 y_2 =0$, then $\xi \in \Xi(P_+,P_+,N_+) \cup \Xi(P_-,P_-,N_+)$.
 \item If $x_1 y_2 =1$, then $x_1=\pm 1$ and
 $$\xi=\left(
\begin{array}{ccc}
1 &x_1 &0  \\
0 &1 &2  \\
x_1 &1 &1 
\end{array}
\right).$$
\end{itemize}
Similarly, if we assume $$\xi=\left(
\begin{array}{ccc}
1 &0 &x_2  \\
y_1 &1 &2  \\
0 &1 &1 
\end{array}
\right)\in C'_{0,0,2},$$
then we see that $\xi \in \Xi(P_+,P_+,N_+) \cup \Xi(P_-,P_-,N_+)$ or
$$\xi=\left(
\begin{array}{ccc}
1 &0 &2a  \\
a &1 &2  \\
0 &1 &1 
\end{array}
\right),\left(
\begin{array}{ccc}
1 &0 &b  \\
2b &1 &2  \\
0 &1 &1 
\end{array}
\right)$$
where $a$ and $b$ are $\pm 1$.
By using the action of $(\nu_1,\mu_1)$, we obtain the following.

\begin{lem}\label{lem:C_{0,0,2}}
Put $A_2:=C'_{0,0,2}\cap \Xi(P_+,P_+,N_+)$, $A_3:=C'_{0,0,2}\cap \Xi(P_-,P_-,N_+)$,
$$\chi_1:=\left(
\begin{array}{ccc}
1 &0 &2  \\
1 &1 &2  \\
0 &1 &1 
\end{array}
\right),\,\chi_2:=\left(
\begin{array}{ccc}
1 &0 &1  \\
2 &1 &2  \\
0 &1 &1 
\end{array}
\right),\,\chi_3:=\left(
\begin{array}{ccc}
1 &1 &0  \\
0 &1 &2  \\
1 &1 &1 
\end{array}
\right).$$
Then we have $\phi(C_{0,0,2})=\phi(A_2\cup A_3\cup \{\chi_1,\chi_2,\chi_3\})$.
\end{lem}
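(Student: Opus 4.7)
The plan is to enumerate $C'_{0,0,2}$ case by case according to where each pair $(x_1,y_1)$ and $(x_2,y_2)$ lies in $C_0=P_+\cup P_-$, and then to use the nonsingularity condition together with the action of $(\nu_1,\mu_1)$ to reduce every element to one of the listed forms. Since the preceding lemma gives $\phi(C_{0,0,2})=\phi(C'_{0,0,2})$, it suffices to work with $C'_{0,0,2}$, for which the data $(x_3,y_3)=(2,1)$ is fixed.

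There are four subcases, indexed by the choice of $P_+$ or $P_-$ for each of $(x_1,y_1)$ and $(x_2,y_2)$. In the two diagonal subcases $(P_+,P_+)$ and $(P_-,P_-)$, the resulting $\xi$ lies in $\Xi(P_+,P_+,N_+)\subseteq A_2$ or $\Xi(P_-,P_-,N_+)\subseteq A_3$ by definition, so nothing further is needed. In the mixed subcase $(P_+,P_-)$, corresponding to $y_1=x_2=0$, expanding the determinant of $\xi$ gives $\det\xi=-1+2x_1y_2$, and the nonsingularity condition forces $x_1y_2\in\{0,1\}$; the case $x_1y_2=0$ already lands in $A_2\cup A_3$, while $x_1y_2=1$ yields $(x_1,y_2)=(1,1)$, which is $\chi_3$, or $(x_1,y_2)=(-1,-1)$, which is carried to $\chi_3$ by the action of $(\nu_1,\mu_1)$. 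The remaining subcase $(P_-,P_+)$, with $x_1=y_2=0$, is treated analogously: $\det\xi=-1+x_2y_1$ and the nontrivial condition $x_2y_1=2$ gives $(x_2,y_1)\in\{(2,1),(1,2),(-2,-1),(-1,-2)\}$, yielding $\chi_1$, $\chi_2$, and two further matrices that are carried to $\chi_1$ and $\chi_2$, respectively, by $(\nu_1,\mu_1)$.

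The main bookkeeping step is to verify the effect of $(\nu_1,\mu_1)$ on a characteristic square on $I^3$. A direct computation from $(\psi,\rho)\cdot\lambda=\psi\cdot\lambda\cdot\iota(\rho)$, using $\nu_1=\mathrm{diag}(-1,1,1)$ and $\iota(\mu_1)=\mathrm{diag}(-1,1,1,-1,1,1)$, shows that the net effect on $\xi$ is to negate exactly the off-diagonal entries of the first row and the first column while fixing the diagonal. Applying this rule to each of the four exceptional matrices listed above produces one of $\chi_1,\chi_2,\chi_3$, and then Proposition~\ref{realization of rep} yields the required weakly equivariant homeomorphisms, completing the proof.
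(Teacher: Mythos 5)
Your proposal is correct and follows essentially the same route as the paper: reduce to $C'_{0,0,2}$, split according to whether $(x_1,y_1)$ and $(x_2,y_2)$ lie in $P_+$ or $P_-$, apply the determinant condition $\det\xi=\pm1$ in the two mixed cases, and use $(\nu_1,\mu_1)$ (whose effect on $\xi$ you verify correctly) to normalize the signs down to $\chi_1,\chi_2,\chi_3$. The paper presents only the two mixed cases explicitly, but the content and the case analysis are the same as yours.
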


For $C_{0,2,2}$, we have the following.

\begin{lem}\label{lem:C_{0,2,2}}
Put
\begin{align*}
\chi_4 &:=\left(
\begin{array}{ccc}
1 &1 &1  \\
0 &1 &1  \\
2 &2 &1 
\end{array}
\right),
& \chi_5 &:=\left(
\begin{array}{ccc}
1 &2 &1  \\
0 &1 &1  \\
2 &2 &1 
\end{array}
\right),
& \chi_6 &:=\left(
\begin{array}{ccc}
1 &1 &1  \\
0 &1 &2  \\
2 &1 &1 
\end{array}
\right),
& \chi_7 &:=\left(
\begin{array}{ccc}
1 &2 &2  \\
0 &1 &1  \\
1 &2 &1 
\end{array}
\right), \\
\chi_8 &:=\left(
\begin{array}{ccc}
1 &4 &2  \\
0 &1 &1  \\
1 &2 &1 
\end{array}
\right),
& \chi_9 &:=\left(
\begin{array}{ccc}
1 &1 &2  \\
0 &1 &2  \\
1 &1 &1 
\end{array}
\right),
& \chi_{10} &:=\left(
\begin{array}{ccc}
1 &2 &2  \\
0 &1 &2  \\
1 &1 &1 
\end{array}
\right).
\end{align*}
Then we have $\phi(C_{0,2,2})=\phi(\{\chi_4,\ldots,\chi_{10}\})$.
\end{lem}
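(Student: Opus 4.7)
The plan is to parametrize $C_{0,2,2}$ explicitly and reduce via the sign- and swap-symmetries already introduced in the notation to a finite list of sub-cases, then solve the remaining nonsingularity condition $\det\xi=\pm 1$ in each. This mirrors the pattern used in the proofs of Lemmas \ref{lem:C_{0,0,0}} and \ref{lem:C_{0,0,2}}.

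First I would compute the effect of $(\nu_i,\mu_i)$ on $\xi$: since this action negates row $i$ and column $i$ of $\xi$, it simultaneously negates exactly the two of the three pairs $(x_j,y_j)$ that involve the index $i$ and fixes the third. The resulting group of sign-change actions is large enough to bring $(x_2,y_2)$ and $(x_3,y_3)$ into the ``positive halves'' $\{(2,1),(1,2)\}$ of $C_2$, at the cost of possibly negating $(x_1,y_1)\in C_0$; since $C_0$ is stable under negation, this is harmless. A further application of $(\delta_1,\sigma_1)$, which swaps the 2nd and 3rd pairs while sending $P_+\leftrightarrow P_-$ in the first, identifies the two ``mixed'' sign choices of $(x_2,y_2)$ versus $(x_3,y_3)$. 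This leaves three sub-cases: (i) $(x_2,y_2)=(x_3,y_3)=(2,1)$, (ii) $(x_2,y_2)=(x_3,y_3)=(1,2)$, and (iii) one of each.

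In each sub-case, I would expand $\det\xi$ as a linear polynomial in the single free integer parameter of $(x_1,y_1)$, impose $\det\xi=\pm 1$, and enumerate the finitely many integer solutions with $(x_1,y_1)\in C_0$. When $(x_1,y_1)\in P_-$, a final application of $(\delta_1,\sigma_1)$ transports the solution to an $(x_1,y_1)\in P_+$ one. I expect the seven resulting representatives to be precisely $\chi_4,\ldots,\chi_{10}$: namely $\chi_9,\chi_{10}$ from sub-case (i), $\chi_4,\chi_5$ from (ii), and $\chi_6,\chi_7,\chi_8$ from (iii).

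The main obstacle will be bookkeeping: matching each concrete parameter specialization to the correct $\chi_i$ and verifying that no sign- or swap-orbit in $C_{0,2,2}$ is accidentally omitted. Once the actions of $(\nu_i,\mu_i)$ and $(\delta_1,\sigma_1)$ on $\xi$ are computed, the argument reduces to solving three one-variable Diophantine equations, so no new technique beyond those used in the preceding lemmas should be required.
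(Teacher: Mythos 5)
Your proposal is correct and follows essentially the same route as the paper: normalize $\xi\in C_{0,2,2}$ using the sign actions $(\nu_i,\mu_i)$ and the swap $(\delta_1,\sigma_1)$, then solve $\det\xi=\pm1$ (the only remaining nonsingularity constraint) in each normal form. The paper organizes this as four forms with $(x_1,y_1)$ normalized into $P_+$ rather than your three sub-cases with $(x_1,y_1)$ ranging over all of $C_0$, but the two bookkeepings coincide — in particular your sub-case (iii) correctly yields $\chi_6$ from the $P_+$ branch and $\chi_7,\chi_8$ from the $P_-$ branch after transport by $(\delta_1,\sigma_1)$.
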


\begin{proof}
Take $\xi \in C_{0,2,2}$.
By using $(\delta_1,\sigma_1)$, $(\nu_2,\mu_2)$ and $(\nu_3,\mu_3)$, we can assume that $\xi$ is in one of the following forms:
$$ \xi=\left(
\begin{array}{ccc}
1 &x &1  \\
0 &1 &1  \\
2 &2 &1 
\end{array}
\right),\ \left(
\begin{array}{ccc}
1 &x &1  \\
0 &1 &2  \\
2 &1 &1 
\end{array}
\right),\ \left(
\begin{array}{ccc}
1 &x &2  \\
0 &1 &1 \\
1 &2 &1 
\end{array}
\right),\ \left(
\begin{array}{ccc}
1 &x &2  \\
0 &1 &2 \\
1 &1 &1 
\end{array}
\right).$$
If $\xi$ is in the first form, then we have $x=1,2$ by the nonsingularity condition.
Similarly, $x=1$ if $\xi$ is in the second form, $x=2,4$ in the third form, and $x=1,2$ in the forth form.
Thus we obtain the lemma.
\end{proof}

Similarly, we have the following lemma for $C_{2,2,2}$.

\begin{lem}\label{lem:C_{2,2,2}}
Put $$ \chi_{11}:=\left(
\begin{array}{ccc}
1 &2 &2  \\
1 &1 &2 \\
1 &1 &1 
\end{array}
\right).$$
Then we have $\phi(C_{2,2,2})=\{ \phi(\chi_{11}) \}$.
\end{lem}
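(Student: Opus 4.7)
The plan is to mimic the strategy used for $C_{0,2,2}$ in Lemma~\ref{lem:C_{0,2,2}}: apply the moves $(\delta_i,\sigma_i)$ and $(\nu_i,\mu_i)$ to reduce any $\xi\in C_{2,2,2}$ to a single standard form, invoking the nonsingularity condition to eliminate stray cases. Set $a:={}^t(2,1)$ and $b:={}^t(1,2)$, so that $N_+=\{\pm a\}$ and $N_-=\{\pm b\}$, and describe any $\xi\in C_{2,2,2}$ by the triple of pairs $P_i:={}^t(x_i,y_i)\in N_+\cup N_-$ ($i=1,2,3$). A direct matrix computation shows that $(\nu_j,\mu_j)$ fixes $P_j$ and negates the other two $P_k,P_l$, while $(\delta_j,\sigma_j)$ swaps $P_k$ with $P_l$ (where $\{j,k,l\}=\{1,2,3\}$) and replaces $P_j$ by its component-swap, which interchanges $N_+\leftrightarrow N_-$.

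First I would reduce to the case where every $P_i$ lies in $N_+$. Letting $k$ be the number of indices with $P_i\in N_-$, I pick any $j$ with $P_j\in N_-$ and apply $(\delta_j,\sigma_j)$; this flips $P_j$ into $N_+$ and merely permutes the other two pairs (preserving their $N_\pm$-type), so $k$ strictly decreases. Iterating brings $k$ down to $0$, after which each pair has the form $P_i=\epsilon_i\cdot a$ for some $\epsilon_i\in\{\pm 1\}$.

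Next I would use the nonsingularity condition at the vertex $F_4\cap F_5\cap F_6$. Expanding the determinant gives
\[
\det\xi = 1-(x_1 y_1+x_2 y_2+x_3 y_3)+x_1 x_3 y_2+x_2 y_1 y_3,
\]
and substituting $P_i=\epsilon_i\cdot a$ (so each $x_i y_i=2$) yields $\det\xi=-5+6\,\epsilon_1\epsilon_2\epsilon_3$. Since the nonsingularity condition requires $\det\xi=\pm 1$, this forces $\epsilon_1\epsilon_2\epsilon_3=1$ (the alternative would give $\det\xi=-11$).

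Finally, the subgroup generated by $(\nu_1,\mu_1),(\nu_2,\mu_2),(\nu_3,\mu_3)$ preserves the property that every $P_i$ lies in $N_+$ and acts on the sign triples $(\epsilon_1,\epsilon_2,\epsilon_3)$ by coordinatewise multiplication by triples of product $+1$. The orbit of $(+,+,+)$ under this $(\bbZ/2)^2$-action is exactly the set of $(\epsilon_1,\epsilon_2,\epsilon_3)$ with $\epsilon_1\epsilon_2\epsilon_3=1$. Hence any $\xi\in C_{2,2,2}$ can be moved to the one with $\epsilon_1=\epsilon_2=\epsilon_3=+1$, which is $\chi_{11}$, giving $\phi(C_{2,2,2})=\{\phi(\chi_{11})\}$ by Proposition~\ref{realization of rep}. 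I expect the main obstacle to be purely bookkeeping: verifying carefully that the action formulas for $(\delta_i,\sigma_i)$ and $(\nu_i,\mu_i)$ on the pairs $P_i$ are as claimed, and keeping the sign conventions consistent in the determinant calculation so that the nonsingularity condition really singles out $\epsilon_1\epsilon_2\epsilon_3=1$.
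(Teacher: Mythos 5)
Your argument is correct and follows essentially the same route as the paper: normalize using the $(\delta_j,\sigma_j)$ and $(\nu_j,\mu_j)$ moves and then invoke the determinant condition $\det\xi=\pm 1$ coming from the vertex $F_4\cap F_5\cap F_6$ (the paper normalizes only the first pair, solves $2y_2x_3+x_2y_3=5\pm1$ to get three matrices, and then identifies them by further moves). The only slip is an indexing one: $(\nu_j,\mu_j)$ negates the two pairs whose entries lie in row or column $j$ of $\xi$ (e.g.\ $(\nu_1,\mu_1)$ negates $P_1$ and $P_2$ and fixes $P_3$), not the two pairs $P_k$ with $k\neq j$ --- but since the three moves still realize exactly the even sign changes on $(\epsilon_1,\epsilon_2,\epsilon_3)$, your orbit computation and conclusion are unaffected.
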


\begin{proof}
Take $$ \xi=\left(
\begin{array}{ccc}
1 &x_1 &x_2  \\
y_1 &1 &x_3 \\
y_2 &y_3 &1 
\end{array}
\right)\in C_{2,2,2}. $$
By using $(\delta_1,\sigma_1)$, $(\nu_2,\mu_2)$ and $(\nu_3,\mu_3)$,  we can assume $x_1=2$, $y_1=1$ and $x_2,y_2>0$.
From the nonsingularity condition, we have $2 y_2 x_3 + x_2 y_3 = 5\pm 1$.
Then it is straightforward to obtain $$ \xi=\left(
\begin{array}{ccc}
1 &2 &2  \\
1 &1 &2 \\
1 &1 &1 
\end{array}
\right),\left(
\begin{array}{ccc}
1 &2 &2  \\
1 &1 &1 \\
1 &2 &1 
\end{array}
\right),\left(
\begin{array}{ccc}
1 &2 &1  \\
1 &1 &1 \\
2 &2 &1 
\end{array}
\right). $$
Moreover, if we put them $\xi_1$, $\xi_2$ and $\xi_3$, then we have $(\delta_3,\sigma_3)\cdot\xi_1=\xi_2$ and $(\delta_2,\sigma_2)\cdot\xi_2=\xi_3$.
\end{proof}

Taking $\tau_i$ ($i=1,2,3$) into account, we have the following diagram.
\[
\xymatrix {
\chi_1 \ar[1,0]_{\tau_3} \ar[1,1]^{\tau_2} \ar[0,1]^{\tau_1}
& \gamma_1 \ar[1,1]^{\tau_3} \ar[0,1]^{\tau_2}
& \gamma_2 \ar[0,1]^{\sigma_1}
& \chi_3 
& \chi_4 \ar[1,0]^{\tau_1}
& \chi_6 \ar[1,0]^{\tau_3} \\
\gamma_3 \ar[1,0]_{\sigma_3}
&\gamma_4 \ar[1,0]_{\tau_3} \ar[1,1]^{\sigma_1}
& \gamma_5 \ar[0,1]^{\sigma_3 \circ \sigma_2}
& \chi_2
& \gamma_6 \ar[1,0]^{\sigma_3 \circ \sigma_1}
& \gamma_7 \ar[1,0]^{\sigma_1} \\
\chi_7
& \chi_{11}
& \chi_9
&
& \chi_3
& \chi_8
}
\]
Here the arrow $\xi_1\xrightarrow{\rho}\xi_2$ means that there exist $\psi\in GL_3(\bbZ)$ and $\mu\in(\bbZ/2)^6$ such that $(\psi,\mu\circ\rho)\cdot(E_3\ \xi_1)=(E_3\ \xi_2)$, and $\gamma_i$ ($i=1,\dots,7$) denote the following characteristic squares:
\begin{align*}
\gamma_1 &:=\left(
\begin{array}{ccc}
1 &0 &2  \\
-1 &1 &0  \\
1 &1 &1 
\end{array}
\right),
& \gamma_2 &:=\left(
\begin{array}{ccc}
1 &0 &2  \\
1 &1 &0  \\
1 &1 &1 
\end{array}
\right),
& \gamma_3 &:=\left(
\begin{array}{ccc}
1 &2 &2  \\
1 &1 &2  \\
0 &1 &1 
\end{array}
\right),
& \gamma_4 &:=\left(
\begin{array}{ccc}
1 &0 &2  \\
1 &1 &2  \\
1 &1 &1 
\end{array}
\right), \\
\gamma_5 &:=\left(
\begin{array}{ccc}
1 &2 &2  \\
1 &1 &0  \\
0 &1 &1 
\end{array}
\right),
& \gamma_6 &:=\left(
\begin{array}{ccc}
1 &1 &1  \\
0 &1 &1  \\
2 &0 &1 
\end{array}
\right),
& \gamma_7 &:=\left(
\begin{array}{ccc}
1 &0 &1  \\
4 &1 &2  \\
2 &1 &1 
\end{array}
\right).
\end{align*}

Summarizing Lemma \ref{lem:first restriction}, Lemma \ref{lem:C_{0,0,0}}, Lemma \ref{lem:C_{0,0,2}}, Lemma \ref{lem:C_{0,2,2}}, Lemma \ref{lem:C_{2,2,2}}
and the above, we obtain the following.
Note that $\chi_3$ appears twice in the diagram and $\chi_5$, $\chi_{10}$ do not appear.

\begin{lem}\label{lem:summary up to w.e.homeo.}
$\calM^{\rm weh}_{I^3}= \phi(A_1\cup A_2\cup A_3\cup \{ \chi_1,\chi_5,\chi_6,\chi_{10} \})$.
\end{lem}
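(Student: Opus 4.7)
The plan is to combine the reduction lemmas proved earlier in this section together with the equivalence diagram displayed just above the statement. First, Lemma \ref{lem:first restriction} reduces the computation of $\calM^{\rm weh}_{I^3}$ to the image under $\phi$ of the four strata $C_{0,0,0}$, $C_{0,0,2}$, $C_{0,2,2}$, $C_{2,2,2}$. Applying Lemmas \ref{lem:C_{0,0,0}}, \ref{lem:C_{0,0,2}}, \ref{lem:C_{0,2,2}}, and \ref{lem:C_{2,2,2}} in turn, I obtain the initial inclusion
\[
\calM^{\rm weh}_{I^3}\subseteq \phi\bigl(A_1\cup A_2\cup A_3\cup\{\chi_1,\chi_2,\ldots,\chi_{11}\}\bigr).
\]
The remaining task is to eliminate $\chi_2,\chi_3,\chi_4,\chi_7,\chi_8,\chi_9,\chi_{11}$ from this set.

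This elimination is exactly the content of the displayed diagram: each arrow $\xi_1\xrightarrow{\rho}\xi_2$ records an element $(\psi,\mu\circ\rho)\in GL_3(\bbZ)\times R(I^3)$ such that $(\psi,\mu\circ\rho)\cdot(E_3\,\xi_1)=(E_3\,\xi_2)$, so by Corollary \ref{classification up to wehomeo} we have $\phi(\xi_1)=\phi(\xi_2)$. Following the three connected components of the diagram, I would conclude:
\begin{itemize}
\item the component containing $\chi_1$ gives $\phi(\chi_1)=\phi(\chi_2)=\phi(\chi_3)=\phi(\chi_7)=\phi(\chi_9)=\phi(\chi_{11})$ (via the $\gamma_1,\gamma_2,\gamma_3,\gamma_4,\gamma_5$ chain);
\item the component containing $\chi_4$ gives $\phi(\chi_4)=\phi(\chi_3)$ (via $\gamma_6$), which collapses into the $\chi_1$-class;
\item the component containing $\chi_6$ gives $\phi(\chi_6)=\phi(\chi_8)$ (via $\gamma_7$).
\end{itemize}
Since $\chi_5$ and $\chi_{10}$ do not appear in the diagram, no further identifications are available for them, and they must be kept. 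Putting everything together yields $\calM^{\rm weh}_{I^3}\subseteq \phi(A_1\cup A_2\cup A_3\cup\{\chi_1,\chi_5,\chi_6,\chi_{10}\})$, and the reverse inclusion is automatic.

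The main obstacle in carrying out this proof is the bookkeeping: one must verify each arrow in the diagram, which amounts to exhibiting the matching $\psi\in GL_3(\bbZ)$ and sign pattern $\mu\in(\bbZ/2)^6$ for the stated permutation $\rho$, and then checking that the resulting left $n\times n$ block is invertible so that we can conjugate back to the form $(E_3\,\xi')$. These are routine but tedious determinant and row-reduction computations, and the key point is just that the diagram is connected enough to collapse the list of $\chi_i$'s down to the claimed four.
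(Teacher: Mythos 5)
Your proposal is correct and is essentially the paper's own argument: the paper likewise obtains the lemma by chaining Lemmas \ref{lem:first restriction}, \ref{lem:C_{0,0,0}}, \ref{lem:C_{0,0,2}}, \ref{lem:C_{0,2,2}}, \ref{lem:C_{2,2,2}} and then collapsing $\chi_2,\chi_3,\chi_4,\chi_7,\chi_8,\chi_9,\chi_{11}$ via the connected components of the displayed diagram, exactly as you describe. Your reading of the diagram's components and the resulting identifications matches the paper's (unwritten) bookkeeping.
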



\begin{df}
We define $\xi_{s,t}\,(s,t\in\bbZ)$ by
$$\xi_{s,t}:=\left(
\begin{array}{ccc}
1 &s &t  \\
0 &1 &2  \\
0 &1 &1 
\end{array}
\right) \in A_2,$$
and $\xi^{s,t}\,(s,t\in\bbZ)$ by
$$\xi^{s,t}:=\left(
\begin{array}{ccc}
1 &0 &0  \\
s &1 &2  \\
t &1 &1 
\end{array}
\right) \in A_3.$$
Note that $A_2=\{\xi_{s,t}\}_{s,t\in\bbZ}$ and $A_3=\{\xi^{s,t}\}_{s,t\in\bbZ}$.
\end{df}

\section{Strong cohomological rigidity of $\calM_{I^3}$}


In this section, for $\xi\in\Xi_3$, we denote the generators of $H^*(\xi)$ by $X$, $Y$ and $Z$ instead of $X_1$, $X_2$ and $X_3$ (see Definition \ref{H^*(xi)}).
Additionally, we define $H^*(\xi;\bbZ/2):=H^*(\xi)/2$, $w_2(\xi):=\sum_{i=1}^6 u_i(\xi) \in H^2(\xi;\bbZ/2)$, $p_1(\xi):=-\sum_{i=1}^6 u_i(\xi)^2\in H^4(\xi)$ and identify $w_2(\xi)$, $p_1(\xi)$ with $w_2(M(\xi))$, $p_1(M(\xi))$ respectively through the canonical isomorphism between $H^*(\xi)$ and $H^*(M(\xi);\bbZ)$ (see Theorem \ref{thm:characteristic classes of a quasitoric manifold}).

\begin{df}
Let $\calM_{I^3}$ be the set of homeomorphism classes of quasitoric manifolds over $I^3$ and $\phi_1$ be the canonical surjection from $\calM^\mathrm{weh}_{I^3}$ to $\calM_{I^3}$.
We define subsets $\calM_1$, $\calM_2$ and $\calM_3$ of $\calM_{I^3}$ by 
$\calM_1:=\phi_1 \circ \phi (A_1)$, $\calM_2:=\phi_1 \circ \phi (A_2\setminus \{ \xi_{0,0} \})$ and $\calM_3:=\phi_1 \circ \phi (A_3)$.
In addition, we define $\calM^\mathrm{ceq}_{I^3}$ as the quotient $\calM_{I^3}/\!\sim$ where $M\sim M'$ if and only if $H^*(M;\bbZ)\cong H^*(M';\bbZ)$ as graded rings, and denote the quotient map by $\phi_2\co \calM_{I^3} \to \calM^\mathrm{ceq}_{I^3}$.
\end{df} 

\begin{df}
A class $\calC$ of topological spaces is called \textit{strongly cohomologically rigid} if for any graded ring isomorphism $\varphi$ between the cohomology rings of $X,Y\in\calC$ there exists a homeomorphism $f$ between them such that $\varphi=f^*$.
\end{df}

\begin{rem}\label{rem:strong cohomological rigidity of M_1}
By \cite[Proposition 6.2]{CMS10}, we see that $\calM_1$ corresponds with the class of 3-stage Bott manifolds.
Then we obtain the strong cohomological rigidity of $\calM_1$ by \cite[Theorem 3.1]{Choi} which shows the strong cohomological rigidity of 3-stage Bott manifolds.
\end{rem}

\begin{lem}\label{lem:M(chi_5),M(chi_6) in M_2}
$M(\chi_5),M(\chi_6),M(\chi_{10})\in\calM_2$.
Therefore $\calM_{I^3}=\calM_1\cup \calM_2\cup \calM_3\cup \{\chi_1\}$.
\end{lem}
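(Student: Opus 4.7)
The plan is to first dispose of the ``therefore'' part by a short bookkeeping step, then invoke Jupp's classification theorem three times. For the bookkeeping: from Lemma \ref{lem:summary up to w.e.homeo.} we have $\calM_{I^3}^{\rm weh}=\phi(A_1\cup A_2\cup A_3\cup\{\chi_1,\chi_5,\chi_6,\chi_{10}\})$, so $\calM_{I^3}=\calM_1\cup\phi_1\circ\phi(A_2)\cup\calM_3\cup\{M(\chi_1),M(\chi_5),M(\chi_6),M(\chi_{10})\}$. Observing that $\xi_{0,0}=\xi^{0,0}\in A_3$ gives $M(\xi_{0,0})\in\calM_3$ and hence $\phi_1\circ\phi(A_2)\subseteq\calM_2\cup\calM_3$. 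Therefore, once $M(\chi_i)\in\calM_2$ is established for $i=5,6,10$, the stated equality $\calM_{I^3}=\calM_1\cup\calM_2\cup\calM_3\cup\{\chi_1\}$ follows.

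The substantive content is to show $M(\chi_i)\cong M(\xi_{s,t})$ as topological manifolds for some $(s,t)\neq(0,0)$, for each $i\in\{5,6,10\}$. The natural tool is Jupp's classification (Theorem \ref{thm:classification theorem of 6-manifolds}), which applies because every quasitoric manifold over $I^3$ is a closed, simply connected, smooth $6$-manifold with torsion-free integral cohomology---a consequence of the CW-decomposition without odd-dimensional cells. So it is enough to produce, for each $i$, a graded ring isomorphism $\varphi_i\co H^*(\chi_i;\bbZ)\to H^*(\xi_{s,t};\bbZ)$ which matches the second Stiefel--Whitney class and the first Pontrjagin class, both of which are computable via Theorem \ref{thm:characteristic classes of a quasitoric manifold}.

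To construct $\varphi_i$, I would search for a unimodular basis $(\tilde X,\tilde Y,\tilde Z)$ of $H^2(\chi_i;\bbZ)$ in which the two rigid relations arising from the lower-right $\kappa^2$-block of $\xi_{s,t}$ hold, namely $\tilde Y(\tilde Y+2\tilde Z)=0$ and $\tilde Z(\tilde Y+\tilde Z)=0$. Promising pairs $(\tilde Y,\tilde Z)$ come from inspecting two rows of $\chi_i$ whose sub-block is $\kappa^2$-like after a suitable sign change; once such a basis is found the third relation of $H^*(\chi_i;\bbZ)$, rewritten in the new basis, automatically reads $\tilde X(\tilde X+s\tilde Y+t\tilde Z)=0$ and exhibits the target $(s,t)$. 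Parity constraints help narrow the search: from Theorem \ref{thm:characteristic classes of a quasitoric manifold} one computes, for example, $w_2(\xi_{s,t})\equiv(s+1)Y+tZ\pmod 2$, so matching $w_2(\chi_5)=0$ already forces $s$ odd and $t$ even; the integer class $p_1$ pins down the remaining degrees of freedom in each case.

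The hard part will be the purely computational task of finding the correct basis change for each of $\chi_5$, $\chi_6$, $\chi_{10}$ and verifying the resulting ring isomorphism together with the matching of $w_2$ and $p_1$; the arithmetic is heavy but involves no conceptual surprises, after which Jupp's theorem delivers the required homeomorphisms.
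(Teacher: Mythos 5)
Your proposal is correct and follows essentially the same route as the paper: the paper likewise exhibits explicit degree-$2$ basis changes (matrices $A_5$, $A_6$, $A_{10}$) inducing graded ring isomorphisms $H^*(\xi_{-1,-2})\to H^*(\chi_5)$, $H^*(\xi_{1,1})\to H^*(\chi_6)$, $H^*(\xi_{-2,-2})\to H^*(\chi_{10})$ that preserve $w_2$ and $p_1$, and then applies Jupp's theorem. All that remains on your end is the deferred computation, i.e.\ actually producing the three basis changes and checking the characteristic-class identities (your parity constraints are consistent with the targets above), and your bookkeeping for the ``therefore'' clause via $\xi_{0,0}=\xi^{0,0}\in A_3$ is exactly what the paper's definitions implicitly rely on.
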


\begin{proof}
Define graded ring automorphisms $\alpha_5,\alpha_6,\alpha_{10}$ of $\bbZ[X,Y,Z]$ so that
$$
\alpha_i\left(\begin{array}{c}X\\Y\\Z\end{array}\right)=
A_i\left(\begin{array}{c}X\\Y\\Z\end{array}\right)
$$
where $A_i$ ($i=5,6,10$) denote the following matrices:
$$A_5
:=\left(\begin{array}{ccc}
1 &0 &0  \\
0 &0 &1  \\
1 &1 &0 
\end{array}
\right),\,
A_6:=\left(
\begin{array}{ccc}
-1 &0 &0  \\
2 &1 &0  \\
0 &0 &1 
\end{array}
\right),\,
A_{10}:=\left(
\begin{array}{ccc}
1 &0 &0  \\
1 &1 &0  \\
0 &0 &1 
\end{array}
\right).
$$
Then these $\alpha_i$'s descend to isomorphisms $\alpha_5\co H^*(\xi_{-1,-2})\to H^*(\chi_5)$,
$\alpha_6\co H^*(\xi_{1,1})\to H^*(\chi_6)$, $\alpha_{10}\co H^*(\xi_{-2,-2})\to H^*(\chi_{10})$
and they preserve the second Stiefel-Whitney classes and the first Pontrjagin classes.
Thus we obtain the lemma by Theorem \ref{thm:classification theorem of 6-manifolds}.
\end{proof}

\begin{lem}
Let $\bbZ[Y,Z]$ be the polynomial ring generated by $Y$ and $Z$ of degree $2$, and $R$ be the quotient ring $\bbZ[Y,Z]/(Y(Y+2Z),Z(Y+Z))$.
Then $R$ has no non-zero element of degree $2$ of which the square is equal to $0$.
\end{lem}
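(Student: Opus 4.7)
The plan is to take an arbitrary degree-$2$ element $\alpha = aY+bZ$ with $a,b\in\bbZ$, compute $\alpha^2$ in $R$ using the defining relations, and then exploit positivity to force $a=b=0$.

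First I would use the two relations $Y^2=-2YZ$ and $Z^2=-YZ$ (which are immediate from $Y(Y+2Z)=0$ and $Z(Y+Z)=0$) to reduce the product. This gives
\[
\alpha^2 = a^2Y^2 + 2abYZ + b^2Z^2 = \bigl(-2a^2+2ab-b^2\bigr)\,YZ
\]
in $R$. So the vanishing of $\alpha^2$ reduces to two independent facts: that the coefficient $-2a^2+2ab-b^2$ must vanish in $\bbZ$, and that $YZ$ is nonzero in $R$.

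For the first fact I would rewrite
\[
2a^2 - 2ab + b^2 = a^2 + (a-b)^2,
\]
which is a sum of two integer squares. Its vanishing therefore forces $a=0$ and $a=b$, hence $a=b=0$. For the second fact I need $YZ\neq 0$ in $R$. The degree-$4$ part of the ideal $(Y(Y+2Z),Z(Y+Z))\subseteq\bbZ[Y,Z]$ is the $\bbZ$-submodule of $\bbZ Y^2\oplus\bbZ YZ\oplus\bbZ Z^2$ generated by $Y^2+2YZ$ and $YZ+Z^2$, which is a rank-$2$ sublattice. Hence $R_4\cong\bbZ$, and in the quotient one computes $Y^2\equiv -2YZ$, $Z^2\equiv -YZ$, so $YZ$ is in fact a generator of $R_4$; in particular $YZ\neq 0$.

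Combining the two observations, $\alpha^2=0$ in $R$ forces $(-2a^2+2ab-b^2)=0$, hence $a=b=0$, so $\alpha=0$. The only genuinely nontrivial step is verifying $YZ\neq 0$, but this is a short direct calculation at the level of the degree-$4$ component (equivalently, it just reflects the fact that $R$ is the cohomology ring of $\ccp{2}$, whose $H^4$ is $\bbZ$); there is no real obstacle.
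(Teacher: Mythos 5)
Your proposal is correct and follows essentially the same route as the paper: expand $(aY+bZ)^2$ using the relations to get $-\{a^2+(a-b)^2\}YZ$ and conclude from the sum-of-squares form. The only difference is that you explicitly verify $YZ\neq 0$ in $R_4$, a point the paper leaves implicit; this is a harmless and welcome addition.
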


\begin{proof}
Let $W=s Y+t Z$ be an element of which the square is $0$.
Then $0=W^2=(s Y+t Z)^2=(-2s^2+2s t-t^2)YZ=-\{s^2+(s-t)^2\}YZ$, so we have $s=s-t=0$, i.e. $W=0$.
\end{proof}

\begin{rem}\label{rem:nil-square element}
For any $\xi \in \Xi(\bbZ^2,\bbZ^2,C_2)$, since $H^*(\xi)/(X)$ is isomorphic to $R$,
the set $\{W\in H^2(\xi)\,|\,W^2=0\}$ is equal to $\bbZ X$ or $\{0\}$.
\end{rem}

This remark immediately yields the following lemma.

\begin{lem}\label{lem:M_1,M_3/M_2,M_4}
Let $M$ be a quasitoric manifold over $I^3$.
Then there exists non-zero $W \in H^2(M;\bbZ)$ such that $W^2=0$ if and only if $M\in \calM_1 \cup \calM_3$.
In particular, $\phi_2(\calM_1 \cup \calM_3) \cap \phi_2(\calM_2 \cup \{\chi_1\})=\emptyset$.
\end{lem}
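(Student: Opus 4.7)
The plan is to prove both implications by direct computation in the graded rings $H^*(\xi)$, using the explicit lists of representatives compiled in the previous section; the ``in particular'' clause will then follow formally.

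For the forward direction, I would argue as follows. Every $\xi\in A_1=P_{+,+,+}$ has $y_1=y_2=y_3=0$, so $u_3(\xi)=Z$ and the defining relation $u_3(\xi)\cdot Z=0$ reads $Z^2=0$; moreover $Z\neq 0$ since $X,Y,Z$ span the rank-$3$ group $H^2(\xi)$. Dually, every $\xi^{s,t}\in A_3$ has first row $(1,0,0)$, so $u_1(\xi^{s,t})=X$ and $u_1(\xi^{s,t})\cdot X=0$ gives $X^2=0$ with $X\neq 0$. Transporting through the cohomology isomorphism $H^*(M;\bbZ)\cong H^*(\xi)$ then provides the required nil-square element in $H^2(M;\bbZ)$ for any $M\in\calM_1\cup\calM_3$.

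For the converse, Lemma~\ref{lem:summary up to w.e.homeo.} together with Lemma~\ref{lem:M(chi_5),M(chi_6) in M_2} yields $\calM_{I^3}=\calM_1\cup\calM_2\cup\calM_3\cup\{[M(\chi_1)]\}$, so it suffices to treat $M\cong M(\xi)$ for $\xi\in(A_2\setminus\{\xi_{0,0}\})\cup\{\chi_1\}$. Each such $\xi$ lies in $\Xi(\bbZ^2,\bbZ^2,C_2)$, so by Remark~\ref{rem:nil-square element} any nil-square element of $H^2(\xi)$ is an integer multiple of $X$; thus it remains only to verify that $X^2\neq 0$. The first defining relation gives $X^2=-sXY-tXZ$ for $\xi=\xi_{s,t}$ and $X^2=-2XZ$ for $\xi=\chi_1$, so I would then show that $XY,XZ,YZ$ are $\bbZ$-linearly independent in $H^4(\xi)$. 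Since $H^*(\xi)$ is torsion-free and Poincar\'e duality gives $\mathrm{rank}\,H^4(\xi)=\mathrm{rank}\,H^2(\xi)=3$, while the three defining relations $u_i X_i=0$ rewrite each of $X^2, Y^2, Z^2$ as an integer combination of $XY, XZ, YZ$, the latter three classes span a rank-$3$ free submodule of a rank-$3$ free module, and therefore form a basis. It follows that $-sXY-tXZ=0$ only when $s=t=0$ and that $-2XZ\neq 0$.

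The ``in particular'' clause is then immediate: if $\phi_2(M)=\phi_2(M')$ then $H^*(M;\bbZ)\cong H^*(M';\bbZ)$ as graded rings, and the property ``$H^2$ contains a non-zero element squaring to zero'' is manifestly preserved under such isomorphisms. The only slightly delicate step is the basis argument in the last paragraph, but this is routine given that quasitoric manifolds admit a CW structure with cells only in even dimensions, from which both torsion-freeness and the Betti-number computation (equivalently, the $h$-vector $(1,3,3,1)$ of $I^3$) follow.
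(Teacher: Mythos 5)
Your proof is correct and follows essentially the same route as the paper, which derives the lemma from Remark \ref{rem:nil-square element}; you have simply made explicit the computations the paper leaves implicit ($Z^2=0$ on $A_1$, $X^2=0$ on $A_3$, and the verification via the basis $XY,XZ,YZ$ of $H^4$ that $X^2\neq 0$ for $\xi_{s,t}$ with $(s,t)\neq(0,0)$ and for $\chi_1$). No gaps.
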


\begin{lem}\label{lem:M_1/M_3}
$\phi_2(\calM_1)\cap\phi_2(\calM_3)=\emptyset$.
\end{lem}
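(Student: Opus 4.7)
The plan is to find a graded-ring invariant of $H^*(M;\bbZ)$ that separates $\calM_1$ from $\calM_3$. I will use the property $(*)$: \emph{there exists a primitive element $W \in H^2(M;\bbZ)$ with $W^2 = 0$ such that the quotient ring $H^*(M;\bbZ)/(W)$ contains a non-zero degree-$2$ element of square zero.} Primitivity of $W$ (i.e., $W$ being part of a $\bbZ$-basis of $H^2$), the square-zero condition, and the principal-ideal quotient are all intrinsic to the graded ring, so $(*)$ descends to an invariant of the cohomology ring; I will show it fails on every $M \in \calM_3$ and holds on every $M \in \calM_1$.

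For the $\calM_3$ side, I would take $M = M(\xi^{s,t})$ with presentation $H^*(\xi^{s,t}) = \bbZ[X,Y,Z]/(X^2,\,Y(sX+Y+2Z),\,Z(tX+Y+Z))$, so that $X^2 = 0$, $Y^2 = -sXY - 2YZ$, $Z^2 = -tXZ - YZ$. With $\{XY, XZ, YZ\}$ as a $\bbZ$-basis of $H^4(\xi^{s,t})$, the condition $(aX+bY+cZ)^2 = 0$ becomes the integer system
\[
2ab = sb^2,\qquad 2ac = tc^2,\qquad c^2 - 2bc + 2b^2 = 0.
\]
The third equation rewrites as $(c-b)^2 + b^2 = 0$, forcing $b = c = 0$. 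Hence every $W \in H^2(\xi^{s,t})$ with $W^2 = 0$ lies in $\bbZ \cdot X$, the primitive such elements are $\pm X$, and $H^*(\xi^{s,t})/(X) \cong R = \bbZ[Y,Z]/(Y(Y+2Z),\,Z(Y+Z))$. By the lemma preceding Remark \ref{rem:nil-square element}, $R$ has no non-zero degree-$2$ element of square zero, so $(*)$ fails on every member of $\calM_3$.

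For the $\calM_1$ side, any $\xi \in A_1 = P_{+,+,+}$ is upper triangular, and the relation $u_3(\xi)X_3 = 0$ reads $Z^2 = 0$. Thus $W := Z$ is a standard $\bbZ$-basis vector of $H^2(M;\bbZ) \cong \bbZ^3$, so it is primitive and satisfies $W^2 = 0$. Setting $Z = 0$ in the remaining two relations gives $H^*(\xi)/(Z) = \bbZ[X,Y]/(X(X+x_1 Y),\,Y^2)$, in which the element $Y$ is non-zero and satisfies $Y^2 = 0$. Hence $(*)$ holds on every member of $\calM_1$.

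The main obstacle is the integer-point analysis on the $\calM_3$ side, namely that $c^2 - 2bc + 2b^2 = 0$ admits only $b = c = 0$ over $\bbZ$; the completion of squares $(c-b)^2 + b^2 = 0$ settles it and in fact parallels exactly the argument given for the ring $R$ in the lemma preceding Remark \ref{rem:nil-square element}. With both sides established, $(*)$ distinguishes $\calM_1$ from $\calM_3$, proving $\phi_2(\calM_1) \cap \phi_2(\calM_3) = \emptyset$.
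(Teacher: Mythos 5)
Your proof is correct and follows essentially the same route as the paper: the paper likewise observes that an isomorphism would have to carry $\bbZ Z\subseteq H^*(\xi_1)$ onto $\bbZ X\subseteq H^*(\xi_3)$ (the unique line of square-zero classes, via Remark \ref{rem:nil-square element}) and then compares the quotients $H^*(\xi_1)/(Z)$ and $H^*(\xi_3)/(X)\cong R$ by the presence or absence of a non-zero square-zero degree-$2$ element. Your packaging of this as the intrinsic invariant $(*)$, together with the explicit verification that the square-zero classes in $H^*(\xi^{s,t})$ are exactly $\bbZ X$, is just a slightly more self-contained version of the same argument.
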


\begin{proof}
Let $\xi_1 \in A_1,\,\xi_3 \in A_3$ and suppose that there exists an isomorphism $\alpha \co H^*(\xi_1)\to H^*(\xi_3)$.
Since $\alpha$ preserves the elements of which the squares are 0, $\alpha$ descends to an isomorphism
$\bar{\alpha}\co H^*(\xi_1)/(Z) \to H^*(\xi_3)/(X)$.
However, $H^*(\xi_1)/(Z)$ has non-zero degree 2 elements of which the squares are zero, but $H^*(\xi_3)/(X)\cong R$ does not.
This is a contradiction.
\end{proof}

\begin{lem}\label{lem:M_2/chi_1}
Let $\xi_0$ be a characteristic square on $I^{n-1}$ ($n\geq 3$), $\xi$ be a $(1,\xi_0)$-type characteristic square on $I^n$, and $\varphi\co \bbZ[X,Y,Z]\to \bbZ[X_1,\ldots,X_n]$ be a graded ring monomorphism which maps $X$, $Y$ and $Z$ to $\sum_{i=1}^n a_i X_i$, $\sum_{i=1}^n b_i X_i$ and $\sum_{i=1}^n c_i X_i$ respectively.
Moreover, we assume the following:
\begin{enumerate}
 \item[(a)] $\varphi$ maps $\calI_{\chi_1}$ into $\calI_{\xi}$;
 \item[(b)] 
for each prime $p$ the $\bmod p$ reductions of $(a_1,\ldots,a_n)$, $(b_1,\ldots,b_n)$ and $(c_1,\ldots,c_n)$ are linearly independent.
\end{enumerate}
Then we have $a_1=b_1=c_1=0$.
In particular, there exists no graded ring isomorphism from $H^*(\chi_1)$ to $H^*(\xi_{s,t})$ for any integers $s$ and $t$.
\end{lem}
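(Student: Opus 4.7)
The plan is to extract coefficient equations from the hypothesis $\varphi(\calI_{\chi_1})\subseteq\calI_\xi$ by exploiting the $(1,\xi_0)$-type structure of $\xi$, and then to case-analyse on whether $a_1,b_1,c_1$ vanish. First I would write out the generators of $\calI_{\chi_1}$ from the rows of $\chi_1$, namely $X(X+2Z)$, $Y(X+Y+2Z)$, and $Z(Y+Z)$. The image of each under $\varphi$ lies in $\calI_\xi$ by (a) and, being of degree $4$, must equal an integer combination $\sum_i\lambda_i u_i(\xi)X_i$. The key observation uses the $(1,\xi_0)$-type hypothesis: since $\xi_{i,1}=0$ for every $i\geq 2$, the generator $u_i(\xi)X_i$ contains no $X_1X_j$ monomial, so the $X_1X_j$-coefficient on the right is just $\lambda_1 s_j$ with $s_j=\xi_{1,j}$, and $\lambda_1$ equals the $X_1^2$-coefficient on the left. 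Writing $\varphi(X)=\sum a_iX_i$, $\varphi(Y)=\sum b_iX_i$, $\varphi(Z)=\sum c_iX_i$ and comparing coefficients, for each $j\geq 2$ I would obtain
\begin{align*}
2(a_1a_j+a_1c_j+a_jc_1)&=a_1(a_1+2c_1)s_j, \tag{I}\\
a_1b_j+a_jb_1+2b_1b_j+2b_1c_j+2b_jc_1&=b_1(a_1+b_1+2c_1)s_j, \tag{II}\\
b_1c_j+b_jc_1+2c_1c_j&=c_1(b_1+c_1)s_j. \tag{III}
\end{align*}

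Next I would case-split on which of $a_1,b_1,c_1$ vanish. When at least two of them are zero, equations (I)--(III) quickly force one of $a$, $b$, or $c$ to be zero in $\bbZ^n$, contradicting that $\varphi$ is a monomorphism. When exactly one vanishes---say $b_1=0$---equation (II) becomes $(a_1+2c_1)B=0$, so either $b=0$ (contradiction) or $a_1=-2c_1$, and then (I) collapses to $A+2C=0$, producing the integral relation $a+2c=0$ in $\bbZ^n$; its coefficient vector $(1,0,2)$ is coprime, so the relation descends to a non-trivial one modulo every prime, contradicting hypothesis (b). The other ``exactly one vanishes'' sub-cases are symmetric.

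The main case is when $a_1,b_1,c_1$ are all nonzero. Here I would form the combination $-c_1(b_1+c_1)\cdot(\text{I})+a_1(a_1+2c_1)\cdot(\text{III})$: the $S$-term cancels identically, yielding a linear relation $\alpha A+\beta B+\gamma C=0$ with $\alpha=-2c_1(b_1+c_1)(a_1+c_1)$, $\beta=a_1c_1(a_1+2c_1)$, $\gamma=a_1\bigl[a_1(b_1+2c_1)+2c_1^2\bigr]$. A direct expansion shows that the same combination of first coordinates, $\alpha a_1+\beta b_1+\gamma c_1$, also vanishes, so $\alpha a+\beta b+\gamma c=0$ as a relation in $\bbZ^n$. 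Provided $(\alpha,\beta,\gamma)\neq(0,0,0)$, dividing by the gcd gives a non-trivial $\bbZ$-linear dependence with coprime coefficients, contradicting (b) at every prime. The main obstacle is the degenerate sub-case $\alpha=\beta=\gamma=0$, which forces $a_1=-2c_1$ and $b_1=-c_1$ simultaneously; I would resolve it by substituting $a_1=-2c_1$ directly into (I), which again collapses to $A=-2C$, hence $a=-2c$, reducing to the same contradiction.

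Finally, the ``in particular'' assertion follows by applying the first part to $\xi=\xi_{s,t}$, which is $(1,\kappa^2)$-type and so fits the hypothesis with $n=3$: a graded ring isomorphism $\varphi\co H^*(\chi_1)\to H^*(\xi_{s,t})$ lifts to a polynomial map whose $3\times 3$ matrix $(a,b,c)$ is invertible over $\bbZ$, hence linearly independent modulo every prime; but then $a_1=b_1=c_1=0$ would force the first row to vanish, contradicting invertibility.
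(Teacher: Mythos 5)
Your proposal is correct, and the coefficient equations (I)--(III) you extract are exactly the ones the paper works with (the paper's $a_1(a_i+2c_i)=(s_ia_1-a_i)(a_1+2c_1)$ is your (I) rearranged). Where you diverge is in how hypothesis (b) is brought to bear. The paper treats each of the three relations separately with a gcd argument: writing $k=\gcd(a_1,a_1+2c_1)$, coprimality of $a_1/k$ and $(a_1+2c_1)/k$ forces $a_1/k\mid s_ia_1-a_i$ for all $i$, whence any prime factor of $a_1/k$ would divide every $a_i$, contradicting (b); this yields $a_1=\pm(a_1+2c_1)$ and analogous two-way disjunctions from the other two relations, which are then solved simultaneously. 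You instead eliminate the unknown $s_j$ by taking the combination $-c_1(b_1+c_1)\cdot(\mathrm{I})+a_1(a_1+2c_1)\cdot(\mathrm{III})$, obtaining a single integral relation $\alpha a+\beta b+\gamma c=0$ (I checked that $\alpha a_1+\beta b_1+\gamma c_1=0$ and that the degenerate case $\alpha=\beta=\gamma=0$ forces $a_1=-2c_1$, $b_1=-c_1$, which you correctly resolve via (I)); dividing by the gcd then contradicts (b) at every prime at once. Your route trades the paper's divisibility analysis for a resultant-style elimination plus an up-front case split on the vanishing of $a_1,b_1,c_1$; both are sound, and yours arguably isolates more cleanly where (b) is used. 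Two small wording issues: ``when at least two of them are zero'' should read ``exactly two,'' since the all-three-zero case is the desired conclusion rather than a contradiction; and in the final paragraph the vanishing of $a_1,b_1,c_1$ kills the first \emph{row} only under your column convention for the matrix $(a,b,c)$ -- worth stating explicitly, but neither point affects the argument.
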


\begin{proof}
Denote the first row of $\xi$ by $(1,s_2,s_3,\ldots,s_n)$.
Since $\xi$ is $(1,\xi_0)$-type and
\begin{align*}
& \varphi(X(X+2 Z)) = \left(\sum_{i=1}^n a_i X_i\right)\left\{ \sum_{i=1}^n (a_i+2 c_i) X_i \right\} \\
& = X_1 \left\{ a_1(a_1+2 c_1) X_1 + \sum_{i=2}^n \{a_1(a_i+2 c_i)+a_i(a_1+2 c_1)\} X_i \right\}+(\text{a polynomial in }X_2,\ldots,X_n) \\
& = X_1 \sum_{i=2}^n \{ a_1(a_i+2 c_i)+(a_i-s_i a_1)(a_1+2 c_1) \} X_i +(\text{a polynomial in }X_2,\ldots,X_n)=0
\end{align*}
in $H^*(\xi)$, we obtain $a_1(a_i+2 c_i)=(s_i a_1 -a_i)(a_1+2 c_1)$ for $i=2,\ldots,n$.
Note that, by the assumption of linear independence, $a_1+2c_1=0$ if $a_1=0$ and vice versa.
If $a_1$ and $a_1+2 c_1$ are non-zero, denoting by $k$ the greatest common divisor of $a_1$ and $a_1+2 c_1$, we see that $a_1/k$ and $(a_1+2 c_1)/k$ divide $s_i a_1 -a_i$ and $a_i +2 c_i$ ($i=2,\ldots,n$) respectively.
By the assumption (b), we obtain $a_1/k=\pm 1$ and $(a_1+2 c_1)/k=\pm 1$,
namely, $a_1+c_1=0$ or $c_1=0$.
This holds also in the case $a_1=a_1+2 c_1=0$.

Similarly, we have $b_1(a_i+b_i+2 c_i)=(s_i b_1-b_i)(a_1+b_1+2 c_1)$ and $c_1(b_i+ c_i)=(s_i c_1-c_i)(b_1+ c_1)$ for $i=2,\ldots,n$ from $\varphi(Y(X+Y+2 Z))=0$ and $\varphi(Z(Y+Z))=0$ respectively,
and then obtain $a_1+2 c_1=0$ or $a_1+2 b_1+2 c_1=0$, and, $b_1=0$ or $b_1+2 c_1=0$ in the same way as above.
We solve these equations to see $a_1=b_1=c_1=0$.
\end{proof}

\begin{rem}\label{rem:remark on proof over I^3}
By Remark \ref{rem:strong cohomological rigidity of M_1}, Lemma \ref{lem:M(chi_5),M(chi_6) in M_2}, Lemma \ref{lem:M_1,M_3/M_2,M_4}, Lemma \ref{lem:M_1/M_3} and Lemma \ref{lem:M_2/chi_1},
to show the strong cohomological rigidity of $\calM_{I^3}$,
we only have to show that of $\calM_2$, $\calM_3$, and $\{ M(\chi_1) \}$ respectively.
\end{rem}


\begin{lem}\label{filtration lemma for M_2}
Let $\xi_0$ be a characteristic square on $I^{n-1}$ ($n\geq 2$), $\xi$ be a $(1,\xi_0)$-type characteristic square on $I^n$, and $\varphi\co \bbZ[X_1,X_2]\to \bbZ[X_1,\ldots,X_n]$ be a graded ring monomorphism which maps $X_1$ and $X_2$ to $\sum_{i=1}^n a_i X_i$ and $\sum_{i=1}^n b_i X_i$ respectively.
Moreover, we assume the following:
\begin{enumerate}
 \item[(a)] $\varphi$ maps $\calI_{\kappa^2}$ into $\calI_{\xi}$;
 \item[(b)] 
for any prime $p$ the $\bmod p$ reductions of $(a_1,\ldots,a_n)$ and $(b_1,\ldots,b_n)$ 
are linearly independent.
\end{enumerate}
Then we have $a_1=b_1=0$.
\end{lem}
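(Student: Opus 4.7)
The plan is to adapt the modular reduction technique from the proof of Lemma \ref{lem:sublemma, filtration lemma 2-2}, now with only a single ``distinguished row'' rather than two. Denote the first row of $\xi$ by $(1, s_2, \ldots, s_n)$ and let $W$ be the submodule of $\bbZ[X_1, \ldots, X_n]$ in polynomial degree four spanned by $\{X_p X_q : p, q \geq 2\}$. Since $\xi$ is $(1, \xi_0)$-type, the generators $u_i(\xi) X_i$ of $\calI_\xi$ with $i \geq 2$ involve only $X_2, \ldots, X_n$ and therefore lie in $W$; hence modulo $W$, the ideal $\calI_\xi$ is spanned by the single element $u_1(\xi) X_1 = X_1(X_1 + s_2 X_2 + \cdots + s_n X_n)$. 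I will apply $\varphi$ to the generators $X_1(X_1 + 2X_2)$ and $X_2(X_1 + X_2)$ of $\calI_{\kappa^2}$ and compare coefficients of $X_1 X_j$ ($j \geq 2$) modulo $W$; this produces the two families of equations
\[ a_1(a_j + 2b_j) = (s_j a_1 - a_j)(a_1 + 2b_1), \qquad b_1(a_j + b_j) = (s_j b_1 - b_j)(a_1 + b_1), \]
which I label $(\ast)$ and $(\ast\ast)$ respectively.

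Assuming for contradiction that $(a_1, b_1) \neq (0, 0)$, I would first argue that both $a_1$ and $b_1$ are non-zero: if $a_1 = 0$ and $b_1 \neq 0$, then $(\ast)$ forces $a_j = 0$ for $j \geq 2$, giving $(a_1, \ldots, a_n) = 0$ and contradicting hypothesis (b), and the symmetric argument via $(\ast\ast)$ rules out $a_1 \neq 0$, $b_1 = 0$. Similarly, $a_1 + 2b_1 = 0$ would force $a_j + 2b_j = 0$ for all $j$, hence $(a_i) + 2(b_i) = 0$ in $\bbZ^n$, contradicting (b) modulo every prime (with care at $p = 2$, where the relation collapses to $(a_i) \equiv 0 \pmod{2}$, still a linear dependence); the analogous argument using $(\ast\ast)$ shows $a_1 + b_1 \neq 0$.

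The heart of the proof will be a gcd analysis of $(\ast)$. Setting $k := \gcd(a_1, a_1 + 2b_1)$ and writing $a_1 = k\alpha$, $a_1 + 2b_1 = k\beta$ with $\gcd(\alpha, \beta) = 1$, I would rewrite $(\ast)$ as $\alpha(a_j + 2b_j) = (s_j k \alpha - a_j)\beta$. From this, $\alpha \mid a_j$ and $\beta \mid a_j + 2b_j$ for $j \geq 2$; combined with the trivial cases at $j = 1$, both divisibilities hold for every $j \geq 1$, and hypothesis (b) then forces $\alpha = \beta = \pm 1$. Hence $|a_1 + 2b_1| = |a_1|$, and since $b_1 \neq 0$ this yields $b_1 = -a_1$. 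Applying the analogous analysis to $(\ast\ast)$ with $l := \gcd(b_1, a_1 + b_1)$ yields $a_1 = -2 b_1$. Substituting gives $a_1 = 2 a_1$, so $a_1 = 0$, contradicting $a_1 \neq 0$; we conclude $a_1 = b_1 = 0$.

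The main obstacle I foresee is the careful handling of hypothesis (b) at the prime $p = 2$: several intermediate congruences such as $(a_i) + 2(b_i) \equiv 0 \pmod{2}$ degenerate to $(a_i) \equiv 0 \pmod{2}$, which nevertheless remains a linear dependence mod $2$, so the intended contradictions still go through. Beyond this bookkeeping, the argument is a streamlined version of the proof of Lemma \ref{lem:sublemma, filtration lemma 2-2}, simplified by the fact that modulo $W$ there is only one relation from $\calI_\xi$ to track.
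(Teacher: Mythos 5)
Your proposal is correct and follows essentially the same route as the paper: extract the two families of equations by comparing coefficients of $X_1X_j$ modulo $W$, then run the gcd--divisibility argument against hypothesis (b) (exactly the analysis the paper imports from Lemma \ref{lem:M_2/chi_1}). The only cosmetic difference is the endgame: the paper applies the gcd analysis only to the first relation, obtaining $b_1=0$ or $a_1+b_1=0$, and then kills the remaining case by a direct coefficient comparison in $\varphi(X_2(X_1+X_2))$, whereas you run the gcd analysis symmetrically on both relations and intersect the two resulting linear conditions --- both versions are valid.
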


\begin{proof}
Denote the first row of $\xi$ by $(1,s_2,s_3,\ldots,s_n)$.
Since $\xi$ is $(1,\xi_0)$-type and
\begin{align*}
& \varphi(X_1(X_1+2 X_2)) = \left(\sum_{i=1}^n a_i X_i\right)\left\{ \sum_{i=1}^n (a_i+2 b_i) X_i \right\} \\
& = X_1 \left\{ a_1(a_1+2 b_1) X_1 + \sum_{i=2}^n \{a_1(a_i+2 b_i)+a_i(a_1+2 b_1)\} X_i \right\}+(\text{a polynomial in }X_2,\ldots,X_n) \\
& = X_1 \sum_{i=2}^n \{ a_1(a_i+2 b_i)+(a_i-s_i a_1)(a_1+2 b_1) \} X_i  +(\text{a polynomial in }X_2,\ldots,X_n)=0
\end{align*}
in $H^*(\xi)$, we obtain $a_1(a_i+2 b_i)=(s_i a_1 -a_i)(a_1+2 b_1)$ for $i=2,\ldots,n$.
In the same way as the proof of Lemma \ref{lem:M_2/chi_1}, we obtain $a_1+b_1=0$ or $b_1=0$, which implies that the coefficient of $X_1$ in $\varphi(X_1+X_2)$ or $\varphi(X_2)$ is zero.
Then we easily see that $\varphi(X_2(X_1+X_2))\neq 0$ in $H^*(\xi)$ unless both $b_1$ and $a_1+b_1$ are zero.
\end{proof}

\begin{lem}\label{lem:strong cohomological rigidity of M_2}
$\calM_2$ is strongly cohomologically rigid.
\end{lem}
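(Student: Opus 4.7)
The plan is to show that any graded ring isomorphism $\varphi\co H^*(\xi_{s,t})\to H^*(\xi_{s',t'})$ between manifolds in $\calM_2$ is induced by a homeomorphism, by first normalizing $\varphi$ via the quasitoric bundle structure and then invoking Jupp's theorem.

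To start, I will apply Lemma \ref{filtration lemma for M_2} to the ring monomorphism $\bbZ[X_1,X_2]\to\bbZ[X,Y,Z]$ defined by $X_1\mapsto\varphi(Y)$, $X_2\mapsto\varphi(Z)$. The hypotheses are satisfied because $\xi_{s',t'}$ is $(1,\kappa^2)$-type and because any two rows of a matrix in $GL_3(\bbZ)$ are $\bmod p$ linearly independent for every prime $p$ (otherwise every $2\times 2$ minor they form, and hence $\det$, would be divisible by $p$). The lemma then forces the $X$-coefficients of $\varphi(Y)$ and $\varphi(Z)$ to vanish, so $\varphi$ preserves the ideal $(X)$ and descends to a graded ring automorphism $\tilde\varphi$ of $H^*(\kappa^2)$. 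By Lemma \ref{realization of Aut(kappa^2)} I pick a weakly equivariant self-homeomorphism $h\co M(\kappa^2)\to M(\kappa^2)$ realizing $\tilde\varphi^{-1}$ and feed it into Lemma \ref{wehomeo on each fiber} applied to the tower $M(\xi_{s',t'})\to M(\kappa^2)$, with $h$ on the base and the identity on the $\cp{1}$-fiber; this produces some $\xi_{s'',t''}\in A_2$ and a weakly equivariant homeomorphism $g\co M(\xi_{s',t'})\to M(\xi_{s'',t''})$ such that $g^*\circ\varphi$ restricts to the identity modulo $(X)$. After this reduction I may assume $\varphi(Y)=Y$, $\varphi(Z)=Z$ and $\varphi(X)=\alpha X+\beta Y+\gamma Z$ with $\alpha\in\{\pm 1\}$ and $\beta,\gamma\in\bbZ$.

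Next I exploit that $\varphi$ is a ring homomorphism: expanding $\varphi(X(X+sY+tZ))=0$ in $H^*(\xi_{s'',t''})$ using the relations $X^2=-s''XY-t''XZ$, $Y^2=-2YZ$, $Z^2=-YZ$ and comparing coefficients in the basis $\{XY,XZ,YZ\}$ of $H^4$, I obtain $s''=\alpha(s+2\beta)$, $t''=\alpha(t+2\gamma)$, together with the numerical identity
\begin{equation*}
-2\beta^2-2\beta s+2\beta\gamma+\beta t+\gamma s-\gamma^2-\gamma t=0. \tag{$\ast$}
\end{equation*}
Using Theorem \ref{thm:characteristic classes of a quasitoric manifold} I compute $w_2(\xi_{s,t})\equiv(s+1)Y+tZ\pmod 2$ and $p_1(\xi_{s,t})=(2s^2-2st+t^2+6)YZ$, with the analogous formulas for $\xi_{s'',t''}$. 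Since $s''\equiv s$ and $t''\equiv t$ modulo $2$, $\varphi$ automatically preserves $w_2$; a direct substitution shows that $p_1(\xi_{s'',t''})-\varphi(p_1(\xi_{s,t}))$ vanishes precisely when $(\ast)$ holds. Hence $\varphi$ preserves both $w_2$ and $p_1$.

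Finally, each $M(\xi_{s,t})$ is a closed, simply connected, smooth $6$-manifold with torsion-free cohomology, so Jupp's theorem (Theorem \ref{thm:classification theorem of 6-manifolds}) yields a homeomorphism $f\co M(\xi_{s'',t''})\to M(\xi_{s,t})$ realizing $g^*\circ\varphi$; composing with $g^{-1}$ gives the desired homeomorphism $M(\xi_{s',t'})\to M(\xi_{s,t})$ inducing $\varphi$. The main obstacle lies in the middle step: recognizing that the algebraic consistency relation $(\ast)$ forced by $\varphi$ being a ring homomorphism coincides exactly with the $p_1$-preservation condition. Once this polynomial identity is in hand, the strong cohomological rigidity of $\calM_2$ reduces to a single appeal to Jupp's theorem.
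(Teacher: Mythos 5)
Your proposal is correct and follows essentially the same route as the paper: reduce $\varphi$ to block-triangular form via Lemma \ref{filtration lemma for M_2}, normalize the $H^*(\kappa^2)$-block using Lemma \ref{realization of Aut(kappa^2)}, extract the coefficient constraints from $\varphi(X(X+sY+tZ))=0$, and check that these force preservation of $w_2$ and $p_1$ before invoking Jupp's theorem. (Two cosmetic slips that do not affect the argument: the vanishing of the $X$-coefficients of $\varphi(Y)$ and $\varphi(Z)$ means $\varphi$ preserves the ideal $(Y,Z)$ and the subring generated by $Y,Z$, not the ideal $(X)$; and the composite you want is $(g^{-1})^*\circ\varphi$ rather than $g^*\circ\varphi$.)
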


\begin{proof}
Let $\varphi\co H^*(\xi_{s,t})\to H^*(\xi_{x,y})$ be a graded ring isomorphism.
By Lemma \ref{filtration lemma for M_2},
$$\varphi\left(\begin{array}{c}X\\Y\\Z\end{array}\right)=\left(
\begin{array}{c | c c}
a &b &c \\ \hline
0 & \multicolumn{2}{c}{\raisebox{-11pt}[0pt][0pt]{\LARGE $\theta$}} \\
0 &
\end{array}
\right)\left(\begin{array}{c}X\\Y\\Z\end{array}\right)$$
where $a=\pm 1$ and $\theta$ is an automorphism of $H^*(\kappa^2)$.
By Lemma \ref{realization of Aut(kappa^2)}, $\theta$ can be realized as a weakly equivariant self-homeomorphism of $M(\kappa^2)$, and therefore we can construct a weakly equivariant homeomorphism $f$ from $M(\xi_{x,y})$ to some $M(\xi_{x',y'})$ such that
$$f^*\left(\begin{array}{c}X\\Y\\Z\end{array}\right)=\left(
\begin{array}{c | c c}
a &0 &0 \\ \hline
0 & \multicolumn{2}{c}{\raisebox{-11pt}[0pt][0pt]{\LARGE $\theta$}} \\
0 &
\end{array}
\right)\left(\begin{array}{c}X\\Y\\Z\end{array}\right)$$
in the similar way to the proof of Corollary \ref{1-I}.
Thus we see that we can assume $a=1$ and $\theta=E_2$.
Since $\varphi$ maps $\calI_{\xi_{s,t}}$ into $\calI_{\xi_{x,y}}$,
\begin{align*}
& \varphi(X(X+s Y+t Z)) = (X+ b Y+c Z)\{X+(s+b)Y+(t+c)Z\} \\
& = X \{ (s-x+2 b)Y+(t-y+2 c)Z \}+\{ -2 b(s+b) -c(t+c) +b(t+c) +c(s+b) \} YZ \\
& =0
\end{align*}
in $H^*(\xi_{x,y})$.
Thus we obtain
$$b=\frac{x-s}{2},\,c=\frac{y-t}{2},\,(s-t)^2+s^2=(x-y)^2+x^2.$$
In particular, $s\equiv x$ and $t\equiv y$ modulo 2.
Then we have
$$ \varphi(w_2(\xi_{s,t}))=\varphi((s+1) Y+t Z))=(s+1)Y+t Z=w_2(\xi_{x,y}) $$
in $H^*(\xi_{x,y};\bbZ/2)$.
Similarly, since $\varphi(2 X+sY+t Z)-(2 X+x Y+y Z)=0$, we have
\begin{align*}
& p_1(\xi_{x,y}) - \varphi(p_1(\xi_{s,t})) = \varphi(X)^2+\varphi(X+sY+t Z)^2-X^2-(X+x Y+y Z)^2 \\
& = \varphi(2 X+sY+t Z)^2-(2 X+x Y+y Z)^2 \\
& = \{ \varphi(2 X+sY+t Z)+(2 X+x Y+y Z) \} \{ \varphi(2 X+sY+t Z)-(2 X+x Y+y Z) \} \\
& = 0
\end{align*}
in $H^*(\xi_{x,y})$.
Thus we obtain the lemma by Theorem \ref{thm:classification theorem of 6-manifolds}.
\end{proof}

\begin{lem}\label{lem:strong cohomological rigidity of M_3}
Any graded ring isomorphism between the cohomology rings of two members of $\phi(A_3)$ is induced by a weakly equivariant homeomorphism.
In particular, $\calM_3$ is strongly cohomologically rigid.
\end{lem}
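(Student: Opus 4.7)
The plan is to emulate the proof of Lemma \ref{lem:strong cohomological rigidity of M_2} but to push the normalization further, so that $\varphi$ is reduced to the identity; at that point no appeal to Jupp's theorem is needed, and the original $\varphi$ is automatically induced by the composition of the weakly equivariant homeomorphisms accumulated during normalization. The essential new input is Remark \ref{rem:nil-square element}: since $\xi^{s,t}$ and $\xi^{x,y}$ both lie in $\Xi(\bbZ^2,\bbZ^2,C_2)$, the degree-two elements squaring to zero form $\bbZ X$ in each cohomology ring, so any graded ring isomorphism $\varphi \co H^*(\xi^{s,t}) \to H^*(\xi^{x,y})$ must satisfy $\varphi(X) = \alpha X$ with $\alpha \in \{\pm 1\}$.

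To normalize $\alpha$ to $1$ I would invoke the pair $(\nu_1, \mu_1) \in GL_3(\bbZ) \times R(I^3)$: a short computation gives $(\nu_1, \mu_1) \cdot (E_3\ \xi^{x,y}) = (E_3\ \xi^{-x,-y})$, and Proposition \ref{realization of rep} together with Corollary \ref{induced by rep} then yields a weakly equivariant homeomorphism $g \co M(\xi^{x,y}) \to M(\xi^{-x,-y})$ whose cohomology pullback sends $X$ to $-X$ while fixing $Y$ and $Z$. Replacing $\varphi$ by $(g^{-1})^* \circ \varphi$ when $\alpha = -1$ reduces to the case $\varphi(X) = X$, and $\varphi$ then descends to an automorphism $\bar\varphi$ of $H^*(\xi^{s,t})/(X) \cong H^*(\kappa^2)$, which by Lemma \ref{Aut(kappa^2)} lies in an explicit group of order eight. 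For each generator of this group I would construct a weakly equivariant homeomorphism between suitable members of $\phi(A_3)$ realizing the corresponding action on the $\kappa^2$-block: the appropriate $(\psi, \rho) \in GL_3(\bbZ) \times R(I^3)$ is obtained by taking $\rho$ to be the analog, on the indices $\{2,3,5,6\}$, of the $\tau_i \in R(I^2)$ appearing in Lemma \ref{realization of Aut(kappa^2)}, and choosing $\psi$ so that $\psi \cdot (E_3\ \xi^{s,t}) \cdot \iota(\rho)$ again has left half $E_3$ and right half in $A_3$; a further $(\nu_1, \mu_1)$-sign correction is applied whenever needed to preserve $\alpha = 1$. Post-composing $\varphi$ with the cohomology maps of the inverses of these homeomorphisms reduces to $\bar\varphi = \id$.

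At this stage $\varphi$ has the form $\varphi(X) = X$, $\varphi(Y) = Y + aX$, $\varphi(Z) = Z + bX$ for integers $a, b$, and the target is $H^*(\xi^{p,q})$ for some $(p,q)$. Substituting into the source relations $\varphi(Y)\varphi(sX+Y+2Z) = 0$ and $\varphi(Z)\varphi(tX+Y+Z) = 0$ inside $H^*(\xi^{p,q})$, and using $X^2 = 0$, $Y^2 + 2YZ = -pXY$, $Z^2 + YZ = -qXZ$ together with the linear independence of $XY$ and $XZ$ in $H^4(\xi^{p,q})$, yields the system $2a = 0$, $s + 2a + 2b = p$, $b = 0$, $t + 2b = q$. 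Consequently $a = b = 0$ and $(p,q) = (s,t)$, so the normalized $\varphi$ equals the identity and is induced by $\id_{M(\xi^{s,t})}$; unwinding the normalization shows that the original $\varphi$ is induced by the composition of the accumulated weakly equivariant homeomorphisms. Strong cohomological rigidity of $\calM_3$ follows at once, since any weakly equivariant homeomorphism is in particular a homeomorphism.

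The principal obstacle is in the second paragraph: producing, for each of the eight elements of $\Aut(H^*(\kappa^2))$, an explicit weakly equivariant homeomorphism between members of $\phi(A_3)$ realizing it on the $\kappa^2$-block, and tracking how the parameters $(s,t)$ transform. This requires a careful case-by-case construction of $(\psi, \rho)$ and verification that the resulting characteristic matrix can be brought back to the form $(E_3\ \xi^{s'',t''})$ with $\xi^{s'',t''} \in A_3$. The other steps (isolating $X$ via Remark \ref{rem:nil-square element}, the sign normalization for $\alpha$, and the final algebraic reduction) are short and direct consequences of results already at our disposal.
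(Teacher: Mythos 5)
Your overall strategy is the same as the paper's: isolate the square-zero line in degree two, normalize $\varphi$ by weakly equivariant homeomorphisms until it is block-unitriangular with trivial $\kappa^2$-block, and then kill the remaining off-diagonal entries by a direct computation with the relations. The closing algebra in your third paragraph is correct (it is a special case of Lemma \ref{lemma for 1-II} with $\xi_0=(1)$, read with the roles of the blocks transposed), and the sign normalization via $(\nu_1,\mu_1)$ is fine.

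However, the step you yourself flag as the ``principal obstacle'' --- realizing each element of $\Aut(H^*(\kappa^2))$ on the $(Y,Z)$-block by a weakly equivariant homeomorphism between members of $\phi(A_3)$, while tracking how $(s,t)$ transforms --- is left entirely unexecuted, and it is the only part of the argument that is not a formal consequence of results already proved. As written, your proof is therefore incomplete. The paper dissolves this obstacle with one preliminary move that you are missing: since $(\delta_3\delta_2,\sigma_3\circ\sigma_2)$ carries $(E_3\,\xi^{s,t})$ to a $(\kappa^2,1)$-type characteristic square (i.e.\ it transposes the $\kappa^2$-block to the upper-left corner), one may assume from the outset that both source and target are $(\kappa^2,1)$-type. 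After that, the square-zero line is $\bbZ Z$, the realization of $\Aut(H^*(\kappa^2))$ on the leading block is exactly Lemma \ref{realization of Aut(kappa^2)} combined with the product construction $(\psi_1,\rho_1)\times(\psi_2,\rho_2)$ already used in Corollary \ref{1-I}, and the final step is literally Lemma \ref{lemma for 1-II}. I recommend you either insert this conjugation at the start (which makes your second paragraph a citation rather than a construction), or actually carry out the case-by-case construction of the pairs $(\psi,\rho)$ on $A_3$; without one of these, the lemma is not proved.
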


\begin{proof}
Note that $(\psi_3\psi_2,\sigma_3\circ\sigma_2)\cdot\xi^{s,t}$ is a $(\kappa^2,1)$-type characteristic square.
Let $\xi$ and $\xi'$ be two $(\kappa^2,1)$-type characteristic squares and  $\varphi\co H^*(\xi)\to H^*(\xi')$ be a graded ring isomorphism.
Since $\varphi$ preserves the elements of degree 2 of which the squares are zero, we have
$$\varphi\left(\begin{array}{c}X\\Y\\Z\end{array}\right)=\left(
\begin{array}{c c | c}
\multicolumn{2}{c|}{\raisebox{-11pt}[0pt][0pt]{\LARGE $\theta$}} & a\\
&& b \\ \hline
0 & 0 & c
\end{array}
\right)\left(\begin{array}{c}X\\Y\\Z\end{array}\right).$$
As in the proof of the previous lemma, we can assume $c=1$ and $\theta=E_2$.
Then we have $\xi=\xi'$ and $\varphi=(\id_{M(\xi)})^*$ by Lemma \ref{lemma for 1-II}.
\end{proof}



\begin{lem}\label{lem:automorphisms of chi_1}
Let $\varphi$ be a graded ring automorphism of $H^*(\chi_1)$.
Then $\varphi= \pm \id $.
\end{lem}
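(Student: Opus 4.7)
The plan is to write the matrix of $\varphi$ with respect to the ordered basis $X, Y, Z$ of $H^2(\chi_1)$, pin down its mod-$2$ reduction via an intrinsic characterization of the class $X \bmod 2$, and then extract enough integer constraints from the three defining relations to force $\varphi = \pm\id$.

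First, I will show that $\varphi \equiv \id \pmod 2$. In $H^*(\chi_1;\bbZ/2) = \bbF_2[X,Y,Z]/(X^2, XY+Y^2, YZ+Z^2)$, the Frobenius on $H^2$ is the linear map $pX+qY+rZ \mapsto qXY+rYZ$, so $\{W \in H^2(\chi_1;\bbZ/2) \mid W^2=0\} = \bbF_2\cdot X$. Being intrinsically defined, this line is preserved by $\varphi$, and hence $\varphi(X) \equiv X \bmod 2$. Writing $\varphi(Y) \equiv dX+eY+fZ$ and $\varphi(Z) \equiv gX+hY+iZ$ modulo $2$, and imposing that $\varphi$ respects the two remaining defining relations (now $Y(X+Y)=0$ and $Z(Y+Z)=0$), a short case analysis together with the invertibility of $\varphi \bmod 2$ forces $d = f = g = h = 0$ and $e = i = 1$ in $\bbF_2$. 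Thus the matrix of $\varphi$ reduces to $I$ modulo $2$.

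Write therefore $\varphi = I + 2\Phi$ for a linear map $\Phi \colon H^2(\chi_1) \to H^2(\chi_1)$. Substituting into $\varphi(X)(\varphi(X) + 2\varphi(Z)) = 0$ and using $X(X+2Z) = 0$ in $H^*(\chi_1)$, the leading terms cancel; after collecting and using the identities $X^2 = -2XZ$, $Y^2 = -XY-2YZ$, $Z^2 = -YZ$, the residue lies in $4 \cdot H^4(\chi_1)$, and since $H^4(\chi_1)$ is torsion-free (with basis $XY, XZ, YZ$) we may divide through by $4$ to obtain three integer equations in the entries of $\Phi$. Repeating this for the other two relations $\varphi(Y)(\varphi(X)+\varphi(Y)+2\varphi(Z)) = 0$ and $\varphi(Z)(\varphi(Y)+\varphi(Z)) = 0$ gives a total of nine polynomial equations. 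Their mod-$2$ reductions furnish further congruences, and I expect these to imply that every off-diagonal entry of $\Phi$ is even, i.e.\ every off-diagonal entry of the original matrix of $\varphi$ is divisible by $4$.

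Iterating this descent --- assume divisibility by $2^k$, substitute, divide by the appropriate power of $2$, and reduce modulo $2$ to upgrade to $2^{k+1}$ --- all off-diagonal entries of $\varphi$ must vanish, since an integer divisible by every power of $2$ is zero. Hence $\varphi$ is diagonal, say $\varphi = \operatorname{diag}(a, e, i)$ with $a, e, i$ odd. Plugging this diagonal form into the three defining relations quickly collapses to $a = e = i$, and the determinant condition $\det\varphi = aei = \pm 1$ forces $a = \pm 1$, whence $\varphi = \pm \id$. The chief technical obstacle is the bookkeeping of the nine polynomial identities in the iteration step: the critical feature making it work is precisely that $\varphi \equiv \id \bmod 2$, so each equation carries a common factor of $2$ that can be stripped away before extracting the next mod-$2$ constraint.
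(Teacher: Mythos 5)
There are two genuine gaps here. First, your mod-$2$ step overclaims. The relations $X^2=0$, $Y(X+Y)=0$, $Z(Y+Z)=0$ in $H^*(\chi_1;\bbZ/2)$, together with invertibility, do force $d=f=g=0$ and $e=i=1$, but they do \emph{not} force $h=0$: the substitution $X\mapsto X$, $Y\mapsto Y$, $Z\mapsto Y+Z$ kills all three relations modulo $2$ (for the third one, $(Y+Z)(2Y+Z)\equiv (Y+Z)Z=YZ+Z^2=2YZ\equiv 0$), so the coefficient of $Y$ in $\varphi(Z)$ is left undetermined modulo $2$. This is exactly the entry (called $b_3$ in the paper's notation) for which the paper needs a separate \emph{integral} argument: one takes the first of the nine integer equations, $(a_1-b_1)(b_1+2b_3)=-b_1(a_1+2a_3)$, uses that $b_1$ is even to divide through by $2$, and only then reduces modulo $2$ to get $b_3\equiv 0$. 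Without some such step you are not entitled to write $\varphi=I+2\Phi$, so the descent cannot even start.

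Second, and more seriously, the engine of your argument --- ``I expect these to imply that every off-diagonal entry of $\Phi$ is even'' --- is a conjecture, not a proof; the inductive step of the descent is never verified, not even once. Quadratic systems do not automatically support such descents (compare $x^2=1$: the solution $x=-1$ agrees with $1$ modulo $2$ but not modulo $4$), so this step is precisely where the content of the lemma lives, and you would also have to explain why the diagonal entries, which are only constrained to be odd, do not obstruct the iteration. The paper's route is quite different and fully explicit: after the mod-$2$ reduction it proves that the odd quantities $c_2-b_2$, $c_3-b_3$, $c_3-2b_3$ have no odd prime divisor, by checking that the nine equations together with $\det A=\pm1$ admit no solution modulo any odd prime $p$ in which one of these vanishes; hence each equals $\pm1$, and the system can then be solved directly over $\bbZ$. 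Your purely $2$-adic scheme discards this odd-prime information entirely, so if you want to pursue it you must actually exhibit the iteration and show it closes up, rather than asserting that it should.
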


\begin{proof}
Take $A\in GL_3(\bbZ)$ so that
$$\varphi\left(\begin{array}{c}X\\Y\\Z\end{array}\right)
=A\left(\begin{array}{c}X\\Y\\Z\end{array}\right)$$
and denote the $i$-th row of $A$ by $(a_i,b_i,c_i)$ ($i=1,2,3$).
Then $A$ satisfies the following equations.
\begin{align} \label{eq:chi_1,1}
(a_1-b_1)(b_1+2 b_3) &= -b_1(a_1+2 a_3) \\
(c_1-2b_1)(b_1+2 b_3) &= (c_1-b_1)(c_1+2 c_3) \\
(c_1-2a_1)(a_1+2a_3) &= -a_1(c_1+2c_3) \\
(a_2-b_2)(b_1+b_2+2 b_3) &= -b_2(a_1+a_2+2 a_3) \\
(c_2-2b_2)(b_1+b_2+2 b_3) &= (c_2-b_2)(c_1+c_2+2 c_3) \\
(c_2-2a_2)(a_1+a_2+2a_3) &= -a_2(c_1+c_2+2c_3) \\
(a_3-b_3)(b_2+b_3) &= -b_3(a_2+a_3) \\
(c_3-2b_3)(b_2+b_3) &= (c_3-b_3)(c_2+c_3) \\ \label{eq:chi_1,9}
(c_3-2a_3)(a_2+a_3) &= -a_3(c_2+c_3)
\end{align}
By solving these equations modulo $2$, we obtain
$$A \equiv \left(
\begin{array}{ccc}
1 &0 &0  \\
0 &1 &0  \\
0 &b_3 &1 
\end{array}
\right) \bmod 2.
$$
Since $a_1$ is odd and $b_1$ is even, we have 
$$ \frac{b_1}{2}+b_3\equiv -\frac{b_1}{2} \bmod 2$$
from the equation (\ref{eq:chi_1,1}), which implies $b_3\equiv 0\bmod 2$.

Moreover, we obtain $c_2-b_2,\,c_3-b_3,\,c_3-2b_3=\pm 1$ as follows.
Note that $c_2-b_2$, $c_3-b_3$, and $c_3-2b_3$ are odd.
Let $p$ be an odd prime, and consider the equations $(\ref{eq:chi_1,1}),\ldots,(\ref{eq:chi_1,9})$ and $\det A = \pm 1$ modulo $p$.
Then, by a direct calculation, one can show that there exists no solution with 
$c_2-b_2\equiv 0$, $c_3-b_3\equiv 0$, or $c_3-2b_3\equiv 0$ modulo $p$.
This implies that no prime divides them, i.e. they are equal to $\pm 1$ respectively.
Then we can solve $(\ref{eq:chi_1,1}),\ldots,(\ref{eq:chi_1,9})$ straightforwardly and obtain the lemma.
\end{proof}


The following theorem is immediate from Remark \ref{rem:remark on proof over I^3}, Lemma \ref{lem:strong cohomological rigidity of M_2},
Lemma \ref{lem:strong cohomological rigidity of M_3} and Lemma \ref{lem:automorphisms of chi_1}, which is a paraphrase of Theorem \ref{main thm 2}.

\begin{thm}
$\calM^{\rm homeo}_{I^3}$ is strongly cohomologically rigid.
\end{thm}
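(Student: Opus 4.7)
The plan is to consolidate the pieces already established in Sections 4 and 5 into one reasoning, with essentially no new analytic work required. Recall that by Lemma \ref{lem:summary up to w.e.homeo.} combined with Lemma \ref{lem:M(chi_5),M(chi_6) in M_2}, every quasitoric manifold over $I^3$ lies, up to homeomorphism, in $\calM_1\cup\calM_2\cup\calM_3\cup\{[M(\chi_1)]\}$. Thus, given a graded ring isomorphism $\varphi\co H^*(M;\bbZ)\to H^*(M';\bbZ)$ between two quasitoric manifolds $M,M'$ over $I^3$, my first task is to show that $M$ and $M'$ necessarily belong to the same member of this decomposition. I would invoke three cohomological separation results in sequence: Lemma \ref{lem:M_1,M_3/M_2,M_4} detects whether the cohomology ring contains a nonzero degree-two square-zero element, separating $\calM_1\cup\calM_3$ from $\calM_2\cup\{[M(\chi_1)]\}$; Lemma \ref{lem:M_1/M_3} then separates $\calM_1$ from $\calM_3$; and Lemma \ref{lem:M_2/chi_1} separates $\calM_2$ from $\{[M(\chi_1)]\}$. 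Since each of these cohomological obstructions is preserved by $\varphi$, we conclude that $M$ and $M'$ fall into a common class.

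Once $M$ and $M'$ are placed in a common class, I realize $\varphi$ by an honest homeomorphism using the within-class strong cohomological rigidity that has already been proved. If $M,M'\in\calM_1$, this follows from Choi's strong cohomological rigidity for $3$-stage Bott manifolds via Remark \ref{rem:strong cohomological rigidity of M_1}. If they both lie in $\calM_2$ or both in $\calM_3$, I apply Lemma \ref{lem:strong cohomological rigidity of M_2} or Lemma \ref{lem:strong cohomological rigidity of M_3} respectively. Finally, if both equal $[M(\chi_1)]$, Lemma \ref{lem:automorphisms of chi_1} forces $\varphi=\pm\id$; the identity is trivially induced by $\id_{M(\chi_1)}$, while $-\id$ is induced by the weakly equivariant self-homeomorphism of $M(\chi_1)$ whose representation is $(-E_3,\rho)\in GL_3(\bbZ)\times R(I^3)$, where $\rho\in R_6$ sends every $i$ to $-i$. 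Such a self-homeomorphism exists by Proposition \ref{realization of rep}, since $\iota(\rho)=-E_6$ yields $(-E_3)\cdot(E_3\,\chi_1)\cdot\iota(\rho)=(E_3\,\chi_1)$, and Corollary \ref{induced by rep} shows that its action on the generators $X_1,X_2,X_3\in H^*(\chi_1)$ is multiplication by $-1$.

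No step in this consolidation is itself a serious obstacle; the theorem is essentially a corollary of the preceding machinery. The real difficulty was concentrated in the earlier lemmas, most notably the divisibility analysis underlying Lemma \ref{filtration lemma for M_2} that powers Lemma \ref{lem:strong cohomological rigidity of M_2}, the $\mathrm{mod}\,p$ case analysis in Lemma \ref{lem:automorphisms of chi_1}, and the systematic invocation of Jupp's theorem (Theorem \ref{thm:classification theorem of 6-manifolds}) inside the within-class rigidity proofs.
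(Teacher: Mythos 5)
Your proposal is correct and follows essentially the same route as the paper, which likewise deduces the theorem directly from Remark \ref{rem:remark on proof over I^3} (i.e.\ the separation lemmas) together with Lemma \ref{lem:strong cohomological rigidity of M_2}, Lemma \ref{lem:strong cohomological rigidity of M_3} and Lemma \ref{lem:automorphisms of chi_1}. Your explicit realization of $-\id$ on $H^*(\chi_1)$ via the pair $(-E_3,\rho)$ with $\iota(\rho)=-E_6$ is a correct filling-in of a detail the paper leaves implicit.
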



\end{document}